\newcounter{mtheorem}
\newtheorem{theorem}{Theorem}[section]
\newtheorem{lemma}[theorem]{Lemma}
\newtheorem{prop}[theorem]{Proposition}
\newtheorem{corollary}[theorem]{Corollary}
\theoremstyle{definition}
\newtheorem{definition}[theorem]{Definition}
\newtheorem{example}[theorem]{Example}
\theoremstyle{remark}
\newtheorem{remark}[theorem]{Remark}
\numberwithin{equation}{section}
\newcommand{\E}{\mathcal{E}}
\newcommand{\cone}{\mathcal{C}}
\newcommand{\unitary}[1]{\textrm{U({#1})}}
\newcommand{\sunitary}[1]{\textrm{SU({#1})}}
\newcommand{\C}{\mathbb{C}}
\newcommand{\R}{\mathbb{R}}
\newcommand{\N}{\mathbb{N}}
\newcommand{\Sph}{\mathbb{S}}
\newcommand{\Imag}{\operatorname{Im}}
\newcommand{\Real}{\operatorname{Re}}
\newcommand{\tnabla}{{\widetilde{\nabla}}}
\newcommand{\tg}{{\tilde{g}}}
\newcommand{\bt}{\boldsymbol{t}}
\title[SL conifolds, II]{Special Lagrangian conifolds, II: \linebreak Gluing constructions in $\C^m$}
\author[T.~Pacini]{Tommaso~Pacini}
\address{Scuola Normale Superiore, Pisa} \email{tommaso.pacini@sns.it}
\date{\today}
\begin{document}
\begin{abstract}
We prove two gluing theorems for special Lagrangian (SL) conifolds in $\C^m$. Conifolds are a key ingredient in the compactification problem for moduli spaces of compact SLs in Calabi-Yau manifolds. 

In particular, our theorems yield the first examples of smooth SL conifolds with 3 or more planar ends and the first (non-trivial) examples of SL conifolds which have a conical singularity but are not, globally, cones. We also obtain: (i) a desingularization procedure for transverse intersection and self-intersection points, using ``Lawlor necks''; (ii) a construction which completely desingularizes any SL conifold by replacing isolated conical singularities with non-compact asymptotically conical (AC) ends; (iii) a proof that there is no upper bound on the number of AC ends of a SL conifold; (iv) the possibility of replacing a given collection of conical singularities with a completely different collection of conical singularities and of AC ends. 

As a corollary of (i) we improve a result by Arezzo and Pacard \cite{arezzopacard} concerning minimal desingularizations of certain configurations of SL planes in $\C^m$, intersecting transversally. 
\end{abstract}
\maketitle
\tableofcontents

%%%%%%%%%%%%%%%%%%%%%%%%%%%%%%%%%%%%%%%%%%%%%%%%%%%%%%%%%%%%%%
%%%%%%%%%%%%%%%%%%%%%%%%%%%%%%%%%%%%%%%%%%%%%%%%%%%%%%%%%%%%%%

\section{Introduction}\label{s:intro}

Let $M$ be a Calabi-Yau (CY) manifold. Roughly speaking, a submanifold
$L\subset M$ is \textit{special Lagrangian} (SL) if it is both minimal and
Lagrangian with respect to the ambient Riemannian and symplectic structures. 

SLs are examples of ``calibrated submanifolds'' \cite{harveylawson} and are thus interesting from the point of view of Geometric Measure Theory. They also have other nice geometric properties, including smooth moduli spaces. Research in this field is largely guided by several conjectures relating SLs to Mirror Symmetry \cite{kontsevich}, \cite{syz} and to the search for invariants for CY manifolds \cite{joyce:3spheres}. 

The simplest example of a CY manifold is $\C^m$, endowed with its standard structures. Recall that $\C^m$ cannot admit compact minimal submanifolds. In this ambient space it is therefore necessary to study non-compact SLs. It is also important to be able to work with singular objects. The simplest most natural class of SLs in $\C^m$ is thus the class of SL \textit{conifolds}: submanifolds admitting both isolated ``conical singularities'' (CS) and non-compact  ``asymptotically conical'' (AC) ends: the former modelled on the ``tip'', the latter on the ``large end'', of SL cones. SL cones are, of course, the most basic example of conifolds. If a SL conifold is smooth then, by definition, it can have only AC ends: we will refer to it as an AC SL.

The main motivation for studying SL conifolds in $\C^m$ stems from the fact that, up to first order and in appropriate coordinate systems, any CY manifold $M$ is modelled on $\C^m$. Likewise, a SL submanifold $L$ in $M$ with an isolated conical singularity is locally modelled on a SL cone in $\C^m$. Assume there exists an AC SL $\hat{L}$ in $\C^m$, asymptotic to that cone. The work of Joyce \cite{joyce:III}, \cite{joyce:IV}, \cite{joyce:V} then shows, under appropriate assumptions, how to glue $\hat{L}$ into a neighbourhood of the singularity obtaining a 1-parameter family $L_t$ of smooth SLs in $M$ which converges to $L$ as $t\rightarrow 0$. Although this is an impressive result, there are two important limits to Joyce's work. The first is the fact that it applies only to compact SLs; this prevents us from applying his results to the case $M=\C^m$. The second is the lack of examples to which to apply his results. More specifically, (i) we have no way of producing SLs with isolated conical singularities in general CY manifolds, and (ii) we have very few examples of AC SLs in $\C^m$. We do instead have many examples of SL cones in $\C^m$: \cite{harveylawson}, \cite{cheng} and recent work \cite{carberrymc}, \cite{haskins}, \cite{haskinskapouleas}, \cite{joyce:symmetries} have produced many classes of examples of SL cones. Some of these are known however not to admit AC SL desingularizations \cite{haskinspacini}, and in general we do not know which do.

The goal of this paper is to define a gluing construction which produces new examples of AC SLs and of SL conifolds in $\C^m$. It is also Part II of a multi-step project aiming to set up a general theory of special Lagrangian conifolds. Two other papers related to this project are currently available: \cite{pacini:ics}, \cite{pacini:sldefs} (see also \cite{pacini:sldefsextended}). The first of these papers provides the analytic foundations for our gluing construction, the second provides the geometric foundations, but actually each paper is self-contained and has its own, independent, focus.

One of the most basic examples to which our construction applies is the following. Given a pair of transverse SL planes satisfying certain ``angle conditions'', Lawlor \cite{lawlor} constructed an AC SL interpolating between them: these submanifolds are known as \textit{Lawlor necks}. Assume given a finite number of SL planes in $\C^m$, such that each intersection satisfies Lawlor's conditions. We then prove that it is possible to glue a rescaled copy of the appropriate Lawlor neck into a neighbourhood of each intersection point obtaining a family of AC SL submanifolds, parametrized by the ``size'' of the necks, which converges to the initial configuration of planes as the parameters tend to zero, cf. Example \ref{ex:SL_planes_desing}. This result extends previous work by Arezzo-Pacard \cite{arezzopacard} which had produced minimal (but not Lagrangian) desingularizations of similar configurations of SL planes under additional technical hypotheses. In particular, our construction produces the first examples of smooth SL conifolds in $\C^m$ with 3 or more planar ends.
 
In the above example one should think of transverse intersections as special types of isolated conical singularities and of Lawlor necks as special types of local desingularizations of the singularity. Our main results, Theorem \ref{thm:ACSLgluing} and Theorem \ref{thm:conifold_gluing}, then generalize this example in two ways. Theorem \ref{thm:ACSLgluing} is a gluing result involving arbitrary singularities and corresponding AC SL desingularizations. The final product is a family of new AC SLs. Example \ref{ex:SL_planes_desing}, described above, is a corollary of this result. As further corollaries, Example \ref{ex:SL_desing} generalizes Example \ref{ex:SL_planes_desing} by desingularizing transverse intersections of arbitrary AC SLs. It also shows how to attach an arbitrary number of new ends onto a given AC SL. Example \ref{ex:SL_doubling} shows how to replace arbitrary isolated conical singularities with AC ends, thus transforming any singular conifold into a smooth AC SL.

Example \ref{ex:SL_doubling} is interesting in that it completely desingularizes any SL conifold. Notice however this is only true in a rather weak sense: it replaces compact conical singularities with new non-compact ends, thus changing the nature of the initial conifold rather drastically. In the light of \cite{haskinspacini}, if the goal is indeed to obtain a smooth object, this may actually be the best possible result: in general one should expect the existence of conifolds $L$ some of whose singularities do admit AC SL desingularizations, but others do not. The alternative solution for dealing with such conifolds is to work only with the ``smoothable'' singularities, leaving the others alone. The result will only be a partial desingularization of $L$, but this outcome may be  preferable to that of Example \ref{ex:SL_doubling}. This type of gluing forces us however to work, from start to finish, with singular conifolds. The additional complications are dealt with in Theorem \ref{thm:conifold_gluing}. The final product is a family of new SL conifolds. In particular, this theorem  gives the option of adding new planar ends to a singular conifold, \textit{e.g.} to a cone, thus generating the first (non-trivial) examples of SL conifolds which have a conical singularity but are not, globally, cones: cf. Example \ref{ex:SL_desing+sings} for details. The theorem also gives the possibility of taking a singular conifold $L$, cutting out one singularity, and gluing in a different singular conifold $\hat{L}$. This is potentially very interesting: coupling such a construction with Joyce's gluing results would produce a way to jump between different compact singular SLs inside a CY manifold $M$. Unfortunately, as in the work of Joyce mentioned above, this type of construction is still largely theoretical due to the lack of interesting examples of  conifolds $L$, $\hat{L}$ to apply it to.

Theorems \ref{thm:ACSLgluing} and \ref{thm:conifold_gluing} may very well be optimal. Our technical hypotheses concern only the parameters used to set up the gluing process: these parameters disappear in the final result, so they are of no real importance. We also obtain very good control over the final asymptotics of the conifold, near the CS and AC ends. Our theorems set only two restrictions on the SL conifolds themselves, as follows. Theorem \ref{thm:conifold_gluing} requires the remaining singularities to be ``stable'': this assumption is rather natural and has already appeared in previous work of Joyce and Haskins, see also \cite{ohnita} and \cite{pacini:sldefs}. The second restriction is as follows. The first step in the gluing process concerns the construction of certain ``approximate solutions'' to the gluing problem: basically, it is necessary to choose a Lagrangian interpolation between the initially given SL conifolds $L$, $\hat{L}$. Using an additional assumption as in Joyce \cite{joyce:III} we can reduce this problem to a choice of ``interpolating function''. Roughly speaking, this assumption is that the conifold $\hat{L}$ converges fairly quickly to its asymptotic cone. This is a strong assumption: basically only one category of examples is known, cf. Example \ref{e:C^m_examples} for details. Work on how to remove this restriction, as in Joyce \cite{joyce:IV}, is currently in progress. Notice however that: (i) the new examples obtained in our paper show that, even with the current assumption, these few known ingredients can be combined to produce interesting new AC SLs and SL conifolds; (ii) many of the examples we produce have the same strong rate of convergence so they can be fed back into the same machine to yield new, more complicated, submanifolds; 
(iii) even removing this restriction will add only a few more AC SLs to our list of possible ingredients, bringing us back to the general issue that the known number of AC SLs is very small.

The above presentation should convince the reader that the choice of working in the flat ambient space $\C^m$ is based on precise goals rather than on technical convenience. As already explained, this choice forces us to work with non-compact submanifolds. Generally speaking, non-compact submanifolds are more difficult to work with than compact ones. In our case non-compactness is partially compensated for by the assumption of working with conifolds. The theory concerning ``large ends'' is the same as that concerning isolated conical singularities, so in this paper we can rely on several ideas introduced by Joyce in his work on compact CS SLs in CYs. This is particularly true when dealing with the more geometric aspects of these gluing theorems. In particular, the Lagrangian neighbourhoods we use in Section \ref{ss:conifold_defs} to set up the gluing problem go mostly back to Joyce, with minor adaptations and changes introduced in \cite{pacini:sldefsextended}. The quadratic estimates of Section \ref{s:SL_defs} also originate in the work of Joyce, though a certain amount of extra work is needed to adapt them to the presence of CS and AC ends.

From the analytic point of view, however, the situation is rather different. Joyce formulated his results using non-weighted Sobolev spaces. In the non-compact setting, weighted Sobolev spaces become inevitable. This is not as inconvenient as it may seem: using weighted spaces simplifies other issues and they have thus become the standard choice in other gluing problems. In this paper we get two main benefits out of using weighted spaces:
(i) as proved in \cite{pacini:ics}, coupling weighted spaces with careful choices of how to parametrize the ``neck regions'' of our manifolds allows us to obtain estimates which are completely uniform with respect to the gluing parameter; (ii) the presence of non-compact ends allows us to choose certain weights ``at infinity'' which kill the kernel of our linearized operator, thus leading to invertibility. In particular, when dealing with conifolds in $\C^m$ the number of components of $L$ makes no difference. 

Together, (i) and (ii) streamline the gluing process considerably, compared to analogous results for compact SLs. It is thus interesting to compare our techniques with those used by Joyce and, earlier, by Butscher \cite{butscher} and Lee \cite{lee}. Of course, it is important to emphasize that in the compact setting straight-forward invertibility of the linearized operator is not possible: each connected component of $L$ introduces new ``approximate kernel''. Some invertibility results contained in \cite{pacini:ics} apply however to compact manifolds. We thus believe it is possible to combine the methods used in this paper with the results of \cite{pacini:ics} to obtain stronger and simpler results for compact SLs than those currently known.  

It is also interesting to compare the SL gluing problem with gluing problems involving other classes of calibrated submanifolds. A special feature of the SL case is that the SL condition can be decoupled into two, weaker, conditions. In particular, SL submanifolds are Lagrangian so by restricting to the subspace of Hamiltonian deformations one can reduce some aspects of SL geometry from a system of PDEs to a scalar PDE. Analogous reductions do not exist for other calibrations, so one is forced to work with systems of PDEs throughout. We refer to \cite{lotay1}, \cite{lotay2} for gluing results concerning compact ``coassociative'' submanifolds and to \cite{nordstrom} for gluing results concerning compact ``associative'' submanifolds. The techniques of \cite{pacini:ics} should be applicable to these classes of submanifolds, as well as to a variety of other gluing problems.

\ 

\textbf{Important remarks: }Throughout this paper we will often encounter chains of inequalities of the form
\begin{equation*}
 |e_0|\leq C_1|e_1|\leq C_2|e_2|\leq\dots
\end{equation*}
The constants $C_i$ will often depend on factors that are irrelevant within the given context. In this case we will sometimes simplify such expressions by omitting the subscripts of the constants $C_i$, \textit{i.e.} by using a single constant $C$. Furthermore, to simplify certain arguments, we always assume that our manifolds satisfy the dimension constraint $m\geq 3$. We refer to Joyce \cite{joyce:I} Section 2 for a presentation of some of the issues which arise in the underlying linear elliptic theory when $m=2$.

%%%%%%%%%%%%%%%%%%%%%%%%%%%%%%%%%%%%%%%%%%%%%%%%%%%%%%%%%%%%%%
%%%%%%%%%%%%%%%%%%%%%%%%%%%%%%%%%%%%%%%%%%%%%%%%%%%%%%%%%%%%%%

\section{Review of Lagrangian conifolds}\label{s:con_review}

\begin{definition} \label{def:lagr}
Let $(M^{2m},\omega)$ be a symplectic manifold. An embedded or immersed
submanifold $\iota:L^m\rightarrow M$ is \textit{Lagrangian} if
$\iota^*\omega\equiv 0$. The immersion allows us to view the tangent bundle $TL$
of $L$ as a subbundle of $TM$ (more precisely, of $\iota^*TM$). When $M$ is
K\"ahler with structures $(g,J,\omega)$ it is simple to check that $L$ is
Lagrangian if and only if $J$ maps $TL$ to the normal bundle $NL$ of $L$, \textit{i.e.}
$J(TL)=NL$. 

We will denote by $\tilde{g}, \tilde{J},\tilde{\omega}$ the standard Euclidean, complex and symplectic structures on $\R^{2m}=\C^m$. A subset $\mathcal{C}$ of $\R^{2m}$ is a \textit{cone} if it is invariant under dilations of $\R^{2m}$, \textit{i.e.} if $t\cdot\mathcal{C}=\mathcal{C}$, for all $t>0$. It is uniquely identified by its \textit{link} $\Sigma:=\mathcal{C}\cap \Sph^{2m-1}$.
\end{definition}

\begin{definition}\label{def:aclagsub}
Let $L^m$ be a smooth manifold, not necessarily connected. Assume given a Lagrangian immersion
$\iota:L\rightarrow \C^m$. 
We say that $L$ is an \textit{asymptotically conical Lagrangian submanifold}
with \textit{rate} $\boldsymbol{\lambda}$ if it satisfies the following
conditions.
\begin{enumerate}
\item We are given a compact subset $K\subset L$ such that $S:=L\setminus K$ has
a finite number of connected components $S_1,\dots,S_e$.
\item We are given Lagrangian cones $\mathcal{C}_i\subset \C^m$ with smooth
connected links $(\Sigma_i,g_i'):=\mathcal{C}_i\bigcap \Sph^{2m-1}$. Let
$\iota_i:\Sigma_i\times (0,\infty)\rightarrow \C^m$ denote the natural
immersions, parametrizing $\mathcal{C}_i$.
\item We are finally given an $e$-tuple of \textit{convergence rates}
$\boldsymbol{\lambda}=(\lambda_1,\dots,\lambda_e)$ with $\lambda_i<2$, \textit{centers} $p_i\in\C^m$ and
diffeomorphisms $\phi_i:\Sigma_i\times [R,\infty)\rightarrow \overline{S_i}$
for some $R>0$ such that, for $r\rightarrow\infty$ and all $k\geq 0$,
\begin{equation}\label{eq:aclagdecay}
|\tnabla^k(\iota\circ\phi_i-(\iota_i+p_i))|=O(r^{\lambda_i-1-k})
\end{equation}
with respect to the conical metric $\tg_i=dr^2+r^2g_i'$ on $\cone_i$.
\end{enumerate}
\end{definition}

\begin{definition}\label{def:cslagsub}
Let $\bar{L}^m$ be a manifold, not necessarily connected, smooth except for a finite number of possibly singular
points $\{x_1,\dots,x_e\}$. Assume given a continuous map
$\iota:\bar{L}\rightarrow \C^m$ which restricts to a smooth Lagrangian immersion of
$L:=\bar{L}\setminus\{x_1,\dots,x_e\}$. We say that $\bar{L}$ (or $L$) is a \textit{conically singular
Lagrangian submanifold} with \textit{rate} $\boldsymbol{\mu}$ if it satisfies
the following conditions.
\begin{enumerate}
\item We are given open connected neighbourhoods $S_i$ of $x_i$.
\item We are given Lagrangian cones $\mathcal{C}_i\subset \C^m$ with smooth
connected links $(\Sigma_i,g_i'):=\mathcal{C}_i\bigcap \Sph^{2m-1}$. Let
$\iota_i:\Sigma_i\times (0,\infty)\rightarrow \C^m$ denote the natural
immersions, parametrizing $\mathcal{C}_i$.
\item We are finally given an $e$-tuple of \textit{convergence rates}
$\boldsymbol{\mu}=(\mu_1,\dots,\mu_e)$ with $\mu_i>2$, \textit{centers} $p_i\in\C^m$ and diffeomorphisms
$\phi_i:\Sigma_i\times (0,\epsilon]\rightarrow \overline{S_i}\setminus\{x_i\}$ such that, for
$r\rightarrow 0$ and all $k\geq 0$,
\begin{equation}\label{eq:cslagdecay}
|\tnabla^k(\iota\circ\phi_i-(\iota_i+p_i))|=O(r^{\mu_i-1-k})
\end{equation}
with respect to the conical metric $\tg_i=dr^2+r^2g_i'$ on $\cone_i$. 
\end{enumerate}
\end{definition}

\begin{remark} Notice that in the case of a conically singular submanifold the centers are uniquely defined by the fact that $\iota(x_i)=p_i$. For asymptotically conical submanifolds the centers are uniquely defined only when $\lambda_i<1$. For other values of $\lambda_i$ the asymptotic cones (which by definition pass through the origin of $\C^m$) are unique but the convergence rate is so weak that the submanifold also converges to any translated copy $\mathcal{C}_i+p_i'$ of the cones (\textit{e.g.} if $\lambda_i=1$), or even slowly pulls away from the cones (if $\lambda_i>1$). In these cases we consider the centers $p_i$ as an additional piece of data.
 \end{remark}

\begin{definition} \label{def:accslagsub}
Let $\bar{L}^m$ be a manifold, not necessarily connected, smooth except for a finite number of possibly singular
points $\{x_1,\dots,x_s\}$. Assume
given a continuous map $\iota:\bar{L}\rightarrow \C^m$ which restricts to a smooth Lagrangian
immersion of $L:=\bar{L}\setminus\{x_1,\dots,x_s\}$. We say that $\bar{L}$ (or
$L$) is a \textit{CS/AC Lagrangian submanifold} with \textit{rate}
$(\boldsymbol{\mu},\boldsymbol{\lambda})$ if neighbourhoods $S_i$ of the points
$x_i$ satisfy Definition \ref{def:cslagsub} with rates $\mu_i$ and the complement $\bar{L}\setminus \cup S_i$ satisfies Definition \ref{def:aclagsub} with rates $\lambda_i$. We will often not distinguish between $\bar{L}$ and $L$.

We use the generic term \textit{Lagrangian conifold} to indicate any CS, AC or CS/AC Lagrangian
submanifold. We will denote by $g:=\iota^*\tilde{g}$ the induced metric on $L$.
\end{definition}

\begin{example} \label{ex:smooth_is_sing}
 Notice that any smooth point $p$ of a Lagrangian submanifold $L$ can be labelled as a CS singularity. Indeed, write $L$ as a graph over its tangent plane $T_pL$ and set $\mathcal{C}:=T_pL$. Since $\mathcal{C}$ is smooth through the origin and the graphing map is also smooth, a Taylor expansion shows that the graphing map vanishes to first order and the remainder is quadratic + higher order. By using polar coordinates on $T_pL$ one then obtains a parametrization of $L$ as in Definition \ref{def:cslagsub}, with $\mu=3$. 

In particular this is true for intersection and self-intersection points of the immersion $\iota$. Consider, for example, the case of two Lagrangian planes in $\C^m$ intersecting transversely in one point $p$. If we let $L$ denote the disjoint union of two copies of $\R^m$ we can parametrize our configuration of Lagrangian planes in $\C^m$ via an immersion of $L$ which maps the origins to $p$. This submanifold clearly has two AC ends. It would be natural to consider the origins as smooth points in $\R^m$ but we can also decide to label them as singularities. In this case the submanifold will also have two CS ends. This latter set-up will allow us, in Sections \ref{s:lagr_sum} and \ref{s:SL_sum}, to ``desingularize'' the point $p$ by gluing in a small Lagrangian ``neck'' which interpolates between the two planes. The initially disconnected manifold $L$ will then become connected.
\end{example}

Let $\iota:L\rightarrow\C^m$ be a Lagrangian conifold, with induced metric $g$. Choose a CS component $S_i$. Let
$\phi_i$ denote the diffeomorphism of Definition \ref{def:cslagsub} and set 
$\nu_i:=\mu_i-2>0$. One can then check that, as $r\rightarrow
0$ and for all $k\geq 0$,
\begin{equation}\label{eq:cs_metric}
|\tnabla^k(\phi_i^*g-\tg_i)|_{\tg_i}=O(r^{\nu_i-k}),
\end{equation}
where $\tnabla$ is the Levi-Civita connection on $\mathcal{C}_i$ defined by $\tg_i$. 

Analogously, choose an AC component $S_i$, let $\phi_i$ be the diffeomorphism of Definition \ref{def:aclagsub} and set $\nu_i:=\lambda_i-2<0$. Then, as $r\rightarrow
\infty$ and for all $k\geq 0$,
\begin{equation}\label{eq:ac_metric}
|\tnabla^k(\phi_i^*g-\tg_i)|_{\tg_i}=O(r^{\nu_i-k}),
\end{equation}
where $\tnabla$ is the Levi-Civita connection on $\mathcal{C}_i$ defined by $\tg_i$.

This shows that the Riemannian manifold $(L,g)$ is an \textit{abstract conifold} in the sense of \cite{pacini:ics}. We will call the components $S_i$ the \textit{ends} of $L$. On an abstract conifold each end defines a connected \textit{abstract link} $(\Sigma_i,g_i')$. The end is diffeomorphic to the \textit{abstract cone} $C_i=\Sigma_i\times (0,\infty)$ and the metric on the end is asymptotic, in the above sense, to the conical metric $\tilde{g}_i:=dr^2+r^2g_i'$.

\begin{remark}\label{rem:tensored_metrics}
Set $\sigma:=\phi_i^*g-\tg_i$. The object $\tnabla^k\sigma$ belongs to a bundle obtained via tensor products, so the metric used in Equations \ref{eq:cs_metric}, \ref{eq:ac_metric} to measure the norm of $\tnabla^k\sigma$ is obtained by tensoring the metric $\tg_i$ (applied to $\tnabla^k$) with the same metric $\tg_i$ (applied to $\sigma$). It is sometimes convenient to emphasize this fact by using the alternative notation  $|\tnabla^k(\phi_i^*g-\tg_i)|_{\tg_i\otimes\tg_i}$. One can then check that Equation \ref{eq:cs_metric} coincides with
\begin{equation}\label{eq:cs_metric_bis}
|\tnabla^k(\phi_i^*g-\tg_i)|_{r^{-2}\tg_i\otimes\tg_i}=O(r^{\nu_i}).
\end{equation}
Equations \ref{eq:aclagdecay}, \ref{eq:cslagdecay} and \ref{eq:ac_metric} can also be rewritten this way. 
\end{remark}

Analysis on abstract conifolds is a well-developed theory. The next two sections summarize the main definitions and results relevant to this paper, referring to \cite{pacini:ics} for details and further references.

\subsection{Analysis on abstract conifolds}\label{ss:analysis_conifolds}

Let $E$ be a vector bundle over $(L,g)$. Assume $E$ is endowed with a metric and
metric connection $\nabla$: we say that $(E,\nabla)$ is a \textit{metric pair}.
In this paper $E$ will usually be a bundle of differential forms $\Lambda^r$
on $L$, endowed with the metric and Levi-Civita connection induced from $g$. 

Regarding notation, given a vector
$\boldsymbol{\beta}=(\beta_1,\dots,\beta_e)\in \R^e$ and $j\in\N$ we set
$\boldsymbol{\beta}+j:=(\beta_1+j,\dots,\beta_e+j)$. We write
$\boldsymbol{\beta}\geq\boldsymbol{\beta}'$ if and only if $\beta_i\geq\beta_i'$.

\begin{definition}\label{def:csac_sectionspaces}
Let $(L,g)$ be a conifold with $e$ ends. We say that a smooth function
$\rho:L\rightarrow (0,\infty)$ is a \textit{radius function} if $\rho\circ\phi_i(x)\equiv
r$ on each end. Given any vector
$\boldsymbol{\beta}=(\beta_1,\dots,\beta_{e})\in\R^{e}$, choose a smooth function
$\boldsymbol{\beta}:L\rightarrow\R$ which, on each end $S_i$, restricts to the constant
$\beta_i$. Set $w(x):= \rho(x)^{-\boldsymbol{\beta}(x)}$.
We will refer to either $\boldsymbol{\beta}$ or $w$ as a \textit{weight} on $L$.

Given any metric pair $(E,\nabla)$, the \textit{weighted Sobolev spaces} are
defined by
\begin{equation}\label{eq:weighted_sob}
W^p_{k;\boldsymbol{\beta}}(E):=\mbox{Banach space completion of the space
}\{\sigma\in C^\infty(E):\|\sigma\|_{W^p_{k;\boldsymbol{\beta}}}<\infty\},
\end{equation}
where we use the norm (cf. Remark \ref{rem:tensored_metrics} for notation) 
\begin{equation*}
\|\sigma\|_{W^p_{k;\boldsymbol{\beta}}}:=\left(\Sigma_{j=0}^k\int_L|w\rho^j\nabla^j\sigma|_g^p\,\rho^{-m}\,\mbox{vol}_g\right)^{1/p}=\left(\Sigma_{j=0}^k\int_L|w\nabla^j\sigma|_{\rho^{-2}g\otimes g_E}^p\,\mbox{vol}_{\rho^{-2}g}\right)^{1/p}.
\end{equation*}
The \textit{weighted spaces of $C^k$ sections} are defined by
\begin{equation}\label{eq:weighted_C^k}
C^k_{\boldsymbol{\beta}}(E):=\{\sigma\in C^k(E):
\|\sigma\|_{C^k_{\boldsymbol{\beta}}}<\infty\},
\end{equation}
where we use the norm $\|\sigma\|_{C^k_{\boldsymbol{\beta}}}:=\sum_{j=0}^k
\mbox{sup}_{x\in L}|w\rho^j\nabla^j\sigma|_g$. Equivalently,
$C^k_{\boldsymbol{\beta}}(E)$ is the space of sections $\sigma\in C^k(E)$ such
that $|\nabla^j \sigma|=O(r^{\boldsymbol{\beta}-j})$ as $r\rightarrow 0$
(respectively, $r\rightarrow\infty$) along each CS (respectively, AC) end. These
are also Banach spaces.

To conclude, the  \textit{weighted space of smooth sections} is defined by
\begin{equation*}
C^\infty_{\boldsymbol{\beta}}(E):=\bigcap_{k\geq 0} C^k_{\boldsymbol{\beta}}(E).
\end{equation*}
Equivalently, this is the space of smooth sections such that $|\nabla^j
\sigma|=O(\rho^{\boldsymbol{\beta}-j})$ for all $j\geq 0$. This space has a natural
Fr\'echet structure. 

When $E$ is the trivial $\R$ bundle over $L$ we obtain weighted spaces of
functions on $L$. We usually denote these by $W^p_{k,\boldsymbol{\beta}}(L)$ and
$C^k_{\boldsymbol{\beta}}(L)$. 
In the case of a CS/AC manifold we will sometimes
separate the CS and AC weights, writing
$\boldsymbol{\beta}=(\boldsymbol{\mu},\boldsymbol{\lambda})$ for some
$\boldsymbol{\mu}\in \R^s$ and some $\boldsymbol{\lambda}\in \R^l$. We then
write $C^k_{(\boldsymbol{\mu},\boldsymbol{\lambda})}(E)$ and
$W^p_{k,(\boldsymbol{\mu},\boldsymbol{\lambda})}(E)$.
\end{definition}

\begin{remark}\label{rem:equivalentscaledmetrics}
Let $L$ be a manifold with ends equipped with two conifold metrics $g$, $\hat{g}$. We say that $g$, $\hat{g}$ are \textit{scaled-equivalent} if they
satisfy the following assumptions:
\begin{enumerate}
\item There exists $C_0>0$ such that 
\begin{equation*}
(1/C_0)g\leq \hat{g}\leq C_0 g.
\end{equation*}
\item For all $j\geq 1$ there exists $C_j>0$
such that
\begin{equation*}
|\nabla^j\hat{g}|_{\rho^{-2}g\otimes g}\leq C_j,
\end{equation*}
where $\nabla$ is the Levi-Civita connection defined by $g$ and we are using the notation introduced in Remark \ref{rem:tensored_metrics}.
\end{enumerate}
In this case one can prove that derivatives with respect to the corresponding Levi-Civita connections $\nabla$, $\hat{\nabla}$ coincide up to lower-order terms. The corresponding weighted Sobolev spaces also coincide, with equivalent norms. We refer to \cite{pacini:ics} for details. In particular, Definition \ref{eq:ac_metric} implies that, for $R$ large enough, the metrics $\phi_i^*g$, $\tg_i$ are scaled-equivalent. The analogue is true for CS ends.
\end{remark}

For these weighted spaces the following Sobolev Embedding Theorem holds.

\begin{theorem}\label{thm:embedding}
Let $(L,g)$ be a conifold and $(E,\nabla)$ be a metric pair over $L$.
Assume $k\geq 0$, $l\in\{1,2,\dots\}$ and $p\geq 1$. Set
$p^*_l:=\frac{mp}{m-lp}$. Then, for all
$\boldsymbol{\beta}$,
\begin{enumerate}
\item If $lp<m$ then there exists a continuous embedding
$W^p_{k+l,\boldsymbol{\beta}}(E)\hookrightarrow
W^{p^*_l}_{k,\boldsymbol{\beta}}(E)$. In other words, there exists a ``Sobolev embedding constant'' $C>0$ such that, for all $\sigma\in W^p_{k+l,\boldsymbol{\beta}}(E)$, 
\begin{equation*}
 \|\sigma\|_{W^{p^*_l}_{k,\boldsymbol{\beta}}}\leq C\|\sigma\|_{W^p_{k+l,\boldsymbol{\beta}}}.
\end{equation*}
\item If $lp=m$ then, for all $q\in [p,\infty)$, there exist continuous
embeddings $W^p_{k+l,\boldsymbol{\beta}}(E)\hookrightarrow
W^q_{k,\boldsymbol{\beta}}(E)$.
\item If $lp>m$ then there exists a continuous embedding
$W^p_{k+l,\boldsymbol{\beta}}(E)\hookrightarrow C^k_{\boldsymbol{\beta}}(E)$. 
\end{enumerate}
Furthermore, assume $lp>m$ and $k\geq 0$. Then the corresponding weighted
Sobolev spaces are closed under multiplication, in the following sense. For
any $\boldsymbol{\beta}_1$ and $\boldsymbol{\beta_2}$ there exists $C>0$ such
that, for all $u\in W^p_{k+l,\boldsymbol{\beta_1}}$ and $v\in
W^p_{k+l,\boldsymbol{\beta_2}}$,
\begin{equation*}
\|uv\|_{W^p_{k+l,\boldsymbol{\beta_1}+\boldsymbol{\beta_2}}}\leq
C\|u\|_{W^p_{k+l,\boldsymbol{\beta_1}}}\|v\|_{W^p_{k+l,\boldsymbol{\beta_2}}}.
\end{equation*}
\end{theorem}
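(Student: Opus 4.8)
The plan is to reduce the weighted statement on the conifold $(L,g)$ to the classical Sobolev embedding and multiplication theorems on a fixed compact domain, exploiting the quasi-homogeneity built into the weighted norms. Fix a radius function $\rho$ and cover $L$ by a relatively compact piece $L_0$ together with, on each end $S_i$, a dyadic family of annuli: for an AC end, $A_{i,n}:=\phi_i(\Sigma_i\times[2^n,2^{n+2}])$ for $n\to+\infty$; for a CS end, $A_{i,n}:=\phi_i(\Sigma_i\times[2^{-n-2},2^{-n}])$ for $n\to+\infty$. These form an open cover of $L$ with uniformly bounded overlap; fix a subordinate partition of unity. On $L_0$ the statement is the classical one, so all the content lies in the ends.

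On each annulus, rescale the radial variable by $2^{-n}$ (resp.\ by $2^{n}$), obtaining a fixed model domain $\Omega_i:=\Sigma_i\times[1,4]$ and diffeomorphisms $\psi_{i,n}:\Omega_i\to A_{i,n}$. The crucial point -- and the heart of the argument -- is that the rescaled metrics $2^{-2n}\psi_{i,n}^*g$ (resp.\ $2^{2n}\psi_{i,n}^*g$) converge in $C^\infty(\Omega_i)$, \emph{uniformly in $n$}, to the fixed conical metric $dr^2+r^2g_i'$; this is exactly what Equations \ref{eq:cs_metric}--\ref{eq:ac_metric} assert (equivalently, the scaled-equivalence of $\phi_i^*g$ with $\tg_i$ from Remark \ref{rem:equivalentscaledmetrics}). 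Hence there is a single constant, independent of $i$ and $n$, relative to which all these rescaled metrics are uniformly equivalent to a fixed smooth metric on the compact manifold-with-boundary $\Omega_i$, with uniformly bounded covariant derivatives. The standard Sobolev embedding and multiplication theorems on $\Omega_i$ therefore hold, with a uniform constant, for every $A_{i,n}$ in its rescaled metric.

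Next, track how the weighted norms transform under $\psi_{i,n}$. Writing the norm in the rescaled form of Remark \ref{rem:tensored_metrics}, the measure $\vol_{\rho^{-2}g}$ and the tensor norm $|\cdot|_{\rho^{-2}g\otimes g_E}$ are scale-invariant up to the uniform $C^\infty$ bounds above (the powers of $\rho$ coming from $g_E$ on a bundle of forms vary by a bounded factor on each $A_{i,n}$ and are absorbed into the weight); and on $A_{i,n}$ the weight $w=\rho^{-\boldsymbol\beta}$ equals $2^{-n\beta_i}$ up to the bounded factor $4^{|\beta_i|}$, with $|\nabla^a w|=O(\rho^{-\beta_i-a})$. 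Consequently, up to a uniform constant, the local weighted norm $\|\sigma\|_{W^p_{k;\boldsymbol\beta}(A_{i,n})}$ equals $2^{-n\beta_i}$ times the unweighted $W^{k,p}(\Omega_i)$-norm of $\psi_{i,n}^*\sigma$ in the fixed metric, and likewise for the $C^k$-norm and for the higher Sobolev exponents (the discrepancy between $w\nabla^{j}\sigma$ and $\nabla^{j}(w\sigma)$ costs only lower-order, uniformly bounded terms).

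Finally, assemble the pieces. In case (1), the local embedding on $\Omega_i$ combined with the re-weighting above gives $\|\sigma\|_{W^{p^*_l}_{k;\boldsymbol\beta}(A_{i,n})}\le C\,\|\sigma\|_{W^p_{k+l;\boldsymbol\beta}(A_{i,n})}=:C\,a_{i,n}$; raising to the power $p^*_l$, summing over $i$ and $n$, using the bounded overlap, and applying the elementary inequality $\sum a_{i,n}^{p^*_l}=\sum(a_{i,n}^{p})^{p^*_l/p}\le(\sum a_{i,n}^{p})^{p^*_l/p}$, valid since $p^*_l/p\ge1$, yields $\|\sigma\|_{W^{p^*_l}_{k;\boldsymbol\beta}}\le C\|\sigma\|_{W^p_{k+l;\boldsymbol\beta}}$. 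Case (2) is identical, as there $q\ge p$. In case (3) one replaces the last step by $\sup_{i,n}a_{i,n}\le(\sum a_{i,n}^p)^{1/p}$ to pass from the local $C^k$ bounds to the global $C^k_{\boldsymbol\beta}$ bound. The multiplication statement follows the same pattern: the rescaled estimate $\|uv\|_{W^p_{k+l;\boldsymbol\beta_1+\boldsymbol\beta_2}(A_{i,n})}\le C\,\|u\|_{W^p_{k+l;\boldsymbol\beta_1}(A_{i,n})}\|v\|_{W^p_{k+l;\boldsymbol\beta_2}(A_{i,n})}$ holds since $(k+l)p\ge lp>m$ makes $W^{k+l,p}(\Omega_i)$ an algebra and the weights multiply, and then $\sum_{i,n}\|u\|^p\|v\|^p\le(\sup_{i,n}\|u\|^p)\,\sum_{i,n}\|v\|^p\le\|u\|^p_{W^p_{k+l;\boldsymbol\beta_1}}\|v\|^p_{W^p_{k+l;\boldsymbol\beta_2}}$. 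The only genuinely non-formal ingredient is the uniform $C^\infty$-control of the rescaled metrics in the second paragraph; equivalently, one observes that $(L,\rho^{-2}g)$ is a manifold of bounded geometry with cylindrical ends on which $\rho^{-\boldsymbol\beta}$ is a locally bounded exponential weight, and invokes the classical weighted Sobolev theory in that setting. See \cite{pacini:ics} for the details.
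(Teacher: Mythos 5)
The paper does not prove this theorem: it is quoted verbatim from the companion paper \cite{pacini:ics}, and your dyadic-annulus/rescaling argument is exactly the standard proof given there (and in the broader Lockhart--McOwen-style literature), so your proposal is correct and takes essentially the same route as the source. The only points worth being explicit about when writing it up in full are the partition-of-unity bookkeeping in the rescaled metrics and the (harmless, since it is the same on both sides of each inequality) shift of the effective weight caused by the fibre metric $g_E$ on $\Lambda^r$ under rescaling.
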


\subsection{The Laplace operator on abstract conifolds}\label{ss:laplace_conifolds}

We now summarize some analytic results concerning the Laplace operator on
conifolds.

\begin{definition}\label{def:exceptionalweights}
Let $(\Sigma,g')$ be a compact Riemannian manifold, not necessarily connected. Consider the cone
$C:=\Sigma\times (0,\infty)$ endowed with the conical metric
$\tilde{g}:=dr^2+r^2g'$. Let $\Delta_{\tilde{g}}$ denote the corresponding
Laplace operator acting on functions.

For each component $(\Sigma_j,g_j')$ of $(\Sigma,g')$ and each $\gamma\in\R$,
consider the space of homogeneous harmonic functions 
\begin{equation}\label{eq:ac_harmonicter}
V^j_{\gamma}:=\{r^\gamma\sigma(\theta):
\Delta_{\tilde{g}}(r^{\gamma}\sigma)=0\}.
\end{equation}
Set $m^j(\gamma):=\mbox{dim}(V^j_\gamma)$. One can show that $m^j(\gamma)>0$
if and only if $\gamma$ satisfies the equation
\begin{equation}\label{eq:exceptionalforlaplacian}
\gamma=\frac{(2-m)\pm\sqrt{(2-m)^2+4e_n^j}}{2},
\end{equation}
for some eigenvalue $e_n^j$ of $\Delta_{g_j'}$ on $\Sigma_j$. Given any weight
$\boldsymbol{\gamma}\in \R^e$, we now set $m(\boldsymbol{\gamma}):=\sum_{j=1}^e
m^j(\gamma_j)$. Let $\mathcal{D}\subseteq\R^e$ denote the set of weights
$\boldsymbol{\gamma}$ for which $m(\boldsymbol{\gamma})>0$. We call these the
\textit{exceptional weights} of $\Delta_{\tg}$.
\end{definition}

Let $(L,g)$ be a conifold, asymptotic to a cone $(C,\tg)$ in
the sense of Equations \ref{eq:cs_metric}, \ref{eq:ac_metric}. Roughly speaking, the fact that $g$
is asymptotic to $\tilde{g}$
implies that the Laplace operator $\Delta_g$ is asymptotic to $\Delta_{\tg}$, cf. Remark \ref{rem:equivalentscaledmetrics}.
Applying Definition \ref{def:exceptionalweights} to $C$ defines weights
$\mathcal{D}\subseteq\R^e$: we call these the \textit{exceptional weights} of
$\Delta_g$. This terminology is due to the following result, which indicates that certain aspects of the behaviour of $\Delta_g$ depends only on its asymptotics. 

\begin{theorem}\label{thm:laplaceresults}
Let $(L,g)$ be a conifold with $e$ ends. Let $\mathcal{D}\subseteq\R^e$ denote
the exceptional weights of $\Delta_g$. Then $\mathcal{D}$ is discrete and the Laplace operator 
\begin{equation*}
\Delta_g:W^p_{k,\boldsymbol{\beta}}(L)\rightarrow
W^p_{k-2,\boldsymbol{\beta}-2}(L)
\end{equation*}
is Fredholm if and only if $\boldsymbol{\beta}\notin \mathcal{D}$.
\end{theorem}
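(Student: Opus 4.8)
The plan is to follow the Lockhart--McOwen/Melrose template for elliptic operators on manifolds with conical ends, adapted to the weighted Sobolev spaces of Definition \ref{def:csac_sectionspaces}.

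\emph{Discreteness and reductions.} Each link $(\Sigma_j,g_j')$ is compact, so $\Delta_{g_j'}$ has discrete spectrum $\{e_n^j\}$ accumulating only at $+\infty$; by Equation \ref{eq:exceptionalforlaplacian} the set of $\gamma$ with $m^j(\gamma)>0$ is the image of $\{e_n^j\}$ under the two algebraic branches of that equation, hence a discrete subset $\mathcal{D}_j\subset\R$, and $\mathcal{D}$ is the corresponding locally finite union of affine hyperplanes (a discrete subset of $\R$ when $e=1$). I record for later that $\mathcal{D}_j$ is invariant under the involution $\gamma\mapsto 2-m-\gamma$, since this involution exchanges the two branches in \ref{eq:exceptionalforlaplacian}. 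Next I would localize: pick cutoffs subordinate to a cover of $L$ by a precompact piece $K$ and the ends $S_i$, reducing the Fredholm properties of $\Delta_g$ to an interior elliptic estimate on $K$ (with the classical Rellich theorem on $K$ supplying compactness) together with model estimates on each $S_i$; by scaled-equivalence (Remark \ref{rem:equivalentscaledmetrics}) one may on each end replace $g$ by the exact conical metric $\tg_i$ up to a perturbation that is lower order in the weight and hence absorbable into the compact remainder, so it suffices to study $\Delta_{\tg_i}$ on a half-cone.

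\emph{Fredholmness for $\boldsymbol\beta\notin\mathcal{D}$.} On an end, put $t=\mp\log r$ to turn the cone into a half-cylinder; after conjugation by a power of $r$ the Mellin symbol of $\Delta_{\tg_i}$ is the indicial family $I_\gamma:=\Delta_{g_i'}+\gamma(\gamma+m-2)$ on the link. When $\beta_i\notin\mathcal{D}_i$ the operator $I_\gamma$ is invertible on the contour $\{\operatorname{Re}\gamma=\beta_i\}$, and a contour-shift/parametrix argument yields a semi-Fredholm estimate $\|u\|_{W^p_{k,\beta_i}}\le C\|\Delta_{\tg_i}u\|_{W^p_{k-2,\beta_i-2}}+C\|u\|_{W^p_k(\mathrm{collar})}$ for $u$ supported in the end. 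Patching with the interior estimate gives the global estimate $\|u\|_{W^p_{k,\boldsymbol\beta}}\le C\|\Delta_g u\|_{W^p_{k-2,\boldsymbol\beta-2}}+C\|u\|_{L^p(K)}$, which together with compactness of $W^p_{k,\boldsymbol\beta}\hookrightarrow L^p(K)$ makes $\ker\Delta_g$ finite-dimensional with closed range. For the cokernel I would use self-adjointness of $\Delta_g$ and the pairing induced by $\rho^{-m}\vol_g$: the cokernel is the kernel of $\Delta_g$ on the conjugate space $W^{q}_{2-k,\boldsymbol\beta^*}$, $1/p+1/q=1$, where $\boldsymbol\beta^*$ is the image of $\boldsymbol\beta$ under $\gamma\mapsto 2-m-\gamma$; by the invariance noted above $\boldsymbol\beta^*\notin\mathcal{D}$, so the previous step applied to the adjoint bounds the cokernel, and $\Delta_g$ is Fredholm.

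\emph{Non-Fredholmness for $\boldsymbol\beta\in\mathcal{D}$.} Suppose $\beta_j$ is exceptional for the $j$-th end, realized by a homogeneous harmonic function $u_0=r^{\beta_j}\sigma(\theta)$ on the model cone. One option is to produce a sequence $u_n=\chi_n u_0$ with cutoffs $\chi_n$ sweeping ever deeper into the end over more and more dyadic scales: because $\beta_j$ is exactly the borderline rate, $\|u_n\|_{W^p_{k,\boldsymbol\beta}}\to\infty$ while $\|\Delta_g u_n\|_{W^p_{k-2,\boldsymbol\beta-2}}$ and $\|u_n\|_{L^p(K)}$ stay bounded, contradicting the semi-Fredholm estimate above. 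More structurally, one invokes the Lockhart--McOwen relative index theorem: the index of $\Delta_g$ is constant on each component of $\R^e\setminus\mathcal{D}$ and jumps by $\mp\,m^j(\beta_j)\ne 0$ across the hyperplane $\{\gamma_j=\beta_j\}$; were $\Delta_g$ Fredholm at some point of $\mathcal{D}$, continuity of the Fredholm index under the variation of weights would force equal indices on the two sides, a contradiction. Either way $\boldsymbol\beta\in\mathcal{D}$ excludes Fredholmness, completing the equivalence.

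\emph{Main obstacle.} The crux is the model estimate on a conical end --- the Mellin-transform/contour-shift argument pinning the non-Fredholm weights to the poles of $I_\gamma^{-1}$ --- together with the relative index input for the converse; the rest (localization, interior elliptic regularity, Rellich compactness, adjoint bookkeeping) is routine. This is precisely the analysis carried out in \cite{pacini:ics}, which in the body of the paper one simply cites.
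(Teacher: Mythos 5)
Your sketch is correct and is precisely the standard Lockhart--McOwen argument (indicial family on the link, model estimates on the conical ends, Rellich on the compact piece, duality with the conjugate weight $2-m-\boldsymbol{\beta}$, and a Weyl-sequence or relative-index argument at exceptional weights) that the cited reference \cite{pacini:ics} carries out; the paper itself offers no proof and simply quotes that reference. No gaps to report.
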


The Fredholm index of $\Delta_g$ is constant under small perturbations of the weight. When the weight crosses an exceptional weight, however, the index changes according to a ``change of index formula'' which again depends only on the asymptotics. The following corollary is extracted, as a special case, from \cite{pacini:ics}.

\begin{corollary}\label{cor:laplaceresults}
Let $(L,g)$ be a conifold with non-exceptional weight $\boldsymbol{\beta}$. Let $\beta_i$ denote the value of $\boldsymbol{\beta}$ on the $i$-th end of $L$. Consider the map
\begin{equation*}
\Delta_g:W^p_{k,\boldsymbol{\beta}}(L)\rightarrow
W^p_{k-2,\boldsymbol{\beta}-2}(L).
\end{equation*}
We distinguish three cases:
\begin{enumerate}
 \item Assume $L$ is an AC manifold. If $\boldsymbol{\beta}>2-m$ then this map is surjective. If
$\boldsymbol{\beta}<0$ then this map is injective, so for
$\boldsymbol{\beta}\in (2-m,0)$ it is an isomorphism. 
\item Assume $L$ is a CS manifold with $e$ ends. 
If $\boldsymbol{\beta}>2-m$ then the kernel of this map contains at most the constant functions $\R$ so if, for some $i$, $\beta_i>0$ then this map is injective.
If $\boldsymbol{\beta}>0$ then this map is injective and 
\begin{equation*}
\mbox{dim(Coker($\Delta_g$))}=e+\sum_{0<\boldsymbol{\gamma}<\boldsymbol{\beta}}
m(\boldsymbol{\gamma}),
\end{equation*}
where $m(\boldsymbol{\gamma})$ is as in Definition \ref{def:exceptionalweights}.
\item Assume $L$ is a CS/AC manifold with $s$ CS ends and $l$ AC ends. 
If $\beta_i>2-m$ for all CS ends and $\beta_i<0$ for all AC ends then this map is injective. In particular, if $\boldsymbol{\beta}\in (2-m,0)$ then this map is an isomorphism. 
If instead $\beta_i>0$ for all CS ends and $\beta_i\in (2-m,0)$ for all AC ends then it is injective and 
\begin{equation*}
\mbox{dim(Coker($\Delta_g$))}=s+\sum_{0<\gamma_i<\beta_i}
m^i(\gamma_i),
\end{equation*}
where the sum is over all CS ends and $m^i(\gamma_i)$ is as in Definition \ref{def:exceptionalweights}.
\end{enumerate}
\end{corollary}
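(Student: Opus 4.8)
The plan is to combine the Fredholm theory of Theorem~\ref{thm:laplaceresults} with a direct analysis of the kernel --- via the maximum principle, removable singularities and Liouville-type arguments --- and of the cokernel --- via $L^2$-duality, which reduces it to a kernel at a ``dual'' weight. First I would observe that in each case the weight $\boldsymbol{\beta}$ is non-exceptional: this is hypothesized whenever $\beta_i>0$, and on the ranges $(2-m,0)$ it is automatic, since the links $\Sigma_i$ are connected, so by \eqref{eq:exceptionalforlaplacian} the smallest positive exceptional value on each end is strictly positive and the largest negative one is strictly below $2-m$. Hence $\Delta_g:W^p_{k,\boldsymbol{\beta}}(L)\to W^p_{k-2,\boldsymbol{\beta}-2}(L)$ is Fredholm throughout, and it remains to identify its kernel and cokernel.

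\emph{The kernel.} Let $u\in\ker\Delta_g$. By elliptic regularity and Theorem~\ref{thm:embedding}, $u$ is smooth and lies in $C^0_{\boldsymbol{\beta}}$, so $|u|=O(r^{\beta_i})$ along the $i$-th end. If $L$ is AC and $\boldsymbol{\beta}<0$, then $u\to 0$ at infinity, hence $\sup_L|u|$ is attained at an interior point and the strong maximum principle forces $u$ to be locally constant, so $u\equiv 0$. If $L$ is CS and $\boldsymbol{\beta}>2-m$, then near each singular point $x_i$ the metric is scaled-equivalent to the conical metric $\tg_i$ (Remark~\ref{rem:equivalentscaledmetrics}), so $u$ has an asymptotic expansion whose leading term is a homogeneous harmonic on the cone; since there are no exceptional weights in $(2-m,0)$ the leading exponent is $\geq 0$, so $u$ is bounded near $x_i$. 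As $m\geq 3$ a point has zero capacity, so each $x_i$ is removable and $u$ extends to a function on the compact space $\bar L$ which is harmonic away from the $x_i$; the maximum principle then shows $u$ is locally constant, and if $\beta_i>0$ for some $i$ the constant value on that component vanishes, giving $u\equiv 0$ there. The CS/AC case is the combination: $u$ is bounded near the CS points and decays along the AC ends, so after extending across the CS points it is a harmonic function on a manifold with AC ends tending to $0$ at infinity, hence $u\equiv 0$ by the maximum principle; in particular the map is injective when $\boldsymbol{\beta}\in(2-m,0)$.

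\emph{The cokernel.} With respect to the pairing $\langle u,v\rangle=\int_L uv\,\vol_g$ one has $(W^p_{k,\boldsymbol{\beta}})^*\cong W^q_{-k,-m-\boldsymbol{\beta}}$, $1/p+1/q=1$, and $\Delta_g$ is formally self-adjoint for this pairing with no boundary term when the weights are complementary in this sense, so
\begin{equation*}
\operatorname{Coker}\left(\Delta_g:W^p_{k,\boldsymbol{\beta}}\to W^p_{k-2,\boldsymbol{\beta}-2}\right)\cong\ker\left(\Delta_g:W^q_{2-k,2-m-\boldsymbol{\beta}}\to W^q_{-k,-m-\boldsymbol{\beta}}\right),
\end{equation*}
and by elliptic regularity the right-hand side is a fixed finite-dimensional space of smooth harmonic functions. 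If $L$ is AC and $\boldsymbol{\beta}>2-m$ the dual weight is negative, so this kernel is trivial by the previous step and the map is surjective; with injectivity for $\boldsymbol{\beta}<0$ this gives the isomorphism on $(2-m,0)$. If $L$ is CS and $\boldsymbol{\beta}>0$ the dual weight lies below $2-m$, so the kernel consists of harmonic functions on $L$ allowed a pole of order at most $r^{2-m-\beta_i}$ at each $x_i$; I would evaluate it starting at the threshold $(2-m)^-$, where it is spanned by the constants together with combinations $\sum_i c_i\,G(\cdot,x_i)$ of Green's functions subject to the flux condition $\sum_i c_i=0$ --- dimension $e$ when $L$ is connected --- and then decreasing the weight to $2-m-\boldsymbol{\beta}$, using the change-of-index formula of \cite{pacini:ics} to pick up $m(\gamma')$ new solutions at each exceptional weight $\gamma'\in(2-m-\boldsymbol{\beta},2-m)$. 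By the symmetry $\gamma\leftrightarrow 2-m-\gamma$ of the exceptional weights in \eqref{eq:exceptionalforlaplacian} this accounts for exactly $\sum_{0<\boldsymbol{\gamma}<\boldsymbol{\beta}}m(\boldsymbol{\gamma})$, giving $\dim\operatorname{Coker}=e+\sum_{0<\boldsymbol{\gamma}<\boldsymbol{\beta}}m(\boldsymbol{\gamma})$. For CS/AC the argument runs end by end: on the AC ends the dual weight stays in $(2-m,0)$, contributing no new solutions, while the constants are now inadmissible as they do not decay along the AC ends; the flux of the $s$ poles escapes through the AC ends, removing the balance condition, so the threshold space has dimension $s$ and one obtains $s+\sum_{0<\gamma_i<\beta_i}m^i(\gamma_i)$, the sum over CS ends.

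\emph{Main obstacle.} The duality and Fredholm machinery, the asymptotic expansions near conical ends, and the precise change-of-index formula are standard in this context and may be quoted from \cite{pacini:ics}; the maximum-principle arguments are elementary. The genuinely delicate point is the dimension count at the threshold weight $(2-m)^-$: one must recognize that the space of harmonic functions with a prescribed $r^{2-m}$ pole at the conical singularities is governed by a Green's-function flux balance, and this is exactly what distinguishes the constant ``$e$'' in the closed CS case (constants present, poles constrained by $\sum c_i=0$) from ``$s$'' in the CS/AC case (constants absent, flux absorbed at the AC ends).
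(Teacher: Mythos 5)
The paper does not actually prove this statement: it is quoted verbatim as a special case of the results in \cite{pacini:ics}, so there is no in-text argument to compare against. What you have written is, in substance, the standard Lockhart--McOwen-type derivation that the cited reference carries out: Fredholmness away from $\mathcal{D}$, kernel identification by asymptotic expansions plus a Liouville argument, cokernel via the $L^2$-duality $\operatorname{Coker}\cong\ker(\Delta_g$ at weight $2-m-\boldsymbol{\beta})$, and the dimension count by walking the dual weight down from the threshold $(2-m)^-$ with the change-of-index formula, using the symmetry $\gamma\leftrightarrow 2-m-\gamma$ of \eqref{eq:exceptionalforlaplacian} and the vanishing of the dual cokernel (which follows from the injectivity you establish first, so there is no circularity). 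Your observation that $(2-m,0)$ is automatically free of exceptional weights, and your identification of the threshold kernel as constants plus Green's functions with balanced flux (dimension $e$) versus unconstrained poles when the flux can escape through AC ends (dimension $s$), are exactly the right mechanisms behind the two cokernel formulas; the arithmetic checks out against the index jump $m(2-m)=e$ at the crossing.

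One point is looser than it should be: in the CS case you invoke ``removable singularity'' and the maximum principle on $\bar L$, but $x_i$ is a genuine cone point, not a smooth point of a manifold, so the classical removable-singularity statement and the strong maximum principle do not apply there verbatim (the supremum could a priori be attained at $x_i$). The standard patch is either the energy argument --- $u$ bounded with $du=O(r^{\gamma-1})$, $\gamma>0$, gives $\int_L|du|^2=-\lim_{\epsilon\to0}\int_{\partial L_\epsilon}u\,\partial_\nu u=0$ --- or the observation that the subleading term $r^\gamma\sigma(\theta)$ in the expansion has $\sigma$ of mean zero on the connected link, hence changes sign. Either repair is one line, and both are available from the asymptotic-expansion machinery you are already quoting, so I would not call this a gap in the proof, only in the write-up.
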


\subsection{Deformations of Lagrangian conifolds}\label{ss:conifold_defs}
We now need to review the deformation theory of Lagrangian conifolds, following \cite{joyce:I}, \cite{pacini:sldefsextended}. It is useful to do this in several steps.

\subsubsection*{First case: smooth compact Lagrangian submanifolds} Let $L$ be a compact manifold and $\iota:L\rightarrow \C^m$ a Lagrangian immersion. Let $T^*L$ be the cotangent bundle of $L$, endowed with its natural symplectic structure $\hat{\omega}$. We can identify $L$ with the zero-section in $T^*L$. It is well-known that one can build an open neighbourhood $\mathcal{U}\subset T^*L$ of $L$ and a symplectomorphism
\begin{equation}\label{eq:cptlagmap}
 \Phi_L:\mathcal{U}\rightarrow\C^m
\end{equation}
restricting to $\iota$ on $L$. Let $C^\infty(\mathcal{U})$ denote the space of sections of $T^*L$ whose image lies in $\mathcal{U}$. Up to reparametrization, the moduli space of Lagrangian immersions ``close'' to $\iota$ is then parametrized by the space of closed $1$-forms on $L$ whose image lies in $\mathcal{U}$.

\subsubsection*{Second case: graphs over Lagrangian cones} Assume $L=\Sigma\times(0,\infty)$ and the image of $\iota$ is a cone $\mathcal{C}$ in $\C^m$. We will identify $L$ with $\mathcal{C}$ and denote its generic point by $(\theta,r)$. The generic point in $T^*\mathcal{C}$ is then of the form $(\theta,r,\alpha_1+\alpha_2\,dr)$, where $\alpha_1\in T_\theta^*\Sigma$ and $\alpha_2\in\R$. There is a natural action of $\R^+$ on $T^*\mathcal{C}$ defined by
\begin{equation}\label{eq:action}
\R^+\times T^*\mathcal{C}\rightarrow T^*\mathcal{C},\ \
t\cdot(\theta,r,\alpha_1+\alpha_2\,dr):=(\theta,tr,t^2\alpha_1+t\alpha_2\,dr).
\end{equation}
One can then build an open neighbourhood $\mathcal{U}\subset T^*\mathcal{C}$ of $\mathcal{C}$ which is invariant under this action and a symplectomorphism 
\begin{equation}\label{eq:conelagmap}
 \Phi_{\mathcal{C}}:\mathcal{U}\rightarrow\C^m
\end{equation}
which is $\R^+$-equivariant and restricts to the identity on $\mathcal{C}$. The map $\Phi_{\mathcal{C}}$ is obtained as a perturbation of an explicitly defined map $\Psi_{\mathcal{C}}$ which is linear along the fibres of $T^*\mathcal{C}$; specifically,  
 $\Phi_{\mathcal{C}}=\Psi_{\mathcal{C}}+R$,
where $\Psi_{\mathcal{C}}$ and $R$ are both $\R^+$-invariant and $R$ satisfies 
\begin{equation}\label{eq:R_properties}
|R(\theta,1,\alpha_1,\alpha_2)|=O(|\alpha_1|^2_{g'}+|\alpha_2|^2),\ \ \mbox{as } |\alpha_1|_{g'}+|\alpha_2|\rightarrow 0.
\end{equation}
 These properties lead to the following facts:
\begin{itemize}
\item Consider a fibre $T^*_{(\theta,r)}\mathcal{C}$ of $T^*\mathcal{C}$, endowed with the metric induced by $g$. Set $\mathcal{U}_{(\theta,r)}:=\mathcal{U}\cap T^*_{(\theta,r)}\mathcal{C}$. One can then assume that $\mathcal{U}_{(\theta,r)}$ is an open ball in $T^*_{(\theta,r)}\mathcal{C}$ of radius $Cr$, for some $C>0$. 
\item Choose rates $(\mu,\lambda)$. Let $C^\infty_{(\mu-1,\lambda-1)}(\mathcal{U})$ denote the space of sections of $T^*\mathcal{C}$ whose image lies in $\mathcal{U}$. Up to reparametrization, the set of Lagrangian immersions ``close'' to $\iota$ and asymptotic to $\mathcal{C}$ with rate $(\mu,\lambda)$ is then parametrized by the space of closed $1$-forms in $C^\infty_{(\mu-1,\lambda-1)}(\mathcal{U})$.
\end{itemize}
More generally, assume a Lagrangian conifold in $\C^m$ with rate $(\mu,\lambda)$ is obtained via an immersion $\iota:=\Phi_{\mathcal{C}}\circ\alpha:\mathcal{C}\rightarrow \C^m$ for some closed $1$-form $\alpha$ in $C^\infty_{(\mu-1,\lambda-1)}(\mathcal{U})$. Using the symplectomorphism
\begin{equation}\label{eq:translation}
 \tau_{\alpha}:T^*\mathcal{C}\rightarrow T^*\mathcal{C}, \ \ \tau_{\alpha}(\theta,r,\eta):=(\theta,r,\eta+\alpha(\theta,r)),
\end{equation}
we can define a Lagrangian
neighbourhood for $\iota$ by setting
\begin{gather}\label{eq:Lgraphmap}
\nonumber\tau_{\alpha}^{-1}(\mathcal{U}):=\{(\theta,r,\eta)\in T^*\mathcal{C}:(\theta,r,\eta+\alpha)\in\mathcal{U}\},\\
 \Phi_L:=\Phi_\mathcal{C}\circ\tau_\alpha:\tau_{\alpha}^{-1}(\mathcal{U})\subset T^*\mathcal{C}\rightarrow\C^m.
\end{gather}
Notice that the zero-section is contained in $\tau_{\alpha}^{-1}(\mathcal{U})$ and that $\Phi_{L}$, restricted to the zero-section, coincides with $\iota$. The Lagrangian deformations of $\iota$, asymptotic to $\mathcal{C}$ with rate $(\mu,\lambda)$, are then parametrized by closed $1$-forms in $C^\infty_{(\mu-1,\lambda-1)}(\tau_{\alpha}^{-1}(\mathcal{U}))$.

\subsubsection*{Third case: Lagrangian conifolds} In general, let $\iota:L\rightarrow \C^m$ be a Lagrangian conifold. For simplicity, let us assume that it is an AC Lagrangian submanifold, \textit{i.e.} that it has only AC ends $S_1,\dots,S_e$ with centers $p_i$ and rate $\boldsymbol{\lambda}$. Set $K:=L\setminus\cup S_i$. We will also simplify the notation
by identifying $\Sigma_i\times [R,\infty)$ with $\overline{S_i}$ via the diffeomorphisms
$\phi_i$. We can thus write 
\begin{equation*}
L=K\cup \left(\Sigma_i\times [R,\infty)\right),
\end{equation*}
where we identify the boundary of $K$ with
$\cup\left(\Sigma_i\times\{R\}\right)$. The map $\Phi_{\mathcal{C}_i}+p_i$ identifies $\iota(S_i)$ with the graph
$\Gamma(\alpha_i)$, for some (locally defined) closed 1-form $\alpha_i$ in
$C^\infty_{\lambda_i-1}(\mathcal{U})$. We then set
\begin{equation*}
\Phi_{S_i}:=\Phi_{\mathcal{C}_i}\circ \tau_{\alpha_i}+p_i:\tau_{\alpha_i}^{-1}(\mathcal{U})\subset T^*(\Sigma_i\times [R,\infty))\rightarrow\C^m. 
\end{equation*}
It is possible to interpolate between this data, obtaining an open neighbourhood $\mathcal{U}\subset T^*L$ of $L$ and a symplectomorphism
\begin{equation*}\label{eq:lagmap}
\Phi_L:\mathcal{U}\subset T^*L\rightarrow \C^m
\end{equation*}
which restricts to $\iota$ along $L$. By construction we can assume that the ``radius'' of $\mathcal{U}$, in the sense defined following Equation \ref{eq:R_properties}, is  linear with respect to $r$ on each end $S_i$. A similar construction works for general Lagrangian conifolds. Up to reparametrization, the set of Lagrangian immersions ``close'' to $\iota$ and asympotic to the same cones, with the same centers and rate $(\boldsymbol{\mu},\boldsymbol{\lambda})$, is then parametrized by the space of closed $1$-forms in $C^\infty_{(\boldsymbol{\mu}-1,\boldsymbol{\lambda}-1)}(\mathcal{U})$.

\subsubsection*{Lagrangian conifolds with moving singularities} Notice that our restrictions on $\mu_i$ imply that the above deformations always fix the position of the singularities and the corresponding asymptotic cones in $\C^m$. It is useful to also take into account deformations which allow the singularities to translate in $\C^m$ and the cones to rotate. The correct set-up for doing
this is as follows. 

Assume $\iota:L\rightarrow\C^m$ is a CS/AC Lagrangian submanifold with singularities
$\{x_1,\dots,x_s\}$. Define
\begin{equation*}
P:=\{(p,\upsilon):p\in \C^m,\ \upsilon\in \unitary m\}.
\end{equation*}
$P$ is a $\unitary m$-principal fibre bundle over $\C^m$ with the action 
$$\unitary m\times P\rightarrow P,\ \ M\cdot (p,\upsilon):=(p,\upsilon\circ
M^{-1}).$$
As such, $P$ is a smooth manifold of dimension $m^2+2m$.

Our aim is to use one copy of $P$ to parametrize the location of each singular
point $p_i=\iota(x_i)\in \C^m$ and the direction of the corresponding cone in $\C^m$. The element $(p_i,Id)$ will correspond to the initial positions. 
As we are interested only in small deformations of $L$ we can restrict
our attention to a small open neighbourhood of the pair $(p_i,Id)\in P$. In general the
$\cone_i$ will have some symmetry group $G_i\subset \unitary m$, \textit{i.e.}
the action of this $G_i$ will leave the cone fixed. To ensure that we have no
redundant parameters we must therefore further restrict our attention to a
\textit{slice} of our open neighbourhood, \textit{i.e.} a smooth submanifold
transverse to the orbits of $G_i$. We denote this slice $\E_i$: it is a subset
of $P$ containing $(p_i,Id)$ and of dimension $m^2+2m-\mbox{dim}(G_i)$.
We then set $\E:=\E_1\times\dots\times\E_s$ and
$e:=((p_1,Id),\dots,(p_s,Id))\in \E$.

It is possible to choose a family of CS/AC Lagrangian immersions
$\iota_{\tilde{e}}:L\rightarrow\C^m$ parametrized by
$\tilde{e}=((\tilde{p}_1,\tilde{v}_1),\dots,(\tilde{p}_s,\tilde{v}_s))\in
\E$ with the following features:
\begin{itemize}
\item $\iota_e=\iota$.
\item For each $\tilde{e}$, $\iota_{\tilde{e}}(x_i)=\tilde{p}_i$ with asymptotic cone $\tilde{v}_i(\mathcal{C}_i)$. Furthermore, $\iota_{\tilde{e}}=\iota$ outside a neighbourhood of the singularities.
\item Choose $\mathcal{U}$ and $\Phi_L$ as in Equation \ref{eq:lagmap}. Then, for each $\tilde{e}$, there are symplectomorphisms $\Phi_{L}^{\tilde{e}}:\mathcal{U}\rightarrow \C^m$ which restrict to $\iota_{\tilde{e}}$ on $L$ and such that $\Phi_L^{e}=\Phi_L$. 
\end{itemize}

The final result is that, after such a choice and up to reparametrization, the
set of CS/AC Lagrangian immersions ``close'' to $\iota$ with rate
$(\boldsymbol{\mu},\boldsymbol{\lambda})$ and moving singularities can be parametrized in terms of
pairs $(\tilde{e}, \alpha)$ where $\tilde{e}\in\E$ and $\alpha$ is a closed 1-form
on $L$ belonging to the space $C^\infty_{(\boldsymbol{\mu}-1,\boldsymbol{\lambda}-1)}(\mathcal{U})$. 

\begin{remark}\label{rem:phitilde}
In calculations it is useful to extend $\tilde{e}$ to a compactly-supported symplectomorphism of $\C^m$ which coincides with the corresponding element of $\unitary m\times\C^m$ in a neighbourhood of each point $p_i$. We can then define $\Phi^{\tilde{e}}_L:=\tilde{e}\circ\Phi_L$. This point of view makes certain operations more explicit. For future reference, we give the following example.

Let $\alpha\in\Lambda^k(\C^m)$ be a differential form on $\C^m$. Assume we want to study the smoothness of the pull-back operation
\begin{equation}\label{eq:pullback}
 \tilde{\E}\rightarrow\Lambda^k(\mathcal{U}), \ \ \tilde{e}\mapsto (\Phi^{\tilde{e}}_L)^*\alpha.
\end{equation}
Let $\mathcal{G}$ denote the infinite-dimensional Lie group of compactly-supported diffeomorphisms of $\C^m$, endowed with its natural Fr\'{e}chet structure. By construction, the tangent space $T_{Id}\mathcal{G}$ at the identity is the vector space of smooth compactly-supported vector fields on $\C^m$.
At any other point $\phi\in\mathcal{G}$, we can then identify $T_\phi\mathcal{G}$ as follows:
\begin{equation*}
T_{\phi}\mathcal{G}=\{X\circ\phi:X\in T_{Id}\mathcal{G}\}.
\end{equation*}
Endow $\Lambda^k(\C^m)$ with its natural Fr\'{e}chet structure. With respect to these structures, the pull-back operation on $k$-forms,
\begin{equation*}
\mathcal{G}\rightarrow\Lambda^k(\C^m),\ \ \phi\mapsto\phi^*\alpha,                                        
\end{equation*}
is a smooth map. Its derivatives can be written in terms of the Lie derivatives of $\alpha$. We will think of $\tilde{\E}$ as a finite-dimensional submanifold of $\mathcal{G}$, so that the restricted map is also smooth. Composing with $\Phi_L^*$, we obtain the smoothness of the map in Equation \ref{eq:pullback}.
\end{remark}

\begin{remark}\label{rem:defs_same} Let $(L,\iota)$ be a Lagrangian submanifold in $\C^m$. Recall from Example \ref{ex:smooth_is_sing} that any smooth point can be labelled as a singularity. The deformation theory presented above is set up so that both points of view allow the same degree of flexibility: in either case the point in question can be translated and its tangent plane can be rotated. Thus, from the deformation theory point of view it makes no difference whether we label self-intersection points as smooth or singular. 
\end{remark}

%%%%%%%%%%%%%%%%%%%%%%%%%%%%%%%%%%%%%%%%%%%%%%%%%%%%%%%%%%%%%%
%%%%%%%%%%%%%%%%%%%%%%%%%%%%%%%%%%%%%%%%%%%%%%%%%%%%%%%%%%%%%%

\section{Rescaled Lagrangian conifolds in $\C^m$}\label{s:rescaled_lag_cons}

Let $(L,g)$ be an abstract Riemannian conifold. Rescaling the metric yields new manifolds $(L,t^2g)$. The initially given diffeomorphisms between ends and asymptotic cones can also be rescaled, so each $(L,t^2g)$ is again a conifold: we refer to \cite{pacini:ics} for details. Our goal here is to define an analogous rescaling procedure for the category of Lagrangian conifolds in $\C^m$.

Let $\R^+$ act on $\C^m$ by dilations: $t\cdot x:=tx$. The induced action on
forms is such that $t^*\tg=t^2\tg$, $t^*\tilde{\omega}=t^2\tilde{\omega}$. This implies that if
$\iota:L\rightarrow\C^m$ is a Lagrangian submanifold of $\C^m$ then the rescaled maps $t\iota:L\rightarrow\C^m$
are also Lagrangian.

\begin{lemma}\label{l:rescaledlag}
 Let $\iota:L\rightarrow \C^m$ be a Lagrangian conifold with cones $\mathcal{C}_i$,
rate $(\boldsymbol{\mu},\boldsymbol{\lambda})$ and centers $p_i$. Let $\phi_i$ be the diffeomorphism corresponding to $\iota$ and the AC end $S_i$, as in Definition \ref{def:aclagsub}. Then, for $r\rightarrow\infty$, the diffeomorphism
\begin{equation*}
 \phi_{t,i}:\Sigma_i\times [tR,\infty)\rightarrow \overline{S_i},\ \
\phi_{t,i}(\theta,r):=\phi_i(\theta,r/t)
\end{equation*}
has the property
\begin{equation*}
 |\tilde{\nabla}^k(t\iota\circ\phi_{t,i}-(\iota_i+tp_i))|_\tg(\theta,r)=t^{2-\lambda_i}O(r^{\lambda_i-1-k}).
\end{equation*}
This property allows us to select it as the preferred diffeomorphism corresponding to $t\iota$ and the AC end $S_i$, as in Definition \ref{def:aclagsub}.

Analogously, let $\phi_i$ be the diffeomorphism corresponding to $\iota$ and the CS end $S_i$. Then, for $r\rightarrow 0$, the diffeomorphism
\begin{equation*}
 \phi_{t,i}:\Sigma_i\times (0,t\epsilon]\rightarrow \overline{S_i}\setminus\{x_i\},\
\ \phi_{t,i}(\theta,r):=\phi_i(\theta,r/t)
\end{equation*}
has the property
\begin{equation*}
 |\tilde{\nabla}^k(t\iota\circ\phi_{t,i}-(\iota_i+tp_i))|_\tg(\theta,r) 
=t^{2-\mu_i}O(r^{\mu_i-1-k}).
\end{equation*}
We can thus use it to parametrize the CS ends of $t\iota$, as in Definition \ref{def:cslagsub}.

We conclude that $t\iota:L\rightarrow\C^m$ is a Lagrangian conifold with the same cones and rate as $(L,\iota)$, and with centers $tp_i$.
\end{lemma}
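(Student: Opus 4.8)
The plan is to reduce everything to the defining estimate for $\iota$ itself, \eqref{eq:aclagdecay} (resp.\ \eqref{eq:cslagdecay}), and to exploit the homogeneity of the cone parametrizations $\iota_i$ under dilations of $\C^m$. First I would record the key scaling identities. The immersion $\iota_i:\Sigma_i\times(0,\infty)\to\C^m$ parametrizing the cone $\cone_i$ is positively homogeneous of degree $1$, i.e.\ $\iota_i(\theta,r)=r\,\iota_i(\theta,1)$, so that $t\cdot\iota_i(\theta,r/t)=\iota_i(\theta,r)$; this is precisely why $\phi_{t,i}$ is the ``right'' reparametrization. On the metric side, the conical metric $\tg_i=dr^2+r^2g_i'$ is scaled by the substitution $r\mapsto r/t$ in a controlled way: if $\delta_t(\theta,r)=(\theta,r/t)$ then $\delta_t^*\tg_i=t^{-2}\tg_i$. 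Hence if $T$ is a tensor of the type of $\tnabla^k(\,\cdot\,)$ (a section of $T^*C^{\otimes k}\otimes(\text{values})$ with the values being $\C^m$-valued, so carrying metric weight only from the $k$ covariant derivatives), one has the pointwise identity $|\delta_t^*T|_{\tg_i}(\theta,r)=t^{k}\,|T|_{\tg_i}(\theta,r/t)$, together with the naturality of the Levi-Civita connection, $\delta_t^*(\tnabla^k S)=\tnabla^k(\delta_t^*S)$ since $\delta_t$ is an isometry from $(C,t^{-2}\tg_i)$ to $(C,\tg_i)$ and these two metrics have the same connection.

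Next I would assemble the AC computation. By definition $t\iota\circ\phi_{t,i}=t\cdot(\iota\circ\phi_i\circ\delta_t^{-1})$ where I abuse notation writing $\delta_t^{-1}(\theta,r)=(\theta,tr)$ on the appropriate domain; more precisely $\phi_{t,i}(\theta,r)=\phi_i(\theta,r/t)$. So
\begin{equation*}
t\iota\circ\phi_{t,i}-(\iota_i+tp_i)=t\bigl[(\iota\circ\phi_i)(\theta,r/t)-(\iota_i(\theta,r/t)+p_i)\bigr],
\end{equation*}
using $t\cdot\iota_i(\theta,r/t)=\iota_i(\theta,r)$ and $t\cdot p_i=tp_i$. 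Applying $\tnabla^k$ and the norm $|\cdot|_{\tg_i}$, the scaling identities above give
\begin{equation*}
|\tnabla^k(t\iota\circ\phi_{t,i}-(\iota_i+tp_i))|_{\tg_i}(\theta,r)=t\cdot t^{k}\cdot|\tnabla^k(\iota\circ\phi_i-(\iota_i+p_i))|_{\tg_i}(\theta,r/t),
\end{equation*}
and the last factor is $O((r/t)^{\lambda_i-1-k})$ by \eqref{eq:aclagdecay} as $r/t\to\infty$, hence as $r\to\infty$ for fixed $t$. Collecting powers of $t$: $t^{1+k}\cdot t^{-(\lambda_i-1-k)}=t^{2-\lambda_i+2k}$\,---\,wait, I must be careful: the claimed exponent in the lemma is $t^{2-\lambda_i}$ with no $k$, so the $t^{k}$ from the derivative norm must be reabsorbed. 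Indeed the cleanest bookkeeping uses the scaled norm of Remark~\ref{rem:tensored_metrics}: rewriting \eqref{eq:aclagdecay} as $|\tnabla^k(\iota\circ\phi_i-(\iota_i+p_i))|_{r^{-2}\tg_i\otimes\tg_i}=O(r^{\lambda_i-1})$ with no $k$ in the exponent, and noting $\delta_t^*(r^{-2}\tg_i)=t^2(r^{-2}\tg_i)$ exactly cancels the extra derivative weights uniformly, one gets directly $|\cdots|_{r^{-2}\tg_i\otimes\tg_i}(\theta,r)=t^{2-\lambda_i}O(r^{\lambda_i-1})$, which converts back to $t^{2-\lambda_i}O(r^{\lambda_i-1-k})$ in the unscaled norm. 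So the honest strategy is to do the whole computation in the scaled norm and only convert at the end. The CS case is identical, replacing $r\to\infty$ by $r\to0$, $\lambda_i$ by $\mu_i$, $O(r^{\lambda_i-1-k})$ by $O(r^{\mu_i-1-k})$, and noting $\delta_t$ maps $(0,t\epsilon]$ to $(0,\epsilon]$ so the limit $r\to0$ is preserved.

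Finally, to conclude that $t\iota$ is a Lagrangian conifold with the same cones, same rate, and centers $tp_i$: Lagrangianity of $t\iota$ is already observed before the lemma ($t^*\tilde\omega=t^2\tilde\omega$). The estimates just derived show $t\iota$ satisfies Definitions~\ref{def:aclagsub} and \ref{def:cslagsub} with the data $(\Sigma_i,g_i')$, $\iota_i$ unchanged, rates $\boldsymbol\lambda$, $\boldsymbol\mu$ unchanged (since $t^{2-\lambda_i}$, $t^{2-\mu_i}$ are just positive constants, absorbed into the $O$), centers $tp_i$, and reparametrizations $\phi_{t,i}$; the compact core $K$ and the neighbourhoods $S_i$ are the same point-sets. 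Checking that $\phi_{t,i}$ is still a diffeomorphism onto $\overline{S_i}$ (resp.\ $\overline{S_i}\setminus\{x_i\}$) is immediate since it is $\phi_i$ precomposed with the diffeomorphism $\delta_t$ of the cone. I expect the only real subtlety\,---\,the ``main obstacle''\,---\,to be the careful tracking of the $t$-powers so that the extra $t^{k}$ coming from the $k$ covariant derivatives in the unscaled norm is correctly absorbed; this is handled transparently by working throughout with the scaled norm $|\cdot|_{r^{-2}\tg_i\otimes\tg_i}$ of Remark~\ref{rem:tensored_metrics}, where the scaling is uniform in $k$, and converting to the stated form only at the very end.
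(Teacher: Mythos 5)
Your strategy is exactly the paper's: pull everything back through the dilation $\delta_t(\theta,r)=(\theta,r/t)$, use the homogeneity $t\,\iota_i(\theta,r/t)=\iota_i(\theta,r)$ to reduce to the defining estimate \eqref{eq:aclagdecay}, and track the powers of $t$. The structure and the final answer are right, but two of your pointwise scaling identities have the wrong sign, and the second error is what makes your first error \emph{appear} to need a change of norm.

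First, for a $k$-covariant tensor $T$ and a constant $c>0$ one has $|T|_{c^2 g}=c^{-k}|T|_g$ (each slot is contracted with the inverse metric $c^{-2}g^{-1}$). Since $\delta_t^*\tg_i=t^{-2}\tg_i$, the correct identity is
\begin{equation*}
|\delta_t^*T|_{\tg_i}(\theta,r)=|T|_{(\delta_t^{-1})^*\tg_i}(\theta,r/t)=|T|_{t^{2}\tg_i}(\theta,r/t)=t^{-k}\,|T|_{\tg_i}(\theta,r/t),
\end{equation*}
i.e.\ $t^{-k}$, not the $t^{k}$ you wrote. With the correct sign your unscaled computation closes immediately: $t\cdot t^{-k}\cdot O((r/t)^{\lambda_i-1-k})=t^{1-k}\,t^{-(\lambda_i-1-k)}O(r^{\lambda_i-1-k})=t^{2-\lambda_i}O(r^{\lambda_i-1-k})$, which is precisely the chain of equalities in the paper's proof; no detour through the scaled norm is needed. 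Second, your rescue via Remark \ref{rem:tensored_metrics} rests on the claim $\delta_t^*(r^{-2}\tg_i)=t^2(r^{-2}\tg_i)$, which is also wrong: since $\delta_t^*r^{-2}=t^2r^{-2}$ and $\delta_t^*\tg_i=t^{-2}\tg_i$, the two factors cancel and $\delta_t^*(r^{-2}\tg_i)=r^{-2}\tg_i$ — the cylindrical metric is dilation-\emph{invariant}. That invariance (and not a $t^2$ factor ``cancelling derivative weights'') is the honest reason the scaled-norm bookkeeping carries no $k$-dependent power of $t$, and with it your scaled-norm argument does give $t^{2-\lambda_i}O(r^{\lambda_i-1})$ correctly. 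So the proof is salvageable along either of your two routes once the signs are fixed; the remaining points (Lagrangianity of $t\iota$, the CS case by the same substitution, $\phi_{t,i}$ being a diffeomorphism) are handled as you say.
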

\begin{proof}
 On any AC end $S_i$,
\begin{align*}
 |\tilde{\nabla}^k(t\iota\circ\phi_{t,i}-(\iota_i+tp_i))|_\tg(\theta,r)
 &=|\tilde{\nabla}^k(t\iota\circ\phi_{t,i}-(\iota_i+tp_i))|_{t^2(t^{-1})^*\tg}(\theta,r)\\
 &=t^{-k}|\tilde{\nabla}^k(t\iota\circ\phi_{t,i}-(\iota_i+tp_i))|_{(t^{-1})^*\tg}(\theta,r)\\
 &=t^{-k}\left((t^{-1})^*(|t^*\tilde{\nabla}^k(t\iota\circ\phi_{t,i}-(\iota_i+tp_i))|_\tg)\right)(\theta,r)\\
&=t^{-k}(|\tilde{\nabla}^kt^*(t\iota\circ\phi_{t,i}-(\iota_i+tp_i))|_\tg)(\theta,r/t)\\
&=t^{-k}|\tilde{\nabla}^k(t\iota\circ\phi_i-(t\iota_i+tp_i))|_\tg(\theta,r/t)\\
&=t^{1-k}O((r/t)^{\lambda_i-1-k})\\
&=t^{2-\lambda_i}O(r^{\lambda_i-1-k}).
\end{align*}
The proof for CS ends is analogous.
\end{proof}

One can check that the above lemma implies for instance that, as $r\rightarrow\infty$,
\begin{equation*}
|\tnabla^k(\phi_{t,i}^*g-\tg)|=t^{2-\lambda_i}O(r^{\lambda_i-2-k}).
%\footnote{On AC ends the metric improves as $t\rightarrow 0$. Worsens on CS. We're rescaling only AC. Notice that factor $t^{2-\lambda_i}$ reappears whenever we do a rescaling.}
\end{equation*}
We now want to define Lagrangian neighbourhoods for the conifolds $t\iota$. We will do this in various stages, as
follows.

\subsubsection*{First case: smooth compact Lagrangian submanifolds}
\label{ss:rescaledcptlagdefs}

It is useful to start by considering how Lagrangian neighbourhoods can be built
for $t\iota$ when $L$ is smooth and compact.

Consider the action of $\R^+$ on the manifold $T^*L$ defined by
\begin{equation}\label{eq:fibreaction}
t\cdot(x,\alpha):=(x,t^2\alpha).
\end{equation}
It is simple to check that the induced action on forms is such that
$t^*\hat{\omega}=t^2\hat{\omega}$. Let $\mathcal{U}$, $\Phi_L$ be as in Equation \ref{eq:cptlagmap}.
Now define 
\begin{equation}\label{eq:tLcptlagmap}
 \Phi_{t,L}:t\mathcal{U}\rightarrow\C^m,\ \ \Phi_{t,L}:=t\Phi_Lt^{-1}.
\end{equation}
One can check that $\Phi_{t,L}$ is a symplectomorphism and that, restricted to
the zero-section $L$, it coincides with $t\iota$. Thus Equation \ref{eq:tLcptlagmap}
defines a Lagrangian neighbourhood for $t\iota$. 

Notice that if $|\alpha|_g=C$ then $|t^2\alpha|_{t^2g}=Ct$. In this sense, the ``radius'' of $t\mathcal{U}$ is linear in $t$.

\subsubsection*{Second case: graphs over Lagrangian
cones}\label{ss:rescaledconelagdefs}

Let $\mathcal{C}$ be a Lagrangian cone in $\C^m$ and
$\Phi_\mathcal{C}:\mathcal{U}\subset T^*\mathcal{C}\rightarrow \C^m$ be the
symplectomorphism defined in Equation \ref{eq:conelagmap}. Assume $L$ is a
Lagrangian conifold in $\C^m$ with rate
$(\mu,\lambda)$ obtained via an immersion 
$\iota:=\Phi_\mathcal{C}\circ\alpha:\mathcal{C}\rightarrow\C^m$, for some closed
1-form $\alpha=\alpha_1(\theta,r)+\alpha_2(\theta,r)\, dr$ in
$C^\infty_{(\mu-1,\lambda-1)}(\mathcal{U})$. Set
\begin{equation}\label{eq:alphat}
\alpha_t(\theta,r):=t^2\alpha_1(\theta,r/t)+t\alpha_2(\theta,r/t)\, dr.
\end{equation}
Then
\begin{align*}
 \Phi_\mathcal{C}\circ\alpha_t(\theta,r)
&=\Phi_\mathcal{C}(\theta,r,t^2\alpha_1(\theta,r/t)+t\alpha_2(\theta,r/t)\,dr)\\
&=\Phi_\mathcal{C}(t\cdot(\theta,r/t,\alpha_1(\theta,r/t)+\alpha_2(\theta,r/t)\,
dr))\\
&=t\Phi_\mathcal{C}(\theta,r/t,\alpha_1(\theta,r/t)+\alpha_2(\theta,r/t)\,
dr)=t\iota(\theta,r/t),
\end{align*}
where we use the equivariance of $\Phi_\mathcal{C}$ with respect to the $\R^+$-action defined in Equation \ref{eq:action}. This shows that, up to dilations, the rescaled
Lagrangian $t\iota$ can be written as $\Phi_\mathcal{C}\circ\alpha_t$. The results
of Section \ref{ss:conifold_defs} and Lemma \ref{l:rescaledlag} imply that
$\alpha_t$ is a closed 1-form in the space
$C^\infty_{(\mu-1,\lambda-1)}(\mathcal{U})$, though
this could also be checked by direct computation. We note, in passing, that
\begin{align*}
 |\alpha_t|^2_{\tg}(\theta,r)&=t^4|\alpha_1(\theta,r/t)|^2_{r^2g'}+t^2|\alpha_2(\theta,r/t)|^2\\
&=t^2\left(|\alpha_1(\theta,r/t)|^2_{(r/t)^2g'}+|\alpha_2(\theta,r/t)|^2\right)\\
&=t^2|\alpha|^2_\tg(\theta,r/t).
\end{align*}
Analogously to Section \ref{ss:conifold_defs}, we can define a Lagrangian neighbourhood for $t\iota$ by setting
\begin{equation*}\label{eq:tLgraphmap}
 \Phi_{t,L}:=\Phi_\mathcal{C}\circ\tau_{\alpha_t}:\tau_{\alpha_t}^{-1}(\mathcal{U})\subset T^*\mathcal{C}\rightarrow\C^m.
\end{equation*}
 
\subsubsection*{Third case: Lagrangian conifolds}\label{ss:rescaledaccslagdefs}

Let $\iota:L\rightarrow\C^m$ be a Lagrangian conifold. As in Section \ref{ss:conifold_defs}, for simplicity we assume that $L$
is an AC Lagrangian submanifold and we use the notation/identifications
\begin{equation*}\label{eq:Ldecomp}
L=K\cup \left(\Sigma_i\times [R,\infty)\right),
\end{equation*}
where we identify the boundary of $K$ with
$\cup\left(\Sigma_i\times\{R\}\right)$. Recall from Section \ref{ss:conifold_defs} that $\Phi_L$ restricts to $\Phi_{S_i}=\Phi_{\mathcal{C}_i}\circ \tau_{\alpha_i}+p_i$ on the bundle $\mathcal{U}$ over $\Sigma_i\times [R,\infty)$ and to some $\Phi_K$ on the bundle $\mathcal{U}$ over $K$. Notice that
\begin{equation}\label{eq:phicoincides}
\Phi_K=\Phi_{S_i} \ \  \mbox{on $T^*(\Sigma_i\times \{R\})$}.
\end{equation}
Let us now set
\begin{equation*}\label{eq:tLdecomp}
 tL:=K\cup\left(\Sigma_i\times [tR,\infty)\right),
\end{equation*}
again making an identification along the boundary: given that $\partial
K=\cup\left(\Sigma_i\times\{R\}\right)$, this requires identifying
$\cup\left(\Sigma_i\times\{R\}\right)\simeq\cup\left(\Sigma_i\times\{tR\}
\right)$ via rescaling. Define
\begin{equation*}\label{eq:tLdecompmap}
 \Phi_{t,K}:=t\Phi_Kt^{-1}: T^*K\rightarrow \C^m.
\end{equation*}
Define $\alpha_{t,i}$ from $\alpha_i$
as above and set
\begin{equation*}\label{eq:tLdecompmapbis}
\Phi_{t,S_i}:=\Phi_{\mathcal{C}_i}\circ\tau_{\alpha_{t,i}}+tp_i:\tau_{\alpha_{t,i}}^{-1}(\mathcal{U})\subset T^*(\Sigma_i\times
[tR,\infty))\rightarrow\C^m.
\end{equation*}
We now want to show that the maps
$\Phi_{t,K}$, $\Phi_{t,S_i}$ coincide on the common boundary. This is a simple
calculation, requiring only a bit of care to take into account the different
$\R^+$-actions used in Equations \ref{eq:action} and
\ref{eq:fibreaction}, as follows.

Choose $x\in\partial K$ and $\eta_x\in T^*_x(tL)$. On the one hand we are using
the identification $\partial K\simeq\cup\left(\Sigma_i\times\{R\}\right)$, so
for some $\theta\in\Sigma_i$ we get $x\simeq (\theta,R)$ and
$(x,\eta_x)\simeq(\theta,R,\eta_1(\theta,R)+\eta_2(\theta,R)\,dr)$. To simplify
the notation, set $\alpha:=\alpha_i$. Then, using Equation
\ref{eq:phicoincides}, 
\begin{align*}
\Phi_{t,K}(x,\eta_x)
&=t\Phi_K(\theta,R,t^{-2}\eta_1(\theta,R)+t^{-2}\eta_2(\theta,R)dr)\\
&=t\Phi_{\mathcal{C}_i}\circ\tau_\alpha(\theta,R,t^{-2}\eta_1(\theta,R)+t^{-2}
\eta_2(\theta,R)dr)+tp_i\\
&=\Phi_{\mathcal{C}_i}t(\theta,R,(\alpha_1+t^{-2}\eta_1)_{|(\theta,R)}
+(\alpha_2+t^{-2}\eta_2)_{|(\theta,R)}dr)+tp_i\\
&=\Phi_{\mathcal{C}_i}(\theta,tR,(t^2\alpha_1+\eta_1)_{|(\theta,R)}
+(t\alpha_2+t^{-1}\eta_2)_{|(\theta,R)}dr)+tp_i.
\end{align*}
On the other hand we are using the identification $\partial
K\simeq\cup\left(\Sigma_i\times\{tR\}\right)$. The two identifications are
related by a dilation so now
$(x,\eta_x)\simeq(\theta,tR,\eta_1(\theta,R)+t^{-1}\eta_2(\theta,R)\,dr)$. Set
$\alpha_t:=\alpha_{t,i}$. Then
\begin{align*}
 \Phi_{t,S_i}(x,\eta_x)
&=\Phi_{\mathcal{C}_i}\circ\tau_{\alpha_t}(\theta,tR,\eta_1(\theta,R)+t^{-1}
\eta_2(\theta,R)\,dr)+tp_i\\
&=\Phi_{\mathcal{C}_i}(\theta,tR,(t^2\alpha_1+\eta_1)_{|(\theta,R)}
+(t\alpha_2+t^{-1}\eta_2)_{|(\theta,R)}dr)+tp_i.
\end{align*}
This calculation proves that the two maps can be glued together along the common
boundary, obtaining a well-defined symplectomorphism
\begin{equation}\label{eq:tLmap}
\Phi_{t,L}:\mathcal{U}_t\subset T^*(tL)\rightarrow \C^m.
\end{equation}
This map restricts to $t\iota$ on the zero-section $tL$ because this is true
for both maps $\Phi_{t,K}$, $\Phi_{t,S_i}$. We have thus constructed a
Lagrangian neighbourhood for any rescaled AC Lagrangian submanifold $t\iota$. The
construction for rescaled Lagrangian conifolds is similar.

%%%%%%%%%%%%%%%%%%%%%%%%%%%%%%%%%%%%%%%%%%%%%%%%%%%%%%%%%%%%%%
%%%%%%%%%%%%%%%%%%%%%%%%%%%%%%%%%%%%%%%%%%%%%%%%%%%%%%%%%%%%%%

\section{Connect sums of Lagrangian conifolds}\label{s:lagr_sum}

Let $(L,g)$ be an abstract Riemannian conifold. Choose a singular point $x\in
\bar{L}$ with cone $C$. Assume we are given a second
conifold $(\hat{L},\hat{g})$ containing an AC end asymptotic to the same cone
$C$. For small $t>0$ one can
build a family of new conifolds $(L_t,g_t)$ by gluing the rescaled
manifold $(\hat{L},t^2\hat{g})$ into a neighbourhood of $x$. Notice that the construction is not symmetric in its
initial data $L$, $\hat{L}$: in particular only one of these is rescaled. 
If the gluing is done appropriately then one can show that the family $(L_t,g_t)$ satisfies certain analytic and elliptic estimates uniformly in $t$. An
analogous construction holds for any number of singularities $x_i\in\bar{L}$,
provided $\hat{L}$ contains the same number of appropriate AC ends. We refer to \cite{pacini:ics} for details.

We now want to define an analogous ``connect sum'' procedure for the category of Lagrangian conifolds.
The main additional assumption
required by this construction will be that the AC ends of $\hat{L}$ used in the gluing have rate
$\hat{\boldsymbol{\lambda}}<0$.

\begin{definition}\label{def:marking}
Let $\iota:L\rightarrow\C^m$ be a Lagrangian conifold. Let $S$ denote
the union of its ends. A subset $S^*$ of $S$ defines a \textit{marking} on $L$.
We can then write $S=S^*\amalg S^{**}$, where $S^{**}$ is the complement
of $S^*$. We say $S^*$ is a \textit{CS-marking} if all ends in $S^*$ are CS; it
is an \textit{AC-marking} if all ends in $S^*$ are AC. We will denote by $d$
the number of ends in $S^*$.
\end{definition}

\begin{definition}\label{def:compatible}
 Let $(L,\iota,S^*)$ be a CS-marked Lagrangian conifold in $\C^m$, with induced metric $g$. Let
$\Sigma^*$, $\mathcal{C}^*$ denote the
links and cones corresponding to $S^*$. Given any end $S_i\subseteq S^*$ let
$\phi_i:\Sigma_i\times (0,\epsilon]\rightarrow \overline{S_i}\setminus\{x_i\}$ be the corresponding
diffeomorphism.

Let $(\hat{L},\hat{\iota},\hat{S}^*)$ be an AC-marked Lagrangian 
conifold in $\C^m$, with induced metric $\hat{g}$. Let $\hat{\Sigma}^*$, $\hat{\mathcal{C}}^*$,
$\hat{\phi}_i:\hat{\Sigma}_i\times
[\hat{R},\infty)\rightarrow \overline{\hat{S}_i}$ denote the corresponding
links, cones and diffeomorphisms, as above. 

We say that $L$ and $\hat{L}$ are \textit{compatible} if they satisfy the
following assumptions:
\begin{enumerate}
\item $\mathcal{C}^*=\hat{\mathcal{C}}^*$. Up to relabelling the ends, we may assume that  $\mathcal{C}_i=\hat{\mathcal{C}}_i$.
\item $\hat{R}<\epsilon$. Up to a preliminary rescaling of $\hat{L}$, one can always assume this is true. We can then identify
appropriate
subsets of $S^*$ and $\hat{S}^*$ via the maps $\hat{\phi}_i\circ\phi_i^{-1}$. 
\item On each marked AC end,
the metrics $\hat{\phi}_i^*\hat{g}$ and $\tg_i$ are scaled-equivalent in the
sense of Definition \ref{rem:equivalentscaledmetrics}. Analogously, on each
marked CS end,
the metrics $\phi_i^*g$ and $\tg_i$ are scaled-equivalent. Again, up to a preliminary rescaling one can always assume this is true. 
\item If $\hat{S}_i,\hat{S}_j\in \hat{S}^*$ belong to the same connected component of $\hat{L}$ then the corresponding ends $S_i,S_j\in S^*$ satisfy $\iota(x_i)=\iota(x_j)$.
\item For each AC end $\hat{S}_i\in \hat{S}^*$, the corresponding center is $p_i=0$ and the corresponding rate $\hat{\lambda}_i$ satisfies $\hat{\lambda}_i<0$.
\end{enumerate}
If
$L$ is weighted via $\boldsymbol{\beta}$ and $\hat{L}$ is weighted via
$\hat{\boldsymbol{\beta}}$ we further require that, on $S_i\in S^{*}$ and $\hat{S}_i\in \hat{S}^{*}$, the
corresponding constants satisfy
$\beta_i=\hat{\beta}_i$ and that $\hat{\beta}_i=\hat{\beta}_j$ if $\hat{S}_i$ and $\hat{S}_j$ are marked ends in the same connected component of $\hat{L}$.
\end{definition}

Let $\iota:L\rightarrow\C^m$ be a Lagrangian conifold in $\C^m$ with metric $g$ and CS-marking $S^*$. Let $\hat{\iota}:\hat{L}\rightarrow\C^m$ be a second Lagrangian conifold with metric $\hat{g}$ and AC-marking $\hat{S}^*$. Let $(\rho,\boldsymbol{\beta})$, respectively
$(\hat{\rho},\hat{\boldsymbol{\beta}})$, be corresponding radius functions and
weights. Assume $L$, $\hat{L}$ are compatible. Let $d$ denote the number of marked ends. Choose parameters $\boldsymbol{t}=(t_1,\dots,t_d)>0$ sufficiently
small. We assume that $\boldsymbol{t}$ is compatible with the
decomposition of $\hat{L}$ into its connected components: specifically, that
$t_i=t_j$ if $\hat{S}_i$ and $\hat{S}_j$ belong to the same connected component
of $\hat{L}$. We then define and study the
\textit{parametric Lagrangian connect sum}
of $(L,\iota)$ and $(\hat{L},\hat{\iota})$ via the following steps. 
\subsubsection*{The abstract manifolds} We set
\begin{equation*}
L_{\boldsymbol{t}}:=(\hat{L}\setminus\hat{S}^*)\cup
(\cup_{\Sigma_i\subseteq\Sigma^*}\Sigma_i\times[t_i\hat{R},\epsilon])\cup
(L\setminus S^*), 
\end{equation*}
where the components of the boundary of $\hat{L}\setminus\hat{S}^*$ are
identified with
the $\Sigma_i\times\{t_i\hat{R}\}$ via maps $\hat{\phi}_{t_i,i}$ defined as in
Lemma
\ref{l:rescaledlag} and the components of the boundary of $L\setminus S^*$
are identified
with the $\Sigma_i\times\{\epsilon\}$ via the maps $\phi_i$. We call $\Sigma_i\times[t_i\hat{R},\epsilon]$ the
\textit{neck regions} of $L_{\boldsymbol{t}}$.

Notice that, using the notation of Definition \ref{def:cslagsub}, the corresponding manifold $\bar{L}_{\boldsymbol{t}}$  may contain a finite number of singularities inherited from either $S^{**}$ or $\hat{S}^{**}$.
\subsubsection*{The Lagrangian immersions} We now want to build a Lagrangian immersion
$\iota_{\boldsymbol{t}}:L_{\boldsymbol{t}}\rightarrow\C^m$. On $L\setminus S^*$ we simply let $\iota_{\boldsymbol{t}}$ coincide with the restriction of
$\iota$. On the $i$-th connected component of
$\hat{L}\setminus \hat{S}^*$ we let $\iota_{\boldsymbol{t}}$ coincide
with the restriction of $t_i\hat{\iota}+\iota(x_i)$. On $\cup_{\Sigma_i\subseteq\Sigma^*}\Sigma_i\times[t_i\hat{R},\epsilon]$  we need to interpolate between these maps. We do so using the assumption $\hat{\lambda}_i<0$, as follows.

As in Section \ref{ss:rescaledaccslagdefs}, we can identify $S_i$ with $\Sigma_i\times (0,\epsilon)$. Then $\iota(\theta,r)=\Phi_{\mathcal{C}_i}(\theta,r,\alpha_i(\theta,r))+\iota(x_i)$, for some 
closed 1-form $\alpha_i$ in $C^\infty_{\mu_i-1}(\mathcal{U})$. As usual, let us write $\alpha_i=\alpha_{i,1}+\alpha_{i,2}\,dr$.
Using the fact that $\mu_i>0$ and that we are restricting
our attention to $r\in (0,\epsilon)$, one can prove that $\alpha_i$ is exact: $\alpha_i=dA_i$ where
$A_i(\theta,r):=\int_0^r\alpha_{i,2}(\theta,\rho)\,d\rho\in C^\infty_{\mu_i}(\mathcal{C}_i)$, cf.  \cite{pacini:sldefs} for details.
Likewise, identifying $\hat{S}_i$ with $\Sigma_i\times (\hat{R},\infty)$, we obtain $\hat{\iota}(\theta,r)=\Phi_{\mathcal{C}_i}(\theta,r,\hat{\alpha}_i(\theta,r))$, for some closed 1-form
$\hat{\alpha}_i=\hat{\alpha}_{i,1}+\hat{\alpha}_{i,2}\,dr\in C^\infty_{\hat{\lambda}_i-1}(\mathcal{U})$. As above, our assumption $\hat{\lambda}_i<0$ can be used to prove that $\hat{\alpha}_i=d\hat{A}_i$, where
$\hat{A}_i(\theta,r):=-\int_r^\infty\hat{\alpha}_{i,2}(\theta,\rho)\,d\rho\in
C^\infty_{\hat{\lambda}_i}(\mathcal{C}_i)$.

Define $\hat{\alpha}_{t_i}$ from $\hat{\alpha}_i$ as in Equation
\ref{eq:alphat}. More explicitly, since $\hat{\alpha}_{i,1}=d_{\Sigma_i}\hat{A}_i$ and $\hat{\alpha}_{i,2}=\frac{d}{dr}\hat{A}_i$,
\begin{equation*}
 \hat{\alpha}_{t_i}(\theta,r)=t_i^2d_{\Sigma_i}\hat{A}_{i|(\theta,r/t_i)}+t_i\frac{d}{dr}\hat{A}_{i|(\theta,r/t_i)}dr=d(t_i^2\hat{A}_i(\theta,r/t_i)).
\end{equation*}
Write $\hat{\alpha}_{t_i}=\hat{\alpha}_{t_i,1}+\hat{\alpha}_{t_i,2}\,dr$. If we define
\begin{align}
\label{eq:A_t=t^2A}
\hat{A}_{t_i}(\theta,r)
&:=-\int_r^\infty\hat{\alpha}_{t_i,2}(\theta,\rho)\,d\rho\\
&=-\int_r^\infty t_i\hat{\alpha}_2(\theta, \rho/t_i)\,d\rho\nonumber\\
&=-\int_{r/t_i}^\infty
t_i^2\hat{\alpha}_2(\theta,\rho)\,d\rho=t_i^2\hat{A}_i(\theta,
r/t_i)\nonumber,
\end{align}
we conclude
$\hat{\alpha}_{t_i}(\theta,r)=d\hat{A}_{t_i}(\theta,r)$.

Choose
$\tau\in (0,1)$. If the $t_i$ are sufficiently small, we find
$t_i\hat{R}<t_i^\tau<2t_i^\tau<\epsilon$. Fix a monotone increasing function $G:(0,\infty)\rightarrow [0,1]$ such that
$G(r)\equiv 0$ on $(0,1]$ and $G(r)\equiv 1$ on $[2,\infty)$. Define a function $A_{t_i}:\Sigma_i\times[t_i\hat{R},\epsilon]\rightarrow\R$ by setting
\begin{equation}
\label{eq:A_t}
 A_{t_i}(\theta,r):=G(t_i^{-\tau}r)A_i(\theta,r)+(1-G(t_i^{-\tau}r))\hat{A}_{
t_i}(\theta,r). 
\end{equation}
Then $A_{t_i}$ interpolates between $\hat{A}_{t_i}$ (when $r\simeq
t_i\hat{R}$) and $A_i$ (when $r\simeq \epsilon$) so $dA_{t_i}$
interpolates between $\hat{\alpha}_{t_i}$ and $\alpha_i$. 

We now set
$\iota_{\boldsymbol{t}}(\theta,r):=\Phi_{\mathcal{C}_i}(\theta,r,dA_{t_i}(\theta,r))+\iota(x_i)$. Then  $\iota_{\boldsymbol{t}}\equiv\iota$ on $\Sigma_i\times
[2t_i^\tau,\epsilon]$ and
$\iota_{\boldsymbol{t}}(\theta,r)=t_i\hat{\iota}(\theta,r/t_i)+\iota(x_i)$ on
$\Sigma_i\times [t_i\hat{R},t_i^\tau]$.

\subsubsection*{The induced metrics} Let $g_{\boldsymbol{t}}:=\iota_{\boldsymbol{t}}^*\tilde{g}$ be the induced metric on $L_{\boldsymbol{t}}$. 
\begin{lemma}\label{l:induced_metric} Assume $\tau$ is sufficiently close to $1$: specifically, for each neck, $\frac{2-\hat{\lambda}_i}{\mu_i-\hat{\lambda}_i}<\tau<1$. Then the metric $g_{\boldsymbol{t}}$ satisfies:
\begin{equation*}
 g_{\boldsymbol{t}}=\left\{
\begin{array}{llll}
 t_i^2\hat{g} & \mbox{ on the corresponding component of }
\hat{L}\setminus\hat{S}^*\\
\hat{\phi}_{t_i,i}^*(t_i^2\hat{g}) & \mbox{ on
}\Sigma_i\times[t_i\hat{R},t_i^\tau]\\
\phi_i^*g & \mbox{ on } \Sigma_i\times[2t_i^\tau,\epsilon]\\
g & \mbox{ on } L\setminus S^*
\end{array}\right.
\end{equation*}
and, for all $j\geq 0$ and as
$\boldsymbol{t}\rightarrow 0$,
\begin{equation}
 \label{eq:g_estimate}
\sup_{\Sigma_i\times[t_i^\tau,2t_i^\tau]}
|\tnabla^j(g_{\boldsymbol{t}}-\tg_i)|_{
r^ { -2 } \tg_i\otimes\tg_i} \rightarrow 0.
\end{equation}
\end{lemma}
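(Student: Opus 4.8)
The plan is to prove the two assertions separately: the piecewise description of $g_{\boldsymbol{t}}$ is pure bookkeeping built on the construction of $\iota_{\boldsymbol{t}}$ above, while the estimate \ref{eq:g_estimate} on the overlap $\Sigma_i\times[t_i^\tau,2t_i^\tau]$ is where the hypothesis on $\tau$ is used. For the piecewise formula: on $L\setminus S^*$ we set $\iota_{\boldsymbol{t}}=\iota$, so $g_{\boldsymbol{t}}=g$. On $\Sigma_i\times[2t_i^\tau,\epsilon]$ the cutoff $G(t_i^{-\tau}r)$ equals $1$, hence $A_{t_i}=A_i$ and $\iota_{\boldsymbol{t}}=\Phi_{\mathcal{C}_i}(\cdot,dA_i)+\iota(x_i)=\iota$, so $g_{\boldsymbol{t}}=\phi_i^*g$ under the identification $\phi_i$. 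On $\Sigma_i\times[t_i\hat R,t_i^\tau]$ the cutoff vanishes, so $A_{t_i}=\hat A_{t_i}$; combining the identity $\hat A_{t_i}(\theta,r)=t_i^2\hat A_i(\theta,r/t_i)$ of Equation \ref{eq:A_t=t^2A} with the $\R^+$-equivariance of $\Phi_{\mathcal{C}_i}$ from Equation \ref{eq:action} gives $\Phi_{\mathcal{C}_i}(\theta,r,d\hat A_{t_i}(\theta,r))=t_i\,\hat\iota(\theta,r/t_i)$, hence $\iota_{\boldsymbol{t}}(\theta,r)=t_i\,\hat\iota(\theta,r/t_i)+\iota(x_i)$; pulling back $\tg$, using $t^*\tg=t^2\tg$ and that translations of $\C^m$ are isometries, and recognising $t_i\,\hat\iota(\cdot/t_i)$ as $t_i\,(\hat\iota\circ\hat\phi_{t_i,i})$ in the notation of Lemma \ref{l:rescaledlag}, we obtain $g_{\boldsymbol{t}}=\hat\phi_{t_i,i}^*(t_i^2\hat g)$. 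Finally, on a connected component of $\hat L\setminus\hat S^*$ we have $\iota_{\boldsymbol{t}}=t_i\hat\iota+\iota(x_i)$, so $g_{\boldsymbol{t}}=t_i^2\hat g$.

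For the estimate, note that on $\Sigma_i\times[t_i^\tau,2t_i^\tau]$ we have $\iota_{\boldsymbol{t}}=\Phi_{\mathcal{C}_i}(\cdot,dA_{t_i})+\iota(x_i)$, while $\tg_i$ is the pull-back of $\tg$ by the zero-section $\Phi_{\mathcal{C}_i}(\cdot,0)$. Writing $\Phi_{\mathcal{C}_i}=\Psi_{\mathcal{C}_i}+R$ with $\Psi_{\mathcal{C}_i}$ linear along the fibres and $R$ as in Equation \ref{eq:R_properties}, and exploiting the $\R^+$-equivariance so that the metric $\Phi_{\mathcal{C}_i}^*\tg$ on the fibre ball of radius $\sim Cr$ is a rescaling of its restriction to the unit-scale model, one obtains — in the spirit of the rescaling computations of Section \ref{s:rescaled_lag_cons} together with a Taylor expansion of $\Phi_{\mathcal{C}_i}$ — a pointwise bound of the form
\begin{equation*}
|\tnabla^j(g_{\boldsymbol{t}}-\tg_i)|_{r^{-2}\tg_i\otimes\tg_i}\ \leq\ C\sum_{l=1}^{j+2}r^{\,l-2}\,|\tnabla^l A_{t_i}|_{\tg_i}\ +\ (\text{higher-order terms in the same quantities}),
\end{equation*}
valid wherever $dA_{t_i}$ lies in the Lagrangian neighbourhood $\mathcal{U}$; the dimensionless combinations $r^{l-2}|\tnabla^l A_{t_i}|_{\tg_i}$ are precisely those for which $dA_{t_i}\in\mathcal{U}$ reads as an $l=1$ bound and for which a closed $1$-form of rate $\mu-1$ produces a metric perturbation of rate $\mu-2$, cf. Equation \ref{eq:cs_metric} and Remark \ref{rem:tensored_metrics}. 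Hence it suffices to show $\sup_{\Sigma_i\times[t_i^\tau,2t_i^\tau]} r^{l-2}|\tnabla^l A_{t_i}|_{\tg_i}\to 0$ as $t_i\to 0$, for each $l$.

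To do this, write $A_{t_i}=\hat A_{t_i}+G(t_i^{-\tau}r)\,(A_i-\hat A_{t_i})$. From the discussion preceding the lemma, $A_i\in C^\infty_{\mu_i}(\mathcal{C}_i)$ with $\mu_i>2$, and $\hat A_i\in C^\infty_{\hat\lambda_i}(\mathcal{C}_i)$ with $\hat\lambda_i<0$, and $\hat A_{t_i}(\theta,r)=t_i^2\hat A_i(\theta,r/t_i)$. On the overlap $r\sim t_i^\tau$ and $r/t_i\sim t_i^{\tau-1}\to\infty$, so
\begin{equation*}
|\tnabla^l A_i|_{\tg_i}=O(t_i^{\tau(\mu_i-l)}),\qquad |\tnabla^l \hat A_{t_i}|_{\tg_i}=t_i^{\,2-\hat\lambda_i}\,O\big(t_i^{\tau(\hat\lambda_i-l)}\big)=O\big(t_i^{\,2-\hat\lambda_i+\tau(\hat\lambda_i-l)}\big).
\end{equation*}
Since $G(t_i^{-\tau}\cdot)$ satisfies $|\tnabla^a(G(t_i^{-\tau}\cdot))|_{\tg_i}=O(t_i^{-\tau a})$ on the overlap and, for $a\geq 1$, vanishes identically outside it, the Leibniz rule gives $|\tnabla^l A_{t_i}|_{\tg_i}=O(t_i^{\tau(\mu_i-l)})+O(t_i^{\,2-\hat\lambda_i+\tau(\hat\lambda_i-l)})$ there, hence
\begin{equation*}
r^{\,l-2}\,|\tnabla^l A_{t_i}|_{\tg_i}\ =\ O\big(t_i^{\,\tau(\mu_i-2)}\big)\ +\ O\big(t_i^{\,(2-\hat\lambda_i)(1-\tau)}\big)\qquad\text{on }\Sigma_i\times[t_i^\tau,2t_i^\tau].
\end{equation*}
Here $\mu_i>2$ makes the first exponent positive, while $\tau<1$ (with $\hat\lambda_i<2$) makes the second positive; the lower bound $\tau>\frac{2-\hat\lambda_i}{\mu_i-\hat\lambda_i}$ is precisely what guarantees, in addition, that on the overlap the $L$-side data $A_i$ is dominated by the rescaled $\hat L$-side data $\hat A_{t_i}$ (equivalently $t_i^{\tau\mu_i}\ll t_i^{\,2-\hat\lambda_i+\tau\hat\lambda_i}$), which is what one needs to keep the mixed and cutoff terms hidden in the higher-order part of the estimate under control and to match cleanly onto the exact region $\Sigma_i\times[t_i\hat R,t_i^\tau]$. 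Feeding these bounds, and the resulting bounds on the higher-order terms (products of the same quantities), into the pull-back estimate of the previous paragraph yields \ref{eq:g_estimate}.

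I expect the main obstacle to be the scale-invariant pull-back estimate and, inside it, the treatment of the derivatives of the cutoff $G(t_i^{-\tau}r)$: these produce the large factors $t_i^{-\tau a}$, and one must check that each such factor is always multiplied by a difference $A_i-\hat A_{t_i}$ (or a derivative of it) small enough — by the rates $\mu_i>2$, $\hat\lambda_i<0$ and the chosen range of $\tau$ — to absorb it with room to spare, uniformly along the whole overlap. Everything else reduces to the bookkeeping of Section \ref{s:rescaled_lag_cons} and the Taylor expansion of $\Phi_{\mathcal{C}_i}$.
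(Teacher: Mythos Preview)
Your proposal is correct and follows essentially the same route as the paper: bound $|\tnabla^l A_{t_i}|_{\tg_i}$ on the overlap by separately estimating the $A_i$ and $\hat A_{t_i}$ contributions together with the cutoff derivatives, then pass to $g_{\bt}-\tg_i$ via the decomposition $\Phi_{\mathcal{C}_i}=\Psi_{\mathcal{C}_i}+R$ and $\R^+$-equivariance. The paper does the same computation but more explicitly, writing out $dA_{t_i}$ term by term and bounding each summand, then handling $R\circ dA_{t_i}$ directly; your ``general scale-invariant pull-back estimate'' is the same mechanism packaged abstractly.

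One clarification on the role of the lower bound $\tau>\frac{2-\hat\lambda_i}{\mu_i-\hat\lambda_i}$: as you yourself note, both exponents $\tau(\mu_i-2)$ and $(2-\hat\lambda_i)(1-\tau)$ are already positive from $\mu_i>2$ and $\tau<1$, so \eqref{eq:g_estimate} as stated does not strictly require the lower bound. In the paper the lower bound is used not to ``keep the mixed and cutoff terms under control'' but simply to collapse the two-term bound into the single term $Ct_i^{(1-\tau)(2-\hat\lambda_i)}$, which is the form needed in later estimates (e.g.\ Proposition \ref{prop:Q_estimates}). Your attribution of its role is therefore slightly off, though this does not affect the correctness of your argument for the lemma itself.
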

\begin{proof}
The first part of the claim is a direct consequence of the definitions so we only need to check Equation \ref{eq:g_estimate}. We will use the same methods already used in Section \ref{s:con_review} and in \cite{pacini:sldefs} Section 4.2 except that, instead of concentrating on the behaviour as $r\rightarrow 0$, we concentrate on what happens in the subset $\Sigma_i\times[t_i^\tau,2t_i^\tau]$.

 Using Equations \ref{eq:A_t} and \ref{eq:A_t=t^2A} we see that, on $\Sigma_i\times[t_i^\tau,2t_i^\tau]$,
\begin{align*}
 dA_{t_i}(\theta,r)&=d_\Sigma A_{t_i}+A_{t_i}'dr\\
&=G(t_i^{-\tau}r)d_\Sigma A_{i|(\theta,r)}+(1-G(t_i^{-\tau}r))t_i^2d_\Sigma \hat{A}_{i|(\theta,r/t_i)}\\
&\ \ \ +\left(t_i^{-\tau}G'A_i+GA_i'-t_i^{-\tau}G't_i^2\hat{A}_i(\theta,r/t_i)+(1-G)t_i\hat{A}_i'(\theta,r/t_i)\right)dr,
\end{align*}
where $'$ denotes differentiation. Notice that $G$, $G'$ are bounded. Thus, using the norm $\tg_{i|(\theta,r)}$,
\begin{align*}
|dA_{t_i}|(\theta,r)\leq C\Big\{&|d_\Sigma A_i|_{|(\theta,r)}+t_i^{-\tau}|A_i|_{|(\theta,r)}+|A_i'|_{|(\theta,r)}\\
&+t_i^2|d_\Sigma\hat{A}_i|_{|(\theta,r/t_i)}+t_i^{2-\tau}|\hat{A}_i|_{|(\theta,r/t_i)}+t_i|\hat{A}_i'|_{|(\theta,r/t_i)}\Big\}.
\end{align*}
Since $A_i\in C^\infty_{\mu_i}$ we find
\begin{equation*}
 |d_\Sigma A_i|_{|(\theta,r)}\leq Cr^{\mu_i-1},\ \ |A_i|_{|(\theta,r)}\leq Cr^{\mu_i},\ \ |A_i'|_{|(\theta,r)}\leq Cr^{\mu_i-1}.
\end{equation*}
Since $\hat{A}_i\in C^\infty_{\hat{\lambda}_i}$ we find $|\hat{A}_i|_{|(\theta,r/t_i)}\leq C(r/t_i)^{\hat{\lambda}_i}$,   
$|\hat{A}_i'|_{|(\theta,r/t_i)}\leq C(r/t_i)^{\hat{\lambda}_i-1}$.
Furthermore, since we are using the norm $\tg_{i|(\theta,r)}$,
\begin{align*}
|d_\Sigma\hat{A}_i|_{|(\theta,r/t_i)}&=|d_\Sigma\hat{A}_i|_{r^2g_i'}(\theta,r/t_i)=|d_\Sigma\hat{A}_i|_{t_i^2(r/t_i)^2g_i'}(\theta,r/t_i)\\
&=t_i^{-1}|d_\Sigma\hat{A}_i|_{(r/t_i)^2g_i'}(\theta,r/t_i)\leq Ct_i^{-\hat{\lambda}_i}r^{\hat{\lambda}_i-1}.
\end{align*}
The end result is that
\begin{equation*}
 \sup_{\Sigma_i\times[t_i^\tau,2t_i^\tau]}|dA_{t_i}|_{\tg_i}\leq C\Big\{t_i^{\tau(\mu_i-1)}+t_i^{2-\hat{\lambda}_i+\tau(\hat{\lambda}_i-1)}\Big\}\leq C\,t_i^{2-\hat{\lambda}_i+\tau(\hat{\lambda}_i-1)},
\end{equation*}
where we use our assumption on $\tau$. More generally, one can check that
\begin{equation*}
 \sup_{\Sigma_i\times[t_i^\tau,2t_i^\tau]}|\tnabla^k A_{t_i}|_{\tg_i}\leq C\,t_i^{2-\hat{\lambda}_i+\tau(\hat{\lambda}_i-k)}.
\end{equation*}
 Recall from Section \ref{ss:conifold_defs} that we can write $\Phi_{\mathcal{C}_i}=\Psi_{\mathcal{C}_i}+R_i$. Using the equivariance properties and estimates given for $R_i$ and writing $dA_{t_i}=\alpha_{t_i,1}+\alpha_{t_i,2}\,dr$, we find that, on $\Sigma_i\times[t_i^\tau,2t_i^\tau]$,
\begin{align*}
 (R_i\circ dA_{t_i})(\theta,r)&=R_i(\theta,r,\alpha_{t_i,1},\alpha_{t_i,2})\\
&=R_i(\theta, r\cdot 1,r^2 r^{-2}\alpha_{t_i,1},rr^{-1}\alpha_{t_i,2})\\
&=rR_i(\theta,1,r^{-2}\alpha_{t_i,1},r^{-1}\alpha_{t_i,2})\\
&\leq Cr(r^{-4}|\alpha_{t_i,1}|^2_{g_i'}+r^{-2}|\alpha_{t_i,2}|^2)\\
&=Cr^{-1}(|\alpha_{t_i,1}|^2_{r^2g_i'}+|\alpha_{t_i,2}|^2)\\
&\leq Ct_i^{-\tau}|dA_{t_i}|_{\tg_i}^2(\theta,r)=Ct_i^{-\tau+2(2-\hat{\lambda}_i+\tau(\hat{\lambda}_i-1))}\\
&< Ct_i^{2-\hat{\lambda}_i+\tau(\hat{\lambda}_i-1)},
\end{align*}
where we use $\tau<1$. The definition of $\Psi_{\mathcal{C}_i}$, cf. \cite{pacini:sldefs} Section 4.2, yields an isometry $\Psi_{\mathcal{C}_i}\circ dA_{t_i}-\iota_i\simeq dA_{t_i}$. We thus conclude that, on $\Sigma_i\times[t_i^\tau,2t_i^\tau]$,
\begin{align*}
|\Phi_{\mathcal{C}_i}\circ dA_{t_i}-\iota_i|(\theta,r)&=|(\Psi_{\mathcal{C}_i}\circ dA_{t_i}-\iota_i)+R_i\circ dA_{t_i}|(\theta,r)\\
&\leq Ct_i^{2-\hat{\lambda}_i+\tau(\hat{\lambda}_i-1)}.
\end{align*}
The same methods prove that, more generally,
\begin{equation*}
 \sup_{\Sigma_i\times[t_i^\tau,2t_i^\tau]}|\tnabla^k(\Phi_{\mathcal{C}_i}\circ dA_{t_i}-\iota_i)|_{\tg_i}
\leq Ct_i^{2-\hat{\lambda}_i+\tau(\hat{\lambda}_i-1-k)}.
\end{equation*}
Analogously to Equations \ref{eq:cs_metric}, \ref{eq:ac_metric}, one can conclude that
%\footnote{coincides with previous estimate on metric when $r=t^\tau$.} 
\begin{equation*}
 \sup_{\Sigma_i\times[t_i^\tau,2t_i^\tau]}|\tnabla^k(g_{\bt}-\tg_i)|_{\tg_i}
\leq Ct_i^{2-\hat{\lambda}_i+\tau(\hat{\lambda}_i-2-k)}=Ct_i^{(1-\tau)(2-\hat{\lambda}_i)-\tau k},
\end{equation*}
which rescales to 
\begin{equation*}
 \sup_{\Sigma_i\times[t_i^\tau,2t_i^\tau]}|\tnabla^k(g_{\bt}-\tg_i)|_{r^{-2}\tg_i\otimes\tg_i}
\leq Ct_i^{(1-\tau)(2-\hat{\lambda}_i)}.
\end{equation*}
\end{proof}

\subsubsection*{The radius functions}
 We endow $L_{\boldsymbol{t}}$ with the radius function
\begin{equation*}
 \rho_{\boldsymbol{t}}:=\left\{
\begin{array}{ll}
 t_i\hat{\rho} & \mbox{ on the corresponding component of }
\hat{L}\setminus\hat{S}^*\\
r & \mbox{ on } \Sigma_i\times[t_i\hat{R},\epsilon]\\
\rho & \mbox{ on }L\setminus S^*.
\end{array}\right.
\end{equation*}

\subsubsection*{The weights and Sobolev spaces}
We endow $L_{\boldsymbol{t}}$ with the weight
\begin{equation*}
 \boldsymbol{\beta}_{\boldsymbol{t}}:=\left\{
\begin{array}{ll}
 \hat{\boldsymbol{\beta}} & \mbox{ on } \hat{L}\setminus\hat{S}^*\\
\beta_i & \mbox{ on } \Sigma_i\times[t_i\hat{R},\epsilon]\\
\boldsymbol{\beta} & \mbox{ on }L\setminus S^*.
\end{array}\right.
\end{equation*}
We now need to define the function $w_{\boldsymbol{t}}$, used to define weighted Sobolev spaces as in Definition \ref{def:csac_sectionspaces}. The simplest
case is when $\hat{\boldsymbol{\beta}}$ is constant on each connected
component of $\hat{L}$. We then define
\begin{equation*}
 w_{\boldsymbol{t}}:=\rho_{\boldsymbol{t}}^{-{\boldsymbol{\beta}_{\boldsymbol{t}}}}=\left\{
\begin{array}{ll}
 (t_i\hat{\rho})^{-\hat{\beta}_i} & \mbox{ on the corresponding
component of }
\hat{L}\setminus\hat{S}^*\\
r^{-\beta_i} & \mbox{ on } \Sigma_i\times[t_i\hat{R},\epsilon]\\
\rho^{-\boldsymbol{\beta}} & \mbox{ on }L\setminus S^*.
\end{array}\right.
\end{equation*}
For general weights $\hat{\boldsymbol{\beta}}$ we need to modify the function $w_{\boldsymbol{t}}$ as
follows:
\begin{equation*}
 w_{\boldsymbol{t}}:=\left\{
\begin{array}{ll}
 (t_i^\frac{\hat{\beta}_i-\hat{\boldsymbol{\beta}}}{\hat{\boldsymbol{\beta}}}t_i\hat{\rho})^{-\hat{
\boldsymbol {
\beta
}} }=t_i^{-\hat{\beta}_i}\hat{\rho}^{-\hat{\boldsymbol{\beta}}}  & \mbox{ on the
corresponding component of }
\hat{L}\setminus\hat{S}^*\\
r^{-\beta_i} & \mbox{ on } \Sigma_i\times[t_i\hat{R},\epsilon]\\
\rho^{-\boldsymbol{\beta}} & \mbox{ on }L\setminus S^*.
\end{array}\right.
\end{equation*}
Using this data we now define weighted Sobolev spaces
$W^p_{k,\boldsymbol{\beta}_{\boldsymbol{t}}}$ on $L_{\boldsymbol{t}}$ as in
Definition \ref{def:csac_sectionspaces}.

\subsubsection*{The Lagrangian neighbourhoods}
We define Lagrangian neighbourhoods for $(L_{\boldsymbol{t}}, i_{\boldsymbol{t}})$ using the same ideas as in Section 
\ref{s:rescaled_lag_cons}. Specifically, let 
\begin{equation*}
 \Phi_{\hat{L}}:\hat{\mathcal{U}}\subset T^*\hat{L}\rightarrow\C^m,\ \ \Phi_L:\mathcal{U}\subset T^*L\rightarrow\C^m
\end{equation*}
be Lagrangian nieghbourhoods and maps for $(L,\iota)$, $(\hat{L},\hat{\iota})$. We then define neighbourhoods and maps  
\begin{equation*}
\Phi_{L_{\boldsymbol{t}}}:\mathcal{U}_{\boldsymbol{t}}\subset T^*L_{\boldsymbol{t}}\rightarrow\C^m
\end{equation*}
as follows:
\begin{align*}
 \mathcal{U}_{\boldsymbol{t}} :=& \left\{
\begin{array}{ll}
 t_i\hat{\mathcal{U}}, & \mbox{ a subset of the corresponding component of }
T^*(\hat{L}\setminus\hat{S}^*)\\
\tau_{dA_{t_i}}^{-1}(\mathcal{U}), & \mbox{ a subset of } T^*(\Sigma_i\times[t_i\hat{R},\epsilon])\\
\mathcal{U}, & \mbox{ a subset of }T^*(L\setminus S^*),
\end{array}\right.\\
 \Phi_{L_{\boldsymbol{t}}} :=& \left\{
\begin{array}{ll}
 t_i\Phi_{\hat{L}}t_i^{-1}+\iota(x_i) & \mbox{ on } t_i\hat{\mathcal{U}}\\
\Phi_{\mathcal{C}_i}\circ\tau_{dA_{t_i}} +\iota(x_i)& \mbox{ on } \tau_{dA_{t_i}}^{-1}(\mathcal{U})\\
\Phi_L & \mbox{ on } \mathcal{U}.
\end{array}\right.
\end{align*}
By construction $\Phi_{L_{\boldsymbol{t}}}$, restricted to the zero-section, coincides with $\iota_{\boldsymbol{t}}$.

\subsubsection*{Deformations of Lagrangian connect sums}

Assume for simplicity that all ends in $S^{**}$, respectively $\hat{S}^{**}$, are AC with convergence rate $\boldsymbol{\lambda}$, respectively $\hat{\boldsymbol{\lambda}}$. Exactly as in Section \ref{ss:conifold_defs}, for each fixed $\bt$ we can parametrize the set of Lagrangian immersions which are close to $\iota_{\bt}$ and are asymptotic to the same cones with rate $(\boldsymbol{\lambda},\hat{\boldsymbol{\lambda}})$ using the space of closed $1$-forms in $C^\infty_{(\boldsymbol{\lambda}-1,\hat{\boldsymbol{\lambda}}-1)}(\mathcal{U}_{\bt})$. 
Those same methods also allow us to parametrize deformations of general Lagrangian connect sum conifolds, with either fixed or moving singularities.

\begin{example}\label{ex:lagr_planes_desing}
Assume given a finite configuration of $l$ Lagrangian planes in $\C^m$. We assume there is only a finite number $s$  of (necessarily transverse) intersection points and that any intersection involves only two planes at a time. Let $(L,\iota)$ denote a parametrization of this configuration. Specifically, $L$ is the disjoint union of $l$ copies of $\R^m$ and we choose an even number of points $x_1,\dots,x_{2s}\in L$ such that each $\iota(x_{2i})=\iota(x_{2i+1})\in\C^m$ is one of the intersection points. Then $(L, \iota)$ is a Lagrangian conifold in 
$\C^m$ with $l$ AC ends and $2s$ CS ends. Notice that each asymptotic cone $\mathcal{C}_i$ is parallel to one of the original planes in the configuration. Letting $S^*$ be the union of the CS ends we now have a CS-marked Lagrangian conifold in $\C^m$.

Now assume that for each pair of indices $\{2i,2i+1\}$ there exists an embedded Lagrangian submanifold $\hat{L}_i$ interpolating between $\mathcal{C}_{2i}$ and $\mathcal{C}_{2i+1}$ and satisfying the necessary condition on convergence rates: $\hat{\lambda}_{2i}<0$, $\hat{\lambda}_{2i+1}<0$. Let $(\hat{L}, \hat{\iota})$ denote the union of these manifolds together with the corresponding embeddings. Then $(\hat{L}, \hat{\iota})$ is an AC Lagrangian conifold with $2s$ ends. Letting $\hat{S}^*$ be the union of the AC ends we now have an AC-marked Lagrangian conifold.

By construction $(L,\iota)$, $(\hat{L},\hat{\iota})$ are compatible so we can glue them together as above. The result is a connected AC Lagrangian submanifold $(L_{\bt},\iota_{\bt})$ embedded in $\C^m$. This submanifold has $s$ necks and $l$ AC ends. Away from the neck regions it coincides with the original configuration of planes.
\end{example}

\begin{example}\label{ex:sub_and_planes}
 Let $(L',\iota')$ be a Lagrangian conifold in $\C^m$ with $l$ AC ends and $s$ CS ends. Choose a smooth point $x\in L'$ and a Lagrangian plane $\Pi+\iota(x)$ transverse to $L'$ in $\iota(x)$. Let $L$ denote the disjoint union of $L'$ and $\Pi$ and let $\iota$ denote the natural immersion. The point $x$ in $L'$ and the origin in the plane can be labelled as singularities so $(L,\iota)$ is a Lagrangian conifold in $\C^m$ with $l+1$ AC ends and $s+2$ CS ends. Assume there exists an embedded Lagrangian submanifold $\hat{L}$ interpolating between the tangent plane $i_*'(T_xL')$ and $\Pi$. As in Example \ref{ex:lagr_planes_desing} we can then build a family $(L_t,\iota_t)$ of Lagrangian conifolds which desingularizes $(L,\iota)$. Notice that $L_t$ has $l+1$ AC ends, $s$ CS ends and one neck.  

Of course we can repeat this construction any number of times, adding as many new AC ends as we want to our original conifold $L'$.   
\end{example}

As already seen, a weighted Sobolev Embedding Theorem holds for any conifold. Given that the connect sum of two conifolds is another (possibly smooth compact) conifold, it also holds for this connect sum. It is an important fact that the corresponding ``embedding constants'' are independent of the parameter $\boldsymbol{t}$, cf. \cite{pacini:ics} Theorem 7.7.

\begin{theorem}\label{thm:normstequivalent}
 Let $(L,\boldsymbol{\beta})$, $(\hat{L},\hat{\boldsymbol{\beta}})$ be
compatible weighted marked conifolds. Let $L_{\boldsymbol{t}}$,
$\rho_{\boldsymbol{t}}$, 
$\boldsymbol{\beta}_{\boldsymbol{t}}$ and $w_{\bt}$ 
denote the corresponding connect sums. Then all
forms of the weighted Sobolev Embedding Theorem, as in Theorem \ref{thm:embedding}, hold uniformly in
${\boldsymbol{t}}$,
\textit{i.e.} the corresponding Sobolev embedding constants are independent of
${\boldsymbol{t}}$.
\end{theorem}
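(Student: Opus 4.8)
The plan is to reduce the statement to the corresponding uniform estimate proven in \cite{pacini:ics} by carefully checking that the connect sum $(L_{\boldsymbol{t}}, g_{\boldsymbol{t}}, \rho_{\boldsymbol{t}}, w_{\boldsymbol{t}})$ constructed here is, as an \emph{abstract} weighted conifold, an instance of the abstract connect-sum construction to which that theorem applies. Since the Sobolev Embedding Theorem (Theorem \ref{thm:embedding}) involves only the Riemannian metric, the radius function, the weight, and the volume form — none of which see the Lagrangian immersion into $\C^m$ — the content is entirely about the abstract geometry. So the first step is to recall precisely the hypotheses under which \cite{pacini:ics} Theorem 7.7 gives $\boldsymbol{t}$-independent embedding constants: essentially, that on each neck region the metric, in the rescaled coordinates, is uniformly close to the model conical metric, and that the weights and radius function match up across the necks in the prescribed scaling-compatible way.

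The second step is to verify each of these hypotheses for our data. For the metric, this is exactly the content of Lemma \ref{l:induced_metric}: on the outer part $\Sigma_i\times[2t_i^\tau,\epsilon]$ we have $g_{\boldsymbol{t}}=\phi_i^*g$, on the inner part $\Sigma_i\times[t_i\hat{R},t_i^\tau]$ we have $g_{\boldsymbol{t}}=\hat{\phi}_{t_i,i}^*(t_i^2\hat{g})$, both of which are scaled-equivalent to $\tg_i$ uniformly in $\boldsymbol{t}$ (using compatibility assumption (3) in Definition \ref{def:compatible} together with the rescaling computation following Lemma \ref{l:rescaledlag}), and on the transition region $\Sigma_i\times[t_i^\tau,2t_i^\tau]$ the estimate \eqref{eq:g_estimate} gives $C^0$-closeness to $\tg_i$ with all derivatives, with a bound $Ct_i^{(1-\tau)(2-\hat\lambda_i)}\to 0$. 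Hence $g_{\boldsymbol{t}}$ is scaled-equivalent to the model neck metric uniformly in $\boldsymbol{t}$, in the sense of Remark \ref{rem:equivalentscaledmetrics}. For the radius function and the weight function $w_{\boldsymbol{t}}$, one checks directly from their definitions that they glue continuously across the necks and that $w_{\boldsymbol{t}}\rho_{\boldsymbol{t}}^j\nabla^j$-type quantities scale correctly; the modification of $w_{\boldsymbol{t}}$ by the factor $t_i^{-\hat\beta_i}$ on $\hat L\setminus\hat S^*$ is precisely the normalization that makes $w_{\boldsymbol{t}}\equiv r^{-\beta_i}$ on the whole neck, so that the weight has no $\boldsymbol{t}$-dependent jump in the region where the geometry is being stretched.

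The third step is essentially bookkeeping: having matched the hypotheses, invoke \cite{pacini:ics} Theorem 7.7 to conclude that each of the three embeddings in Theorem \ref{thm:embedding}, as well as the multiplication estimate, holds with a constant depending only on $(L,\boldsymbol{\beta})$, $(\hat L,\hat{\boldsymbol{\beta}})$, $\tau$, $k$, $l$, $p$ and $m$ — but not on $\boldsymbol{t}$. One should note that the multiplication estimate in the $lp>m$ regime follows formally from the uniform $W\hookrightarrow C^0$ embedding plus the uniform equivalence of the $W$-norm with itself, so it needs no separate argument beyond citing the product structure.

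\textbf{The main obstacle} I expect is not any single hard estimate but rather the matching of conventions: one must confirm that the parametrization of the neck regions chosen here (via $\phi_i$ on the outside and $\hat\phi_{t_i,i}$ on the inside, with overlap governed by the cutoff $G(t_i^{-\tau}r)$) is \emph{exactly} the one for which the uniform estimates of \cite{pacini:ics} were established — in particular that the ``width'' of the neck in the rescaled coordinate and the choice of exponent $\tau$ fall within the admissible range there, and that the abstract connect sum in \cite{pacini:ics} allows the weights $\hat{\boldsymbol{\beta}}$ to be non-constant on components (handled here by the modified $w_{\boldsymbol{t}}$). Once the dictionary is set up, the proof is a short citation; the real work has already been done in Lemma \ref{l:induced_metric} and in \cite{pacini:ics}.
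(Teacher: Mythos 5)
Your proposal is correct and follows essentially the same route as the paper, which proves this theorem simply by citing \cite{pacini:ics} Theorem 7.7 after noting that the connect sum is again an abstract weighted conifold; your additional verification that Lemma \ref{l:induced_metric} and the definitions of $\rho_{\boldsymbol{t}}$ and $w_{\boldsymbol{t}}$ put the glued data into the form required by that abstract result is exactly the bookkeeping the paper leaves implicit.
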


The above result relies on the fact that, on the two sides of each inequality, the relevant quantities are expressed using the same weight $\boldsymbol{\beta}_{\bt}$. If instead we allow the weight to change on different sides of the inequality, factors of $\bt$ are inevitable. We will need the following case.

\begin{lemma}\label{l:changing_weight_estimates}
Let $(L,\boldsymbol{\beta})$, $(\hat{L},\hat{\boldsymbol{\beta}})$ be compatible weighted marked conifolds. Choose a second pair of compatible weights $\boldsymbol{\beta}'$, $\hat{\boldsymbol{\beta}'}$.  Let $L_{\boldsymbol{t}}$,
$\rho_{\boldsymbol{t}}$, $\boldsymbol{\beta}_{\bt}$ and $w_{\bt}$, respectively $\boldsymbol{\beta}_{\bt}'$ and $w_{\bt}'$, denote the corresponding connect sums. Assume
\begin{equation*}
\left\{
\begin{array}{l}
\boldsymbol{\beta}_{\bt}\leq\boldsymbol{\beta}_{\bt}'\mbox{ on all AC ends of }L_{\bt}\\
\boldsymbol{\beta}_{\bt}\geq\boldsymbol{\beta}_{\bt}'\mbox{ on all CS ends of }L_{\bt}\\
\beta_i\leq\beta_i'\mbox{ on all necks of }L_{\bt}.
\end{array}
\right.
\end{equation*}
Then the natural immersions $C^k_{\boldsymbol{\beta}_{\bt}}(L_{\bt})\hookrightarrow C^k_{\boldsymbol{\beta}_{\bt}'}(L_{\bt})$ are not uniform in $\bt$. However, there exists $C>0$ such that, for all $\bt$ and for all $f\in C^k_{\boldsymbol{\beta}_{\bt}}(L_{\bt})$,
\begin{equation*}
 \|f\|_{C^k_{\boldsymbol{\beta}_{\bt}'}}\leq C \max \{t_i^{\beta_i-\beta_i'}\} \|f\|_{C^k_{\boldsymbol{\beta}_{\bt}}},
\end{equation*}
where $\max$ is calculated over all necks in $L_{\bt}$.
\end{lemma}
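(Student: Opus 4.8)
The plan is to reduce the estimate to a pointwise comparison of the two weight functions $w_{\bt}$ and $w_{\bt}'$ over the three regions of $L_{\bt}=(\hat L\setminus\hat S^*)\cup(\cup_i\Sigma_i\times[t_i\hat R,\epsilon])\cup(L\setminus S^*)$. Since $\|f\|_{C^k_{\boldsymbol{\beta}_{\bt}}}=\sum_{j=0}^k\sup_{x\in L_{\bt}}|w_{\bt}\rho_{\bt}^j\nabla^j f|_{g_{\bt}}$ and the primed norm uses the \emph{same} $\rho_{\bt}$, $g_{\bt}$ and $\nabla$, one has pointwise $|w_{\bt}'\rho_{\bt}^j\nabla^j f|=(w_{\bt}'/w_{\bt})\,|w_{\bt}\rho_{\bt}^j\nabla^j f|$; taking suprema and summing over $j\leq k$ yields $\|f\|_{C^k_{\boldsymbol{\beta}_{\bt}'}}\leq\big(\sup_{L_{\bt}}w_{\bt}'/w_{\bt}\big)\,\|f\|_{C^k_{\boldsymbol{\beta}_{\bt}}}$. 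So everything reduces to showing $\sup_{L_{\bt}}w_{\bt}'/w_{\bt}\leq C\max_i t_i^{\beta_i-\beta_i'}$ with $C$ independent of $\bt$.

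I would then bound this ratio region by region. On $L\setminus S^*$ it equals $\rho^{\boldsymbol{\beta}-\boldsymbol{\beta}'}$, which is bounded above by a $\bt$-independent constant because $\rho$ is pinched between positive constants on the core, $\rho\to\infty$ exactly where $\boldsymbol{\beta}\leq\boldsymbol{\beta}'$ (unmarked AC ends) and $\rho\to 0$ exactly where $\boldsymbol{\beta}\geq\boldsymbol{\beta}'$ (unmarked CS ends). On the component of $\hat L\setminus\hat S^*$ carrying parameter $t_i$ it equals $t_i^{\hat\beta_i-\hat\beta_i'}\hat\rho^{\hat{\boldsymbol{\beta}}-\hat{\boldsymbol{\beta}}'}$; the compatibility relations give $\hat\beta_i=\beta_i$, $\hat\beta_i'=\beta_i'$, and the $\hat\rho$-factor is bounded by a constant by the same trichotomy applied to $\hat L$, so the ratio there is $\leq Ct_i^{\beta_i-\beta_i'}$. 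On the $i$-th neck $\Sigma_i\times[t_i\hat R,\epsilon]$ it equals $r^{\beta_i-\beta_i'}$; as $\beta_i-\beta_i'\leq 0$ this is non-increasing in $r$, hence maximal at $r=t_i\hat R$ with value $\hat R^{\beta_i-\beta_i'}t_i^{\beta_i-\beta_i'}\leq Ct_i^{\beta_i-\beta_i'}$. Combining the three bounds, $\sup_{L_{\bt}}w_{\bt}'/w_{\bt}\leq C(1+\max_i t_i^{\beta_i-\beta_i'})$, and since $t_i<1$ and $\beta_i-\beta_i'\leq 0$ each $t_i^{\beta_i-\beta_i'}\geq 1$, so this is $\leq C'\max_i t_i^{\beta_i-\beta_i'}$, which is the claimed inequality. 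For the first assertion, that the inclusions are not uniform in $\bt$ when some $\beta_i<\beta_i'$, it suffices to test against a fixed bump profile supported near the point $r=t_i\hat R$ of the $i$-th neck: its primed-to-unprimed norm ratio behaves like $t_i^{\beta_i-\beta_i'}\to\infty$, so no $\bt$-independent constant can work.

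The only genuinely delicate part is the interface bookkeeping: I would have to check that $w_{\bt}'/w_{\bt}$ stays bounded across $\partial(\hat L\setminus\hat S^*)$ and $\partial(L\setminus S^*)$, i.e.\ that no stray power of $t_i$ is created where the three regional formulas are matched. This follows from the construction of $w_{\bt}$ in the preceding subsection together with $\hat\beta_i=\beta_i$: near the inner interface $\hat\rho\approx\hat R=O(1)$ and near the outer one $\rho=r=\epsilon=O(1)$, so the expressions agree up to $\bt$-independent constants; and in the general-weight case the auxiliary factor $t_i^{-\hat\beta_i}$ built into $w_{\bt}$ on $\hat L\setminus\hat S^*$ occurs identically (with $\hat\beta_i$, resp.\ $\hat\beta_i'$) in both weights, contributing precisely the factor $t_i^{\beta_i-\beta_i'}$ recorded above. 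Apart from this matching check, the argument is entirely routine.
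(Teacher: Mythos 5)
Your argument is correct and is essentially the paper's own proof: the same region-by-region comparison of $w_{\bt}'/w_{\bt}$ on $L\setminus S^*$, on $\hat L\setminus\hat S^*$ (where the factor $t_i^{\hat\beta_i-\hat\beta_i'}$ appears) and on the necks (where $r^{\beta_i-\beta_i'}$ is maximised at $r=t_i\hat R$), with the higher-order terms handled identically. Your extra remarks on the interface matching and the non-uniformity of the inclusion go slightly beyond what the paper writes out, but they are consistent with it.
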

\begin{proof}
 Consider the case $k=0$. On the $i$-th component of $\hat{L}\setminus\hat{S}^*$,
\begin{align*}
 |w_{\bt}'f|=&t_i^{-\hat{\beta}_i'}\hat{\rho}^{-\hat{\boldsymbol{\beta}}'}|f|\\
=&t_i^{\hat{\beta}_i-\hat{\beta}_i'}\hat{\rho}^{\hat{\boldsymbol{\beta}}-\hat{\boldsymbol{\beta}}'}t_i^{-\hat{\beta}_i}\hat{\rho}^{-\hat{\boldsymbol{\beta}}}|f|\leq C t_i^{\hat{\beta}_i-\hat{\beta}_i'} |w_{\bt}f|,
\end{align*}
because our assumptions imply that $\hat{\rho}^{\hat{\boldsymbol{\beta}}-\hat{\boldsymbol{\beta}}'}$ is bounded. On $L\setminus S^*$,
\begin{align*}
 |w_{\bt}'f|=\rho^{-\boldsymbol{\beta}'}|f|=\rho^{\boldsymbol{\beta}-\boldsymbol{\beta}'}\rho^{-\boldsymbol{\beta}}|f|\leq C |w_{\bt}f|.
\end{align*}
On any neck $\Sigma_i\times [t_i\hat{R},\epsilon]$ of $L_{\bt}$,
\begin{align*}
 |w_{\bt}'f|=r^{-\beta_i'}|f|=r^{\beta_i-\beta_i'}r^{-\beta_i}|f|\leq C t_i^{\beta_i-\beta_i'} |w_{\bt}f|.
\end{align*}
The case $k\geq 1$ is similar.
\end{proof}

Under appropriate conditions one can also show that the Laplace operator is uniformly invertible in the following sense. 

\begin{theorem}\label{thm:sum_injective}
Let $(\hat{L},\hat{\boldsymbol{\beta}})$ be a
weighted AC-marked conifold. Assume $\hat{\boldsymbol{\beta}}$ satisfies the
conditions
\begin{equation*}
\left\{
\begin{array}{l}
\hat{\beta}_i<0\mbox{ for all AC ends
}\hat{S}_i\in \hat{S}\\
\hat{\beta}_i>2-m\mbox{
for all CS ends }\hat{S}_i\in \hat{S}
\end{array}
\right.
\end{equation*}
so that $\Delta_{\hat{g}}$ is injective.

Let $(L,\boldsymbol{\beta})$ be a weighted CS-marked conifold. Assume
$\boldsymbol{\beta}$ satisfies the conditions
\begin{equation*}
\left\{
\begin{array}{l}
\beta_i<0\mbox{ for all AC
ends }S_i\in S\\
\beta_i>2-m\mbox{ for all CS ends
}S_i\in S.
\end{array}
\right.
\end{equation*}
This is not yet sufficient to conclude that $\Delta_g$ is
injective because the set of AC ends might be empty. To obtain injectivity we
must furthermore assume that each component of $L$ has at least one end $S'$ satisfying the condition
\begin{equation*}
\left\{
\begin{array}{l}
\beta'<0 \mbox{ if $S'$ is AC}\\
\beta'>0 \mbox{ if $S'$ is CS}.
\end{array}
\right.
\end{equation*}

Now assume that $L$, $\hat{L}$ are compatible and $\tau$ is sufficiently close to 1, as in Lemma \ref{l:induced_metric}. Then, for all ends $S_i\in S^*$,
$2-m<\beta_i<0$. This implies that, for each component, $S'\in S^{**}$ so
$L_{\boldsymbol{t}}$ has at least one end. Furthermore,
$\boldsymbol{\beta}_{\boldsymbol{t}}$ satisfies the conditions
\begin{equation*}
\left\{
\begin{array}{l}
\boldsymbol{\beta}_{{\boldsymbol{t}}|S_i}<0\mbox{ for all AC ends
}S_i\in \hat{S}^{**}\cup S^{**}\\
\boldsymbol{\beta}_{{\boldsymbol{t}}|S_i}>2-m\mbox{
for all CS ends }S_i\in \hat{S}^{**}\cup S^{**}.
\end{array}
\right.
\end{equation*}
Together with the condition on $S'$, this implies that $\Delta_{g_{\boldsymbol{t}}}$ is injective. 

If furthermore $\boldsymbol{\beta}$, $\hat{\boldsymbol{\beta}}$ are
non-exceptional for $\Delta_{g}$,
$\Delta_{\hat{g}}$ then 
$$\Delta_{g_{\boldsymbol{t}}}:W^p_{k,\boldsymbol{\beta}_{\boldsymbol{t}}}(L_{\boldsymbol{t}})
\rightarrow W^p_{k-2,\boldsymbol{\beta}_{\boldsymbol{t}}-2}(L_{\boldsymbol{t}}) $$
is a topological isomorphism onto its image and there exists $C>0$ such that, for all $\boldsymbol{t}$ and
all $f\in W^p_{k,\boldsymbol{\beta}_{\boldsymbol{t}}}(L_{\boldsymbol{t}})$,
$$\|f\|_{W^p_{k,\boldsymbol{\beta}_{\boldsymbol{t}}}}\leq C\|\Delta_{g_{\boldsymbol{t}}}
f\|_{W^p_{k-2,\boldsymbol{\beta}_{\boldsymbol{t}}-2}}.$$
\end{theorem}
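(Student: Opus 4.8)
The plan is to combine three ingredients: the injectivity of $\Delta_g$, $\Delta_{\hat g}$ established by Corollary~\ref{cor:laplaceresults}; the abstract conifold gluing estimates for the Laplace operator from \cite{pacini:ics}; and the uniform Sobolev embedding of Theorem~\ref{thm:normstequivalent}. First I would verify the assertions made in the statement about the weights: since $S^*$ is a CS-marking on $L$ and $\hat S^*$ is an AC-marking on $\hat L$, compatibility forces $\beta_i=\hat\beta_i$ on each marked pair; the assumption $\hat\beta_i<0$ on AC ends of $\hat L$ (applied to $\hat S^*$) thus gives $\beta_i<0$ on $S^*$, while the CS-side hypothesis $\beta_i>2-m$ gives $2-m<\beta_i<0$ there. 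Because each component of $L$ was assumed to have at least one end $S'$ with the sign condition of the statement, and such an $S'$ cannot lie in $S^*$ (there $\beta_i<0$ on a CS end contradicts $\beta'>0$), we get $S'\in S^{**}$; since $S^{**}$ survives into $L_{\boldsymbol t}$, every component of $L_{\boldsymbol t}$ retains an end of the good type. The description of $\boldsymbol\beta_{\boldsymbol t}$ on $\hat S^{**}\cup S^{**}$ then follows directly from the definition of the glued weight, as the weight is simply inherited from $\hat L$ or $L$ there.

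Next I would establish injectivity of $\Delta_{g_{\boldsymbol t}}$. The point is that $L_{\boldsymbol t}$ is itself a conifold (Section~\ref{s:con_review}), its ends are exactly $\hat S^{**}\cup S^{**}$, and on each of these ends the weight $\boldsymbol\beta_{\boldsymbol t}$ satisfies $\beta<0$ on AC ends and $\beta>2-m$ on CS ends. By Corollary~\ref{cor:laplaceresults}(3), applied componentwise, the kernel of $\Delta_{g_{\boldsymbol t}}$ on such a component consists at most of the constants; the extra end $S'$ with strict sign ($\beta'<0$ if AC, $\beta'>0$ if CS) then rules out the nonzero constant, so $\Delta_{g_{\boldsymbol t}}$ is injective. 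One subtlety here is that Corollary~\ref{cor:laplaceresults} is stated for a \emph{fixed} conifold, so this step only yields injectivity for each individual $\boldsymbol t$, not yet a uniform estimate; that is exactly what the Fredholm/gluing machinery of \cite{pacini:ics} is needed for.

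For the uniform estimate I would invoke the abstract gluing theory: \cite{pacini:ics} shows that when $\Delta_g$ and $\Delta_{\hat g}$ are both invertible (here, injective with closed image) on the relevant weighted spaces and $\boldsymbol\beta,\hat{\boldsymbol\beta}$ are non-exceptional, the connect-sum operator $\Delta_{g_{\boldsymbol t}}$ is a topological isomorphism onto its image with an inverse bounded uniformly in $\boldsymbol t$. Concretely, one builds an approximate right inverse (or here, since we only want the a priori estimate, a bounded left parametrix) by patching together the inverses coming from $L$, from $\hat L$ (rescaled by $t_i^2$, using Lemma~\ref{l:rescaledlag} and the behaviour of $\Delta$ under rescaling), and a model operator on the neck $\Sigma_i\times[t_i\hat R,\epsilon]$; the cross-terms arising from cutoff functions are controlled using Lemma~\ref{l:induced_metric} — which guarantees $g_{\boldsymbol t}$ is scaled-equivalent to the conical metric across the neck with errors tending to $0$ — together with the uniform Sobolev embeddings of Theorem~\ref{thm:normstequivalent}, and the fact that $\tau$ is chosen close enough to $1$ so that the neck length in logarithmic scale and the metric errors are harmless. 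Non-exceptionality of $\boldsymbol\beta,\hat{\boldsymbol\beta}$ enters to ensure the Fredholm/regularity theory of Theorem~\ref{thm:laplaceresults} applies on both pieces, so that the glued operator has closed image and the estimate $\|f\|_{W^p_{k,\boldsymbol\beta_{\boldsymbol t}}}\le C\|\Delta_{g_{\boldsymbol t}}f\|_{W^p_{k-2,\boldsymbol\beta_{\boldsymbol t}-2}}$ holds with $C$ independent of $\boldsymbol t$.

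The main obstacle is the uniformity in $\boldsymbol t$ of the final estimate: pointwise injectivity is cheap, but one must rule out the appearance of small eigenvalues of $\Delta_{g_{\boldsymbol t}}$ as $\boldsymbol t\to 0$. This is precisely where the careful choice of how the neck regions are parametrized — and the matching of weights $\beta_i=\hat\beta_i$ across each neck, so that no stray powers of $t_i$ enter the two sides of the embedding (cf.\ the remark preceding Lemma~\ref{l:changing_weight_estimates}) — becomes essential, and it is the content imported from \cite{pacini:ics}. I would therefore organize the write-up so that this delicate point is reduced, as cleanly as possible, to citing the abstract uniform-invertibility result there, after checking that its hypotheses (injectivity on each side, non-exceptional weights, compatible weighted markings, $\tau$ close to $1$) are all in place.
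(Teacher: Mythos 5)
Your proposal is correct and follows essentially the same route as the paper: the weight bookkeeping and pointwise injectivity via Corollary \ref{cor:laplaceresults}, then a cut-off decomposition of $f$ into an $L$-piece and a rescaled $\hat{L}$-piece whose separate estimates recombine uniformly because $\beta_i=\hat{\beta}_i$ makes the $t_i$-powers from the norm and from $\Delta_{t_i^2\hat{g}}=t_i^{-2}\Delta_{\hat{g}}$ cancel, with the commutator error absorbed via a logarithmic cut-off producing an $O(1/|\log t|)\|f\|$ term, details deferred to \cite{pacini:ics} exactly as the paper does. The only cosmetic difference is that the paper splits $f$ directly into two pieces rather than patching a three-piece parametrix with a separate neck model operator.
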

\begin{proof}
 For later reference we give a sketch of the proof, referring to \cite{pacini:ics} for details. The basic idea is to choose constants $a$, $b$ satisying $0<b<a<\tau$ and a cut-off function $\eta_t$ supported in the interval $[t^a,t^b]\subset (2t^\tau, \epsilon)$ so that, for any $f\in W^p_{k,\boldsymbol{\beta}_{\bt}}$, $\eta_tf$ has support in $L$ and $(1-\eta_t)f$ has support in $\hat{L}$. One then argues that 
\begin{equation}
\begin{split}
 \|\eta_tf\|_{W^p_{k,\boldsymbol{\beta}_{\bt}}(g_{\bt})}&=\|\eta_tf\|_{W^p_{k,\boldsymbol{\beta}}(g)}\leq C\|\Delta_g(\eta_tf)\|_{W^p_{k-2,\boldsymbol{\beta}-2}(g)}\\
&=C\|\Delta_{g_{\bt}}(\eta_tf)\|_{W^p_{k-2,\boldsymbol{\beta}_{\bt}-2}(g_{\bt})}.\\
\end{split}
\end{equation}
Likewise, on each component of $\hat{L}$,
\begin{equation}\label{eq:1-eta}
\begin{split}
 \|(1-\eta_t)f\|_{W^p_{k,\boldsymbol{\beta}_{\bt}}(g_{\bt})}&\simeq\|(1-\eta_t)f\|_{W^p_{k,\hat{\boldsymbol{\beta}}}(t_i^2\hat{g})}=t_i^{-\beta_i}\|(1-\eta_t)f\|_{W^p_{k,\hat{\boldsymbol{\beta}}}(\hat{g})}\\
&\leq Ct_i^{-\beta_i}\|\Delta_{\hat{g}}((1-\eta_t)f)\|_{W^p_{k-2,\hat{\boldsymbol{\beta}}-2}(\hat{g})}\\
&=Ct_i^{2-\beta_i}\|\Delta_{t_i^2\hat{g}}((1-\eta_t)f)\|_{W^p_{k-2,\hat{\boldsymbol{\beta}}-2}(\hat{g})}\\
&\simeq C\|\Delta_{g_{\bt}}((1-\eta_t)f)\|_{W^p_{k-2,\boldsymbol{\beta}_{\bt}-2}(g_{\bt})}.\\
\end{split}
\end{equation}
If $\eta_t$ is chosen carefully, one can estimate $\|\Delta_{g_{\bt}}(\eta_tf)\|$ and $\|\Delta_{g_{\bt}}((1-\eta_t)f)\|$ in terms of $\|\Delta_{g_{\bt}} f\|$, generating only a very small error term of the form $(1/|\log t|)\|f\|_{W^p_{k,\boldsymbol{\beta}_{\bt}}}$. Combining these estimates gives the desired result.
\end{proof}

%%%%%%%%%%%%%%%%%%%%%%%%%%%%%%%%%%%%%%%%%%%%%%%%%%%%%%%%%%%%%%
%%%%%%%%%%%%%%%%%%%%%%%%%%%%%%%%%%%%%%%%%%%%%%%%%%%%%%%%%%%%%%
%%%%%%%%%%%%%%%%%%%%%%%%%%%%%%%%%%%%%%%%%%%%%%%%%%%%%%%%%%%%%%

\section{Special Lagrangian conifolds in $\C^m$}\label{s:SL_defs}

\begin{definition}\label{def:cy} A \textit{Calabi-Yau} (CY) manifold is the data
of a K\"ahler manifold ($M^{2m}$,$g$,$J$,$\omega$) and a non-zero ($m,0$)-form
$\Omega$ satisfying $\nabla\Omega\equiv 0$ and normalized by the condition
$\omega^m/m!=(-1)^{m(m-1)/2}(i/2)^m\Omega\wedge\bar{\Omega}$. In particular $\Omega$ is holomorphic and the holonomy of $(M,g)$ is contained
in $\sunitary m$. We will refer to $\Omega$ as the \textit{holomorphic volume
form} on $M$.

Let $M^{2m}$ be a CY manifold and $L^m\rightarrow M$ be an immersed or embedded
Lagrangian submanifold. We can restrict $\Omega$ to $L$, obtaining a
non-vanishing complex-valued $m$-form $\Omega_{|L}$ on $L$. We say that $L$ is
\textit{special Lagrangian} (SL) if and only if this form is real, \textit{i.e.}
$\Imag\,\Omega_{|L}\equiv 0$. In this case $\Real\,\Omega_{|L}$ defines a volume
form on $L$, thus a natural orientation.

$\Real\,\Omega_{|L}$ is actually a ``calibration'' on $M$ in the sense of \cite{harveylawson}. It follows that any SL is volume-minimizing in its homology class.
\end{definition}

\begin{example}\label{e:C^m_examples}
 The simplest example of a CY manifold is $\C^m$ with its standard structures
$\tilde{g}$, $\tilde{J}$, $\tilde{\omega}$ and
$\tilde{\Omega}:=dz^1\wedge\dots\wedge dz^m$. We are interested in SL conifolds in $\C^m$. Because of the possibility of dilations, it is clear that $\C^m$ does not admit compact volume-minimizing submanifolds, even with singularities. It follows that each connected component of a SL conifold in $\C^m$ must contain at least one AC end. This is particularly relevant for Theorem \ref{thm:sum_injective}: using appropriate weights on these AC ends  allows us to assume that the Laplace operator is injective.

Cones with one isolated singularity at the origin are the most basic class of SL conifolds in $\C^m$, cf. \cite{harveylawson},
\cite{haskins}, \cite{haskinskapouleas}, \cite{haskinspacini}, \cite{joyce:symmetries} for examples. AC SLs are a  second important class. We will restrict our attention to the following two examples, of interest to us because their convergence rates satisfy the assumption $\boldsymbol{\lambda}<0$.
\begin{enumerate}
 \item Let $\Pi$ be any SL plane in $\C^m$, \textit{e.g.} the standard $\R^m$ generated by the vectors $\partial x_i$. Choose $M\in\unitary m$ of the form 
$M=\mbox{Diag}(e^{i\theta_1},\dots,e^{i\theta_m})$, where $\sum\theta_i=\pi$. The union $\Pi\cup-(M\cdot\Pi)$ is a SL cone with an isolated singularity in the origin. Lawlor \cite{lawlor} constructed an AC SL with 2 ends, diffeomorphic to $\Sph^{m-1}\times\R$, asymptotic to $\Pi\cup-(M\cdot\Pi)$ and with rate $\boldsymbol{\lambda}=2-m$ as $r\rightarrow \infty$. This submanifold is known as the ``Lawlor neck''. We refer to \cite{joyce:V} Example 6.11 for details.
\item Let $\mathcal{C}$ be a SL cone in $\C^m$ with an isolated singularity in the origin. Let $\Sigma$ denote its link. Given any $c>0$ define the curve $\gamma_c\simeq\R$ via the equations
\begin{equation*}
\gamma_c:=\{\lambda\in\C:\Imag{(\lambda^m)}=c^m,\ \ \arg{\lambda}\in(0,\pi/m)\}.
\end{equation*}
Then the immersion
\begin{equation*}
 \iota_c:\Sigma\times\gamma_c\rightarrow \C^m,\ \ (\theta,\lambda)\mapsto \lambda\theta
\end{equation*}
defines an AC SL with two ends, diffeomorphic to $\Sigma\times\R$, asymptotic to $\mathcal{C}\cup -(e^{i\pi/m}\mathcal{C})$ and with rate $\boldsymbol{\lambda}=2-m$ as $r\rightarrow\infty$. This construction was found independently by Castro-Urbano, Haskins and Joyce, cf. \cite{joyce:symmetries} Theorem 6.4 for details.
\end{enumerate}
\end{example}

Let $(L_{\bt},\iota_{\bt})$ be a Lagrangian connect sum in $\C^m$. We want to find a way to detect whether any small Lagrangian deformations of $(L_{\bt},\iota_{\bt})$ are SL. 
The results we obtain here will be applied in Section \ref{s:SL_sum}. For those purposes, compared to previous sections, we can restrict our attention to ``exact'' Lagrangian deformations, defined via functions. As usual, this class of deformations is defined by a choice of weight $\boldsymbol{\beta}$ which we will always assume satisfies the following bounds: $\beta_i<2$ on each AC end, $\beta_i>2$ on each CS end. In this section, which aims only to set the problem up correctly, we will see that various choices of such $\boldsymbol{\beta}$ are possible as long as they are coupled with appropriate additional conditions on the ends of $(L,\iota)$. In particular, Lemma \ref{lemma:extra} presents the two most natural cases. All results in this section will hold in both cases.
On the other hand, actually finding a SL deformation, as in Section \ref{s:SL_sum}, will be possible only if we choose the stronger of the two sets of assumptions.

As usual, it is useful to build up to the problem slowly, dividing it into several steps. We thus start by examining the analogous problem for Lagrangian conifolds $(L,\iota)$ which have no $\bt$-dependence. In what follows we will repeatedly use the following abstract result concerning Taylor expansions.

\begin{lemma}\label{l:taylor}
 Let $(V, |\cdot|)$ be a normed vector space. Let $F:U\rightarrow \R$ be a smooth function defined on a convex subset $U\subset V$. Write
\begin{equation*}
 F(x)=F(0)+dF_{|0}\,x+Q(x),
\end{equation*}
 for some smooth $Q:U\rightarrow \R$. Assume $C:=\sup_{x\in U}|d^2F_{|x}|<\infty$, where 
\begin{equation*}
|d^2F_{|x}|:=\sup_{|v|\leq 1}\sup_{|w|\leq 1}|d^2F_{|x}(v,w)|.
\end{equation*} 
Then:
\begin{enumerate}
 \item $|Q(x)|\leq (C/2)|x|^2$,
\item $|dQ_{|x}|\leq C|x|$,
\item $|Q(x)-Q(y)|\leq (C/2)|x-y|\left(|x|+|y|\right)$.
\end{enumerate}
In particular, if we substitute $|\cdot|$ with $|\cdot|_t:=t|\cdot|$ then the corresponding constant satisfies $C_t=t^{-2}C$ so the above estimates do not change. In this sense these estimates are scale-invariant.
\end{lemma}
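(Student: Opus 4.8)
The plan is to derive all three bounds, together with the scale-invariance remark, from the integral form of Taylor's theorem applied along the line segments joining the relevant points; these segments lie in $U$ by convexity (note that $0\in U$ is implicit in writing $F(0)$ and $dF_{|0}$).

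First I would fix $x\in U$ and set $g(s):=F(sx)$ for $s\in[0,1]$, so that $g'(s)=dF_{|sx}\,x$ and $g''(s)=d^2F_{|sx}(x,x)$. Taylor's formula with integral remainder then identifies
\[
Q(x)=F(x)-F(0)-dF_{|0}\,x=\int_0^1(1-s)\,d^2F_{|sx}(x,x)\,ds,
\]
and bounding the integrand by $C|x|^2$ together with $\int_0^1(1-s)\,ds=1/2$ gives part (1). For part (2), I would use the identity $dQ_{|x}=dF_{|x}-dF_{|0}$ and apply the fundamental theorem of calculus to $s\mapsto dF_{|sx}\,v$, which yields $dQ_{|x}\,v=\int_0^1 d^2F_{|sx}(x,v)\,ds$ and hence $|dQ_{|x}\,v|\leq C|x|\,|v|$. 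For part (3), I would apply the fundamental theorem of calculus to $s\mapsto Q\bigl(y+s(x-y)\bigr)$ along the segment from $y$ to $x$, obtaining $Q(x)-Q(y)=\int_0^1 dQ_{|y+s(x-y)}(x-y)\,ds$; then, combining part (2) with the estimate $|y+s(x-y)|\leq(1-s)|y|+s|x|$ and integrating in $s$ produces the bound $(C/2)|x-y|(|x|+|y|)$.

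For the scale-invariance remark, I would simply observe that replacing $|\cdot|$ by $|\cdot|_t=t|\cdot|$ shrinks the unit ball in each of the two arguments of the bilinear form $d^2F_{|x}$ by a factor $t$, so by bilinearity the corresponding operator norm satisfies $C_t=t^{-2}C$; substituting $C_t$ and $|\cdot|_t$ into the inequalities (1)--(3) then reproduces exactly the same inequalities expressed in $|\cdot|$.

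There is no serious obstacle here: the statement is essentially a convenient packaging of standard Taylor estimates. The only points requiring a little care are keeping all the auxiliary segments inside $U$ — which is precisely what convexity (with $0\in U$) provides — and, in the final remark, correctly tracking how the operator norm of a bilinear form transforms under a rescaling of the norm on $V$.
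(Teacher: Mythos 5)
Your proposal is correct and follows essentially the same route as the paper: integrate along the straight segments (which convexity keeps inside $U$), bound the second derivative by $C$, and in (3) combine the gradient bound from (2) with $|y+s(x-y)|\leq(1-s)|y|+s|x|$. The only cosmetic difference is that you use the $(1-s)$-weighted single-integral remainder where the paper writes the equivalent iterated double integral, and your explicit justification of $C_t=t^{-2}C$ via the rescaled unit balls is a welcome touch the paper leaves implicit.
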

\begin{proof}
 Let $\gamma(t):=tx$ be the geodesic segment connecting $0$ to $x$, so that $|\gamma'|\equiv |x|$. Then
\begin{equation*}
 \begin{split}
  Q(x)&=F(x)-F(0)-dF_{|0}\,x=\int_0^1(F\circ\gamma)'(t)\,dt-\int_0^1(F\circ\gamma)'(0)\,dt\\
&=\int_0^1\int_0^t (F\circ\gamma)''(s)\,ds\,dt=\int_0^1\int_0^td^2F_{|\gamma(s)}\cdot(\gamma'(s))^2\,ds\,dt\\
&\leq \int_0^1\int_0^t|d^2F_{|\gamma(s)}|\cdot|\gamma'(s)|^2\,ds\,dt\leq (C/2)|x|^2. 
\end{split}
\end{equation*}
This proves (1). Regarding (2), 
\begin{equation*}
 \begin{split}
  dQ_{|x}=dF_{|x}-dF_{|0}=\int_0^1\frac{d}{dt}dF_{|\gamma(t)}\,dt=\int_0^1d^2F_{|\gamma(t)}\cdot\gamma'(t)\,dt\leq \int_0^1C|\gamma'|\,dt=C|x|.\\
 \end{split}
\end{equation*}
This proves (2). Regarding (3), let $\gamma(t):=tx+(1-t)y$ be the geodesic segment connecting $y$ to $x$ so that $|\gamma'|\equiv |x-y|$. Then:
\begin{equation*}
 \begin{split}
  Q(x)-Q(y)&=\int_0^1\frac{d}{dt}Q(\gamma(t))\,dt=\int_0^1dQ_{|\gamma(t)}\gamma'(t)\,dt\\
&\leq\int_0^1C|\gamma(t)|\cdot|\gamma'(t)|\,dt\leq C|x-y|\int_0^1|\gamma(t)|\,dt\\
&\leq C|x-y|\int_0^1(t|x|+(1-t)|y|)\,dt.\\
 \end{split}
\end{equation*}
Calculating the integral proves (3).
\end{proof}
\begin{remark}\label{rem:taylor}
As a special case of Lemma \ref{l:taylor}, assume given normed spaces $(Y,|\cdot|)$ and $(Z,|\cdot|)$. Set $V:=Y\oplus Z$, endowed with the sum of the norms. Let $\partial_1$, $\partial_2$ denote the corresponding directional derivatives. Define $C_{ij}:=\sup_{x\in U}|\partial_i\partial_jF_{|x}|$. Then the inequalities in Lemma \ref{l:taylor} can be slightly refined, \textit{e.g.}:
\begin{equation*}
 \begin{split}
  |Q(y,z)|&\leq (C_{11}/2)|y|^2+C_{12}|y||z|+(C_{22}/2)|z|^2,\\
 |\partial_1Q_{|(y,z)}|&\leq C_{11}|y|+C_{21}|z|,\\
|\partial_2Q_{|(y,z)}|&\leq C_{12}|y|+C_{22}|z|.\\
 \end{split}
\end{equation*}
In particular, using the same notation and methods as in the proof of Lemma \ref{l:taylor} (3),
\begin{equation*}
 \begin{split}
  Q(y,z)-Q(y',z')&=\int_0^1\partial_1Q_{|\gamma(t)}\cdot\gamma_1'(t)\,dt+\int_0^1\partial_2Q_{|\gamma(t)}\cdot\gamma'_2(t)\,dt\\
&\leq |y-y'|\int_0^1|\partial_1Q_{|\gamma(t)}|\,dt+|z-z'|\int_0^1|\partial_2Q_{|\gamma(t)}|\,dt\\
&\leq |y-y'|\Big((C_{11}/2)(|y|+|y'|)+(C_{21}/2)(|z|+|z'|)\Big)\\
&\ \ \ +|z-z'|\Big((C_{12}/2)(|y|+|y'|)+(C_{22}/2)(|z|+|z'|)\Big).\\
 \end{split}
\end{equation*}
\end{remark}

\subsubsection*{Lagrangian conifolds}
Let $(L,\iota)$ be a AC or CS/AC Lagrangian conifold in $\C^m$; as seen above, there is no point in searching for SL deformations of a CS Lagrangian submanifold in $\C^m$. Fix a weight $\boldsymbol{\beta}$ and let $B_r$ denote the ball centered in $0$ and of radius $r$ in $W^p_{3,\boldsymbol{\beta}}(L)$. Consider the map 
\begin{equation}\label{eq:Fsobolev}
F:B_{r}\subset W^p_{3,\boldsymbol{\beta}}(L)\rightarrow W^p_{1,\boldsymbol{\beta}-2}(L), \ \
f\mapsto \star((\Phi_{L}\circ df)^*\Imag\,\tilde{\Omega}),
\end{equation}
where $\star$ denotes the Hodge star operator of $L$. According to Definition \ref{def:cy}, the deformed immersion $(L,\Phi_L\circ df)$ is SL if and only if $F(f)=0$.

We need to find conditions on $\boldsymbol{\beta}$ and on $(L,\iota)$ ensuring that this map is well-defined. It is instructive to include an analogous discussion for $F$ defined between spaces of smooth functions because it is somewhat more transparent and contains all the main ideas.

\begin{lemma} \label{lemma:extra}
Let $(L,\iota)$ be a Lagrangian conifold in $\C^m$ with rates $(\boldsymbol{\mu},\boldsymbol{\lambda})$. Choose a weight $\boldsymbol{\beta}$ and consider the map 
\begin{equation*}
 F:B_{r}\subset C^\infty_{\boldsymbol{\beta}}(L)\rightarrow C^{\infty}_{\boldsymbol{\beta}-2}(L), \ \
f\mapsto \star((\Phi_{L}\circ df)^*\Imag\,\tilde{\Omega}).
\end{equation*}
Then, under either of the following conditions, this map is well-defined (for $r$ sufficiently small):
\begin{enumerate}
 \item Assume the asymptotic cones of $(L,\iota)$ are SL and $\boldsymbol{\beta}$ satisfies $\mu_i\geq\beta_i>2$ on each CS end, $\lambda_i\leq\beta_i<2$ on each AC end.
\item Assume the ends of $(L,\iota)$ are SL and $\boldsymbol{\beta}$ satisfies $\beta_i>\mu_i>2$ on each CS end, $\beta_i<\lambda_i<2$ on each AC end.
\end{enumerate}
Now assume $p>m$. Then, in both cases, the map $F$ introduced in Equation \ref{eq:Fsobolev} is well-defined.
 \end{lemma}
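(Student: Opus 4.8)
The plan is to \textbf{localize the analysis}. On the compact core $K:=L\setminus\bigcup S_i$ the map $\Phi_L$ is a genuine symplectomorphism defined on an honest open set, so $F(f)=\star\big((\Phi_L\circ df)^*\Imag\,\tilde\Omega\big)$ depends smoothly and fibrewise on $(f,\nabla f,\nabla^2 f)$ and is obviously a smooth function on $K$ once $\|f\|$ is small. Thus the whole content of the lemma lies in the behaviour on each end $S_i$. There I would use the description of Section \ref{ss:conifold_defs}: identifying $S_i$ with $\Sigma_i\times(0,\epsilon)$ (resp. $\Sigma_i\times[R,\infty)$), we have $\iota=\Phi_{\mathcal{C}_i}\circ\alpha_i+p_i$ for a closed $1$-form $\alpha_i\in C^\infty_{\mu_i-1}(\mathcal{U})$ (resp. $C^\infty_{\lambda_i-1}(\mathcal{U})$) and $\Phi_L=\Phi_{\mathcal{C}_i}\circ\tau_{\alpha_i}+p_i$. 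Since $\Imag\,\tilde\Omega$ is translation-invariant, $F(f)$ restricted to $S_i$ equals $\mathcal{F}_i(\alpha_i+df)$, where $\mathcal{F}_i(\beta):=\star\big((\Phi_{\mathcal{C}_i}\circ\beta)^*\Imag\,\tilde\Omega\big)$ is defined for $1$-forms $\beta$ on the cone lying in $\mathcal{U}$; here $\star$ may be taken with respect to $\tg_i$, at the cost of lower-order terms coming from the scaled-equivalence of $g$ and $\tg_i$ (Remark \ref{rem:equivalentscaledmetrics}).

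The key structural facts about the conical operator $\mathcal{F}_i$, which I would quote from the conical deformation theory (\cite{joyce:I}, \cite{pacini:sldefs}) together with the $\R^+$-equivariance of $\Phi_{\mathcal{C}_i}$ under the action \ref{eq:action} and the homogeneity $t^*\Imag\,\tilde\Omega=t^m\Imag\,\tilde\Omega$, are: (a) $\mathcal{F}_i(0)=\star\,\iota_i^*\Imag\,\tilde\Omega$ vanishes identically precisely when $\mathcal{C}_i$ is special Lagrangian; (b) in the Taylor expansion $\mathcal{F}_i(\beta)=\mathcal{F}_i(0)+d(\mathcal{F}_i)_0\,\beta+\mathcal{Q}_i(\beta)$, the linear term satisfies $d(\mathcal{F}_i)_0\circ d=\pm\Delta_{\tg_i}$ modulo lower order (McLean's formula), and each Taylor term is $\R^+$-homogeneous of the appropriate degree, so that $\mathcal{F}_i$ maps $C^\infty_{\gamma-1}(\mathcal{U})$ into $C^\infty_{\gamma-2}$ while $\mathcal{Q}_i$ lands in a strictly better weighted space. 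The scale-invariance built into Lemma \ref{l:taylor} and Remark \ref{rem:taylor} is exactly what makes the quadratic estimate for $\mathcal{Q}_i$ uniform: one rescales each annulus $\Sigma_i\times\{r\}$ to unit size, where $\hat{\mathcal{F}}_i$ becomes a fixed smooth map on a fixed ball.

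With this in hand I would assemble the two cases. In Case (1) the cones are SL, so $\mathcal{F}_i(0)\equiv 0$; on a CS end $F(0)=\mathcal{F}_i(\alpha_i)\in C^\infty_{\mu_i-2}\subseteq C^\infty_{\beta_i-2}$ because $\mu_i\geq\beta_i$ (on an AC end $\lambda_i\leq\beta_i$ gives the same, the inequality flipping as $r\to\infty$), while $F(f)|_{S_i}=d(\mathcal{F}_i)_0(\alpha_i+df)+\mathcal{Q}_i(\alpha_i+df)$ has linear part in $C^\infty_{\min(\mu_i,\beta_i)-2}=C^\infty_{\beta_i-2}$ and quadratic part in a strictly better space, hence in $C^\infty_{\beta_i-2}$ (using $\beta_i>2$). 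In Case (2) the end $\iota|_{S_i}$ itself is SL, so $\mathcal{F}_i(\alpha_i)=0$ and $F(0)$ is supported in $K$; expanding $\mathcal{F}_i$ around $\alpha_i$ instead, $F(f)|_{S_i}=d(\mathcal{F}_i)_{\alpha_i}(df)+\widetilde{\mathcal{Q}}_i(df)$, where the linearization at the SL end is $\Delta_g$ modulo lower order ($g$ the induced metric, scaled-equivalent to $\tg_i$), giving $d(\mathcal{F}_i)_{\alpha_i}(df)\in C^\infty_{\beta_i-2}$, and $\widetilde{\mathcal{Q}}_i(df)$ is quadratic in $df=O(r^{\beta_i-1})$ and therefore again in $C^\infty_{\beta_i-2}$; the stated ranges $\beta_i>\mu_i$, $\beta_i<\lambda_i$ are simply the ones complementary to Case (1). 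Choosing $r$ small enough that $\alpha_i+df\in\mathcal{U}$ on every end, we conclude that $F:B_r\subset C^\infty_{\boldsymbol\beta}(L)\to C^\infty_{\boldsymbol\beta-2}(L)$ is well-defined.

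For the Sobolev statement with $p>m$: by Theorem \ref{thm:embedding}(3) one has $W^p_{3,\boldsymbol\beta}(L)\hookrightarrow C^2_{\boldsymbol\beta}(L)$, so $f$ and $\nabla f$ are bounded and continuous and $\alpha_i+df$ still lies in $\mathcal{U}$ for $\|f\|_{W^p_3}$ small; moreover $F$ is given by a fibrewise formula $F(f)=a_0(x,f,\nabla f)+a_1(x,f,\nabla f)\cdot\nabla^2 f$ with $a_0,a_1$ smooth (obtained by unwinding $\Phi_{\mathcal{C}_i}=\Psi_{\mathcal{C}_i}+R_i$ and the pullback). Composing the smooth $a_j$ with the bounded $C^2$-section $(f,\nabla f)$ keeps them in $W^p_{1,\cdot}$ with the same weight structure used above, and the weighted multiplication inequality of Theorem \ref{thm:embedding} (valid since $p>m$, with $k=0$, $l=1$) then gives $a_1\nabla^2 f\in W^p_{1,\boldsymbol\beta-2}(L)$ and $a_0\in W^p_{1,\boldsymbol\beta-2}(L)$, hence $F(f)\in W^p_{1,\boldsymbol\beta-2}(L)$, as in Equation \ref{eq:Fsobolev}. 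I expect the main obstacle to be item (b) above: pinning down precisely the $\R^+$-homogeneity of the Taylor terms of $\mathcal{F}_i$ and the resulting weight shifts, since this is where the SL hypothesis on the cone (resp. on the end), the rates $\mu_i,\lambda_i$, and the scale-invariant Taylor estimate all have to be combined correctly; once this weight bookkeeping is clean, both cases and the Sobolev upgrade are essentially formal.
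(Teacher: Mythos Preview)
Your argument is correct in substance and reaches the same conclusion, but it is organized differently from the paper's proof. The paper does not Taylor-expand the nonlinear operator $\mathcal{F}_i$ at all for this lemma. Instead it argues directly at the level of immersions: given $f\in B_r$, the map $\iota':=\Phi_L\circ df$ is a Lagrangian conifold which (in Case~(1)) converges to the cone $\iota_i$ with rate $\beta_i$, respectively (in Case~(2)) converges to $\iota$ itself with rate $\beta_i$; writing $\iota'_*=\iota_{i*}+R_*$ (resp.\ $\iota_*+R_*$) and expanding $\Imag\,\tilde\Omega$ by multilinearity, the leading term vanishes because the reference immersion is SL and the remainder is $O(r^{\beta_i-2})$. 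This is shorter and avoids invoking the $\R^+$-equivariance machinery and the quadratic estimates, which the paper only develops afterwards in Proposition~\ref{prop:Fproperties}. Your route essentially front-loads that later structure; it works, but it makes the lemma look harder than it is, and the ``weight bookkeeping'' you flag as the main obstacle becomes almost trivial in the paper's formulation, since the only input is $|\tnabla^k R|=O(r^{\beta_i-1-k})$.

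One small correction in your Sobolev paragraph: $F'(x,y,z)$ is \emph{not} affine in $z=\nabla^2 f$. Each pushed-forward vector $(\Phi_L)_*(i_{e_j}z,e_j)$ is affine in $z$, but $\Imag\,\tilde\Omega$ is $m$-linear, so $F'$ is a polynomial of degree $m$ in $z$. This does not damage your conclusion---the Banach-algebra property of $W^p_{k+l,\boldsymbol\beta}$ for $lp>m$ (Theorem~\ref{thm:embedding}) handles polynomial nonlinearities just as well as affine ones---but the decomposition $F(f)=a_0+a_1\cdot\nabla^2 f$ as written is not available.
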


\begin{proof}
Recall that the radius of $\mathcal{U}$, in the sense introduced following Equation \ref{eq:R_properties}, is proportional to $\rho$. For $r$ sufficiently small, the Sobolev Embedding Theorem ensures that the $C^2_{\boldsymbol{\beta}}$ norm of $f$ is small. Our bounds on $\boldsymbol{\beta}$, relative to $2$, yield a further embedding $C^2_{\boldsymbol{\beta}}\hookrightarrow C^2_2$ implying that the $C^2_2$ norm of $f$ is small so $df$ has small $C^1_1$ norm. In particular $\rho^{-1}df$ is small so indeed the graph of $df$ lies in $\mathcal{U}$: this proves that the composition $\Phi_L\circ df$ makes sense.

We now need to show that $F$ takes values in the space $C^{\infty}_{\boldsymbol{\beta}-2}(L)$.
The proof requires a slight generalization of the results of Section \ref{ss:conifold_defs}, allowing for weights $\boldsymbol{\beta}\neq (\boldsymbol{\mu},\boldsymbol{\lambda})$. The details are discussed in \cite{pacini:sldefsextended} Section 4.5. The result is that, given weights as in (1) and $f\in B_{r}$, the immersion $\iota':=\Phi_L\circ df$ defines a Lagrangian conifold with convergence rate $\boldsymbol{\beta}$. Choose an AC end $S_i$. Then, as in Definition \ref{def:aclagsub}, up to diffeomorphisms $\phi_i$ and translations by $p_i$, $\iota'$ has the property that, for $r\rightarrow\infty$ and all $k\geq 0$,
\begin{equation*}
|\tnabla^k(\iota'-\iota_i)|=O(r^{\beta_i-1-k}).
\end{equation*}
Set $R:=\iota'-\iota_i$ and choose a $g$-orthonormal basis $e_i$. Then
\begin{equation*}
\begin{split}
F(f)&=\star((\Phi_{L}\circ df)^*\Imag\,\tilde{\Omega})=\Imag\,\tilde{\Omega}(\iota'_*(e_1),\dots,\iota'_*(e_m))\\
&=\Imag\,\tilde{\Omega}\big((\iota_{i*}+R_*)(e_1),\dots,(\iota_{i*}+R_*)(e_m)\big)\\
&=\Imag\,\tilde{\Omega}(\iota_{i*}(e_1),\dots,\iota_{i*}(e_m))+O(r^{\beta_i-2})=O(r^{\beta_i-2}),\\
\end{split}
\end{equation*}
where we use the fact that the asymptotic cone, thus $\iota_i$, is SL. A similar proof holds for the CS ends and for higher derivatives, showing that $F$ takes values in $C^{\infty}_{\boldsymbol{\beta}-2}(L)$.

Notice that the assumption that the asymptotic cones of $(L,\iota)$ are SL would not suffice to obtain the same result for the weights of case (2): for example, $f:=0\in C^\infty_{\boldsymbol{\beta}}(L)$ but $F(0)\in C^\infty_{(\boldsymbol{\mu}-2,\boldsymbol{\lambda}-2)}(L)$. For this reason we add the stronger condition that the ends of $(L,\iota)$ be SL. Choose $f\in B_{r}$. The immersion $\iota':=\Phi_L\circ df$ then defines a Lagrangian conifold which is asymptotic to $\iota$ in a sense analogous to Definitions \ref{def:aclagsub}, \ref{def:cslagsub}: for example, on each AC end and for $r\rightarrow\infty$,
\begin{equation*}
|\tnabla^k(\iota'-\iota)|=O(r^{\beta_i-1-k}).
\end{equation*} 
The same proof then shows that $F$ is well-defined in this case also. 

The proof that the map $F$ of Equation \ref{eq:Fsobolev} is well-defined is similar: the only extra ingredient is the choice $p>m$ which ensures that our Sobolev spaces are Banach algebras, cf. Theorem \ref{thm:embedding}.
%This a straight-forward generalization of \cite{pacini:sldefsextended}, Proposition 6.5: there $(L,\iota)$ is assumed to be SL but the proof only uses the fact that the asymptotic cones are SL.

%To understand why the assumptions $\mu_i\geq\beta_i$, $\lambda_i\leq\beta_i$ are necessary, it is enough to notice that $F(0)\in C^\infty_{\boldsymbol{\mu}-2,\boldsymbol{\lambda}-2}(L)$.
\end{proof}
We can think of $F$ as being obtained from an underlying smooth function 
\begin{equation}\label{eq:F'}
F'=F'(x,y,z):\mathcal{U}\oplus(T^*L\otimes T^*L)\rightarrow \R
\end{equation}
defined as follows. Choose a point $(x,y)\in
\mathcal{U}$. Let $e_1,\dots,e_m$ be an orthonormal positive basis of $T_xL$.
Now choose any $z\in T_x^*L\otimes T_x^*L$. We use the notation $i_{e_i}z:=z(e_i,\cdot)$. The Levi-Civita connection gives a decomposition  $T_{(x,y)}\mathcal{U}\simeq T_x^*L\oplus
T_xL$. Thus the vectors $(i_{e_i}z,e_i)$ span an $m$-plane in
$T_{(x,y)}\mathcal{U}$. We then set 
\begin{equation*}\label{eq:hatF}
F'(x,y,z):=\Phi_L^*\Imag\,\tilde{\Omega}_{|(x,y)}((i_{e_1}z,e_1),\dots,(i_{e_m}z,
e_m)).
\end{equation*}
If $f$ is a function on $L$ then, for each $x\in L$, the vectors $(i_{e_i}\nabla df,e_i)$ span the tangent plane to the graph of $df$. This yields the relationship
\begin{equation*}
 F(f)_{|x}=F'(x,df_{|x},\nabla^2 f_{|x}).
\end{equation*}
For any fixed $x\in L$, $y$ and $z$ vary in the linear space
$T^*_xL\oplus(T^*_xL\otimes T^*_xL)$ so Taylor's theorem shows
\begin{equation*}\label{eq:Ftaylor}
F'(x,y,z)=F'(x,0,0)+\partial_1 F'(x,0,0)\,y+\partial_2 F'(x,0,0)\,z+Q'(x,y,z),
\end{equation*}
where $\partial_1$ denotes differentiation with respect to the variable $y$, $\partial_2$ denotes differentiation with respect to the variable $z$ and $Q'=Q'(x,y,z)$ is a smooth function.

\begin{prop}\label{prop:Fproperties}
The map $F$ has the following properties:
 \begin{enumerate}
  \item The linearization of $F$ is the map
\begin{equation*}
P:=dF[0]: W^p_{3,\boldsymbol{\beta}}(L)\rightarrow W^p_{1,\boldsymbol{\beta}-2}(L), \ \
f\mapsto d^\star\left((\star\iota^*(\Real\,\tilde{\Omega}))df\right).
\end{equation*}
We can thus write
\begin{equation*}
F(f)=F(0)+P(f)+Q(f),
\end{equation*}
where $Q(f)_{|x}:=Q'(x,df_{|x},\nabla^2f_{|x})$.
\item There exists $C>0$ such that
\begin{equation*}
\begin{split}
\|Q(f)-Q(g)\|_{W^p_{1,\boldsymbol{\beta}-2}}
&\leq C\Bigl\{\left(\|f\|_{C^2_2}+\|g\|_{C^2_2}\right)\cdot\|f-g\|_{W^p_{3,\boldsymbol{\beta}}}\\
&\ \ \ \ \ \ \ +\left(\|f\|_{W^p_{3,\boldsymbol{\beta}}}+\|g\|_{W^p_{3,\boldsymbol{\beta}}}\right)\cdot\|f-g\|_{C^2_2}\Bigr\}\\
&\leq Cr\cdot\|f-g\|_{W^p_{3,\boldsymbol{\beta}}}.
\end{split}
\end{equation*}
 \end{enumerate}
\end{prop}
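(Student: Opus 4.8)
The plan is to deduce both statements from the finite-dimensional Taylor expansion of $F'$ recorded above, together with a first-variation computation for part~(1) and a fibrewise application of Lemma~\ref{l:taylor}/Remark~\ref{rem:taylor} for part~(2). For part~(1) I would identify $dF[0]$ with $P$ via the first-variation formula for $\Imag\tilde\Omega$, as in \cite{joyce:I} (see also \cite{pacini:sldefsextended}). Concretely: for fixed $f$ set $\Psi_s:=\Phi_L\circ d(sf)$, so $\Psi_0=\iota$ and $\dot\Psi_0$ is the variation field $V_f:=\Phi_{L*}(df)$ along $\iota(L)$, characterized (up to a vector field tangent to $\iota(L)$, which drops out of the computation) by $\iota^*(\iota_{V_f}\tilde\omega)=df$. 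Since $d\tilde\Omega=0$, Cartan's formula reduces the linearization to
\begin{equation*}
 dF[0](f)=\star\,\frac{d}{ds}\Big|_{s=0}\Psi_s^*\,\Imag\tilde\Omega=\star\,d\big((\iota_{V_f}\Imag\tilde\Omega)_{|L}\big),
\end{equation*}
and the pointwise identity $(\iota_{V_f}\Imag\tilde\Omega)_{|L}=-(\star\iota^*\Real\tilde\Omega)\,\star df$ (McLean's computation; as a check, for $\iota$ special Lagrangian it reduces to $-\star df$, so the right-hand side becomes $\Delta f$) then gives, using $\star(\phi\,df)=\phi\,\star df$ and $d^\star=-\star d\star$ on $1$-forms, $dF[0](f)=d^\star\big((\star\iota^*\Real\tilde\Omega)\,df\big)=P(f)$. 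The decomposition $F(f)=F(0)+P(f)+Q(f)$ with $Q(f)_{|x}=Q'(x,df_{|x},\nabla^2f_{|x})$ is then read off from the identity $F(f)_{|x}=F'(x,df_{|x},\nabla^2f_{|x})$ and the Taylor expansion of $F'$: the $(y,z)$-constant term equals $F(0)_{|x}$, the sum of the two linear terms evaluated at $(df_{|x},\nabla^2f_{|x})$ is a second-order linear expression in $f$ which must agree with $dF[0](f)_{|x}=P(f)_{|x}$, and $Q'$ is by construction the remainder.

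For part~(2) the plan is to apply Remark~\ref{rem:taylor} fibrewise, with $Y=T^*_xL$, $Z=T^*_xL\otimes T^*_xL$ and $(y,z)=(df_{|x},\nabla^2f_{|x})$, over the neighbourhood of the zero section in which these lie when $\|f\|_{W^p_{3,\boldsymbol\beta}}\le r$; here the embedding $W^p_{3,\boldsymbol\beta}\hookrightarrow C^2_{\boldsymbol\beta}\hookrightarrow C^2_2$ of Theorem~\ref{thm:embedding} (valid because $p>m$ and $\beta_i<2$ on AC, $\beta_i>2$ on CS ends) makes $df$ and $\nabla^2f$ pointwise small. The quantitative input needed is a \emph{uniform} bound on the second and third fibre-derivatives of $F'$ on this region, $C_{ij}:=\sup|\partial_i\partial_jF'|<\infty$ and $\sup|\partial_k\partial_i\partial_jF'|<\infty$ (the third derivative enters because $\nabla\big(Q(f)-Q(g)\big)$ carries one further derivative of $f$, $g$). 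This is where the conifold structure is used: near each end $\Phi_{\mathcal{C}_i}$ is $\R^+$-equivariant, so up to the explicit $\R^+$-action and the $\rho$-scaling the function $F'$ on the fibre over $(\theta,r)$ is the fixed model at $r=1$, and the scale-invariance of the estimates in Lemma~\ref{l:taylor} makes these constants uniform as $r\to0$ on CS ends and $r\to\infty$ on AC ends; on the compact core they follow from smoothness and compactness. I expect this uniformity --- together with the weight bookkeeping that forces $F$, $P$ and $Q$ all to land in the $\boldsymbol\beta-2$ space, which is precisely where the constraints on $\boldsymbol\beta$ of Lemma~\ref{lemma:extra} are used --- to be the main technical obstacle; the remaining manipulations follow the pattern of \cite{pacini:sldefsextended}.

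Granting those bounds, Remark~\ref{rem:taylor} gives, pointwise,
\begin{equation*}
 |Q(f)-Q(g)|_{|x}\le C\big(|df-dg|+|\nabla^2(f-g)|\big)\big(|df|+|dg|+|\nabla^2f|+|\nabla^2g|\big)_{|x},
\end{equation*}
and an analogous bound for $|\nabla(Q(f)-Q(g))|_{|x}$ in which $\nabla^3f,\nabla^3g,\nabla^3(f-g)$ enter \emph{linearly}: $\nabla^3(f-g)$ paired with $\partial_2Q'(x,dg,\nabla^2g)=O(|dg|+|\nabla^2g|)$, and $\nabla^3f$ paired with $\partial_2Q'(x,df,\nabla^2f)-\partial_2Q'(x,dg,\nabla^2g)=O(|df-dg|+|\nabla^2(f-g)|)$, plus lower-order contributions. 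Multiplying by the weight $w\rho^{\,j}$ and applying H\"older --- with the factor carrying the difference $f-g$ (through at most $\nabla^3$) measured in $L^p$ and every other factor measured in $L^\infty$, i.e.\ in the $C^2_2$-norm --- produces
\begin{align*}
\|Q(f)-Q(g)\|_{W^p_{1,\boldsymbol{\beta}-2}}\le C\Big\{&\big(\|f\|_{C^2_2}+\|g\|_{C^2_2}\big)\,\|f-g\|_{W^p_{3,\boldsymbol{\beta}}}\\
&{}+\big(\|f\|_{W^p_{3,\boldsymbol{\beta}}}+\|g\|_{W^p_{3,\boldsymbol{\beta}}}\big)\,\|f-g\|_{C^2_2}\Big\},
\end{align*}
the products being controlled via the embedding above and its multiplicativity (Theorem~\ref{thm:embedding}). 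Finally, on $B_r$ one has $\|f\|_{C^2_2},\|g\|_{C^2_2}\le Cr$ and $\|f-g\|_{C^2_2}\le C\|f-g\|_{W^p_{3,\boldsymbol\beta}}$, so both braced terms are $\le Cr\,\|f-g\|_{W^p_{3,\boldsymbol\beta}}$, which is the final inequality.
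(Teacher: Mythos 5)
Your treatment of part (1) by a direct first-variation computation (Cartan's formula plus McLean's identity $(\iota_{V_f}\Imag\tilde\Omega)_{|L}=-(\star\iota^*\Real\tilde\Omega)\star df$) is a legitimate, more self-contained alternative to the paper, which simply cites Joyce's Proposition 5.6; the identification of $Q'$ as the Taylor remainder is the same in both. For part (2) your overall strategy — fibrewise Taylor estimates via Remark \ref{rem:taylor}, then weighted integration — is also the paper's strategy. However, there is a genuine gap in the quantitative input. You assert a \emph{uniform} bound $C_{ij}:=\sup|\partial_i\partial_jF'|<\infty$. This is false on the conical ends: the correct statement, which the paper extracts from the homogeneity $|\nabla^k(\Phi_L^*\Imag\tilde\Omega)|_g\leq C_k\rho^{-k}$ (proved via the $\R^+$-equivariance of $\Phi_{\mathcal{C}}$ and the $t$-invariance of Joyce's connection on the total space of $T^*\mathcal{C}$), is the \emph{anisotropic} set of bounds
\begin{equation*}
|\partial_1^2F'|\leq C\rho^{-2},\qquad |\partial_1\partial_2F'|\leq C\rho^{-1},\qquad |\partial_2^2F'|\leq C,
\end{equation*}
so $\partial_1^2F'$ genuinely blows up like $\rho^{-2}$ as $\rho\to0$ on a CS end. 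Each derivative in the $y$-direction costs a factor $\rho^{-1}$, each in the $z$-direction costs nothing; your phrase ``up to the $\rho$-scaling'' gestures at this but the displayed estimates do not implement it.

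The omission is not cosmetic: it breaks the passage to the $C^2_2$ and weighted Sobolev norms. The correct pointwise consequence of Remark \ref{rem:taylor} is
\begin{equation*}
|Q(f)-Q(g)|\leq C\left(\rho^{-1}|df-dg|+|\nabla^2f-\nabla^2g|\right)\left(\rho^{-1}|df|+\rho^{-1}|dg|+|\nabla^2f|+|\nabla^2g|\right),
\end{equation*}
with a $\rho^{-1}$ on \emph{every} $1$-form norm. These weights are exactly what allows the second factor to be bounded by $\|f\|_{C^2_2}+\|g\|_{C^2_2}$ (recall $\|f\|_{C^2_2}$ controls $\rho^{-1}|df|$, not $|df|$; on an AC end $|df|$ can be as large as $\rho\|f\|_{C^2_2}$, which is unbounded), and what makes the first factor combine with the multiplier $w\rho^{2-m/p}$ to reproduce precisely the $W^p_{2,\boldsymbol\beta}$-norm of $f-g$ after integration. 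Your unweighted pointwise inequality therefore does not yield the claimed Sobolev estimate on the ends, and the step ``every other factor measured in the $C^2_2$-norm'' fails. The same correction is needed in your bound for $\nabla(Q(f)-Q(g))$, where $\partial_2Q'=O(\rho^{-1}|dg|+|\nabla^2g|)$ and the $\nabla^3$-terms must appear as $\rho|\nabla^3f|$ etc. Once the $\rho$-weights are inserted throughout, the rest of your argument closes and coincides with the paper's.
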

\begin{proof}
 We can compute the linearization of $F$ as in \cite{joyce:III} Proposition 5.6. This gives (1).

Our proof of (2) again follows \cite{joyce:III}. In his Proposition 5.8 Joyce provides $C^1$ estimates for analogous quantities $|Q(\alpha)-Q(\beta)|$. In his set-up the manifolds are compact and there are no weights. The result depends upon certain $C^2$ estimates for his map $F'$, cf. \cite{joyce:III} Equation 24, obtained via a particular connection. In our setting we want to set up an analogous proof, this time keeping track of behaviour with respect to $\rho$. The first step is to introduce Joyce's connection.

In general, let $E\rightarrow L$ be a vector bundle over $L$. Let $\mathcal{E}$ denote the total space of $E$, \textit{i.e.} the underlying differentiable manifold. Assume we want to build a connection on $\mathcal{E}$, allowing us to differentiate vector fields. Choosing a connection $\nabla^E$ on $E$ gives a splitting of the tangent bundle $T\mathcal{E}=H\oplus V$ into ``horizontal'' and ``vertical'' subbundles. It is then sufficient to indicate how to differentiate horizontal or vertical vector fields in horizontal or vertical directions, at the generic point $(x,e)\in\mathcal{E}$. Recall the canonical isomorphisms $H_{(x,e)}\simeq T_xL$, $V_{(x,e)}\simeq E_x$. We then see that some combinations do not require a connection. For example, the above identifications allow us to reduce the problem of differentiating vertical fields in vertical directions to the problem of differentiating a map $T_xE\rightarrow T_xE$: this is a map between vector spaces so the ordinary notion of differentiation suffices. Likewise for horizontal vector fields and vertical directions. Again using the identifications, the problem of differentiating a vertical field in a horizontal direction can instead be solved using $\nabla^E$, while horizontal vector fields can be differentiated in horizontal directions by choosing a connection on $L$, \textit{e.g.} the Levi-Civita connection induced by some metric $g$ on $L$. The result of this process is a connection on $\mathcal{E}$ which is not torsion-free: one can check that its torsion depends on the Riemannian curvature of $(L,g)$, cf. \cite{joyce:III} p.15.

Consider the case $L:=\mathcal{C}\subset\C^m$ endowed with the induced metric $\tg$ as in Section \ref{ss:conifold_defs}, and $E:=T^*\mathcal{C}$. Let $\tnabla$ denote both the Levi-Civita connection on $\mathcal{C}$  and the induced connection on $T^*\mathcal{C}$. In particular, $\tnabla$ is $t$-invariant with respect to rescaling, \textit{i.e.} $t^*\tnabla=\tnabla$. This implies, for example, that for vector fields at $(\theta,r)\in \mathcal{C}$, $t_*(\tnabla_vX_{|(\theta,r)})=\tnabla_{t_*v}t_*X_{|(\theta,tr)}$. Since the connection on $T^*\mathcal{C}$ is also $t$-invariant, the connection $\nabla$ on the total space of $T^*\mathcal{C}$, built as above, is $t$-invariant. 

We can use this connection to differentiate differential forms defined on the total space of $T^*\mathcal{C}$. In particular we can differentiate the form $\beta:=\Phi_{\mathcal{C}}^*(\Imag \tilde{\Omega})$. Then
\begin{align*}
 |\nabla^k\beta|_{\tg}(t(\theta,r,\alpha))&=(t^*(|\nabla^k\beta|_{\tg}))(\theta,r,\alpha)=|t^*\nabla^k\beta|_{t^*\tg}(\theta,r,\alpha)\\
&=|\nabla^kt^*\beta|_{t^2\tg}(\theta,r,\alpha)=t^{-m-k}t^m|\nabla^k\beta|_{\tg}(\theta,r,\alpha),
\end{align*}
where we use the fact that $\Phi_{\mathcal{C}}$ is $t$-equivariant and $t^*\tilde{\Omega}=t^m\tilde{\Omega}$, thus $t^*\beta=t^m\beta$. In particular, this proves that 
\begin{equation*}
 |\nabla^k\beta|_{\tg}(\theta,r,r^2\alpha_1+r\alpha_2dr)=r^{-k}|\nabla^k\beta|_{\tg}(\theta,1,\alpha_1+\alpha_2dr),
\end{equation*}
showing that $|\nabla^k\beta|_{\tg}=O(r^{-k})$, both for $r\rightarrow 0$ and for $r\rightarrow\infty$. 
It follows that, for our Lagrangian conifold $L$ and for all $k\in\N$, there exists $C_k>0$ such that, on $\mathcal{U}$,
\begin{equation}\label{eq:pullback_estimates}
|\nabla^k(\Phi_L^*\Imag\tilde{\Omega})|_g\leq C_k(\rho^{-k}).
\end{equation} 
As in \cite{joyce:III} Proposition 5.8 (Equation 24), these estimates show that, on $\mathcal{U}$, 
\begin{equation*}
 |\partial_1^2F'|\leq C\rho^{-2},\ \ |\partial_1\partial_2F'|\leq C\rho^{-1},\ \ |\partial_2^2F'|\leq C,
\end{equation*}
where all norms are calculated with respect to $g$. The constant $C$ depends only on $C_0$, $C_1$ and $C_2$ above. 
We can now prove (2) exactly as in \cite{joyce:III}. Specifically, fix $x$ and choose $y,y'\in T^*_xL$ and $z,z'\in T^*_xL\otimes T^*_xL$. Then, as in Remark \ref{rem:taylor} and using the above estimates, we find
that there exists $C>0$ such that
\begin{equation*}
\begin{split}
|Q'(x,y,z)-Q'(x,y',z')|\leq 
&C(\rho^{-1}|y-y'|+|z-z'|)\\
&\cdot(\rho^{-1}|y|+\rho^{-1}|y'|+|z|+|z'|).
\end{split}
\end{equation*}
Substituting $y=df$, $y'=dg$, $z=\nabla^2f$, $z'=\nabla^2g$, we obtain that, for each $x\in L$,
\begin{equation}\label{eq:Q}
\begin{split}
|Q(f)-Q(g)|
&\leq C(\|f\|_{C^2_2}+\|g\|_{C^2_2})(\rho^{-1}|df-dg|+|\nabla^2f-\nabla^2g|).\\
\end{split}
\end{equation}
Likewise, again as in \cite{joyce:III}, there exists $C>0$ such that, for each $x\in L$,
\begin{multline*}
|\rho d\left(Q(f)-Q(g)\right)|\\
\begin{split}
\leq C\Bigl\{&\rho^{-1}|df-dg|\cdot\rho^{-1}(|df|+|dg|)+\rho^{-1}|df-dg|\cdot(|\nabla^2f|+|\nabla^2g|)\\
&+\rho^{-1}|df-dg|\cdot\rho(|\nabla^3f|+|\nabla^3g|)+|\nabla^2f-\nabla^2g|\cdot\rho^{-1}(|df|+|dg|)\\
&+|\nabla^2f-\nabla^2g|\cdot(|\nabla^2f|+|\nabla^2g|)+|\nabla^2f-\nabla^2g|\cdot\rho(|\nabla^3 f|+|\nabla^3 g|)\\
&+\rho(|\nabla^3f-\nabla^3g|)\cdot\rho^{-1}(|df|+|dg|)+\rho(|\nabla^3 f-\nabla^3 g|)
\cdot(|\nabla^2 f|+|\nabla^2 g|)\Bigr\},\\
\end{split}
\end{multline*}
where all norms are calculated with respect to $g$. On the right hand side, consider the third and sixth terms: we can bound their first factors with $C^2_2$ norms. Now consider the remaining terms: we can bound their second factors with $C^2_2$ norms. We thus obtain
\begin{equation}\label{eq:dQ}
\begin{split}
|\rho d\left(Q(f)-Q(g)\right)|
\leq C\Bigl\{&\left(\rho^{-1}|df-dg|+|\nabla^2f-\nabla^2g|+\rho|\nabla^3f-\nabla^3g|\right)\\
&\cdot(\|f\|_{C^2_2}+\|g\|_{C^2_2})+\|f-g\|_{C^2_2}\cdot\rho(|\nabla^3f|+|\nabla^3g|)\Bigr\}.\\
\end{split}
\end{equation}
As in Definition \ref{def:csac_sectionspaces}, set $w:=\rho^{-\boldsymbol{\beta}}$. Multiplying both sides of Equations \ref{eq:Q}, \ref{eq:dQ} by $w\rho^{2-m/p}$, raising to the power $p$ and integrating, we obtain
\begin{equation*}
 \begin{split}
\|Q(f)-Q(g)\|_{L^p_{\boldsymbol{\beta}-2}}
&\leq C(\|f\|_{C^2_2}+\|g\|_{C^2_2})\cdot\|f-g\|_{W^p_{2,\boldsymbol{\beta}}},\\
\|d\left(Q(f)-Q(g)\right)\|_{L^p_{\boldsymbol{\beta}-3}}
&\leq C\Bigl\{(\|f\|_{C^2_2}+\|g\|_{C^2_2})\cdot\|f-g\|_{W^p_{3,\boldsymbol{\beta}}}\\
&\ \ \ \ \ \ \ +(\|f\|_{W^p_{3,\boldsymbol{\beta}}}
+\|g\|_{W^p_{3,\boldsymbol{\beta}}})\cdot\|f-g\|_{C^2_2}\Bigr\}.\\
\end{split}
\end{equation*}
This proves the first inequality in (2). Choosing $f$, $g$ in $B_r$ and using the Sobolev Embedding Theorem and our bounds on $\boldsymbol{\beta}$, relative to $2$, we prove the second inequality.
\end{proof}

\subsubsection*{Lagrangian conifolds with moving singularities} Let $(L,\iota)$ be a CS/AC Lagrangian conifold in $\C^m$ with $s$ CS ends. Let $p_i:=\iota(x_i)$ denote the corresponding singular points in $\C^m$. Assume either case of Lemma \ref{lemma:extra} holds. In Section \ref{ss:conifold_defs} we defined a space $\E$ which allows the singularities to translate and rotate in $\C^m$. Since both cases of Lemma \ref{lemma:extra} imply that the cone $\mathcal{C}_i$ corresponding to each singularity $x_i$ is SL, it is useful to modify the previous definitions, as follows. Set
\begin{equation}
\tilde{P}:=\{(p,\upsilon):p\in \C^m,\ \upsilon\in \sunitary m\},
\end{equation}
so that $\tilde{P}$ is a $\sunitary m$-principal fibre bundle over $\C^m$ of
dimension $m^2+2m-1$. For each CS end, the SL cone $\cone_i$ will have symmetry
group $G_i\subset \sunitary m$. As in Section \ref{ss:conifold_defs}, let
$\tilde{\E}_i$ denote a smooth submanifold of $\tilde{P}$ transverse to the
orbits of $G_i$ and containing $(p_i,Id)$. It has dimension $m^2+2m-1-\mbox{dim}(G_i)$. Set
$\tilde{\E}:=\tilde{\E}_1\times\dots\times\tilde{\E}_s$. We will restrict our attention to $\tilde{e}\in\tilde{\mathcal{E}}$ obtained as small perturbations of $e:=((p_1,Id),\cdots (p_s,Id))$. We then define Lagrangian conifolds $(L,\iota^{\tilde{e}})$ and embeddings $\Phi_L^{\tilde{e}}$ with
the same properties as in Section \ref{ss:conifold_defs}. As in Remark \ref{rem:phitilde}, we can think of $\tilde{e}$ as a compactly-supported symplectomorphism of $\C^m$ extending the corresponding element of $\sunitary m\times \C^m$ near each $p_i$. In this case we set $\Phi_L^{\tilde{e}}:=\tilde{e}\circ\Phi_L$. In order to use the formalism of Lemma \ref{l:taylor} we fix a local coordinate system on $\tilde{\E}$ and a norm on $T_e\tilde{\E}$. This will allow us to locally identify $\tilde{\E}$ with an open subset of a normed vector space. In particular, we identify $e$ with the origin. Locally, this process induces a distance $\mbox{d}(\cdot,\cdot)$ on $\tilde{\E}$. 

The variable $\tilde{e}$ thus defines a new class of Lagrangian deformations of $(L,\iota)$. For future reference we also want to allow a second finite-dimensional space of Lagrangian deformations. For any $i=1,\dots,s$ choose a smooth function $v_i$ on $L$ such that $v_i\equiv 1$ on the CS end $S_i$ and $v_i\equiv 0$ on the other ends, so that the $1$-forms $dv_i$ have support contained in the compact subset $K\subset L$. Let $E_0$ denote the $s$-dimensional vector space generated by these functions. Notice that, for any $v\in E_0$, the 1-form $dv$ has support in $K$ and is exact, thus closed. However it is not exact with respect to functions contained in the space $W^p_{3,\boldsymbol{\beta}}(L)$. Notice also that any non-zero $v\in E_0$ must interpolate between the value $1$ on at least one CS end and the value $0$ on the AC ends of $L$. In particular it cannot be constant, so $dv\neq 0$.
Given $v\in E_0$, set $\|v\|:=\|dv\|_{W^p_{2,\boldsymbol{\beta}-1}}$. This defines a norm on $E_0$. 

Using the ball $\tilde{B}_r\subset \tilde{\E}\times E_0\times W^p_{3,\boldsymbol{\beta}}(L)$, consider the map
\begin{equation}\label{eq:csFsobolev}
\tilde{F}:\tilde{B}_r\rightarrow W^p_{1,\boldsymbol{\beta}-2}(L), \ \
(\tilde{e},v,f)\mapsto \star((\Phi^{\tilde{e}}_L\circ (dv+df))^*\Imag\,\tilde{\Omega}),
\end{equation}
where $\star$ denotes the Hodge star operator of $L$. Since, near each singularity, $\tilde{e}$ preserves all structures on $\C^m$ and $dv=0$, $\tilde{F}(\tilde{e},v,f)=\tilde{F}(e,0,f)=F(f)$ near the singularities; this implies that $\tilde{F}$ is well-defined simply because $F$ is well-defined. 

According to Definition \ref{def:cy}, the deformed immersion  $(L,\Phi^{\tilde{e}}_L\circ (dv+df))$ is SL if and only if $\tilde{F}(\tilde{e},v,f)=0$. Setting
\begin{equation*}\label{eq:tildeF'}
\tilde{F}':\tilde{\E}\times\left(\mathcal{U}\oplus(T^*L\otimes T^*L)\right)\rightarrow \R,
\ \ (\tilde{e},x,y,z)\mapsto(\Phi^{\tilde{e}}_L)^*\Imag\,\tilde{\Omega}_{|(x,y)}((i_{e_1}z,e_1),\dots,(i_{e_m}z,
e_m)),
\end{equation*}
we obtain $\tilde{F}(\tilde{e},v,f)_{|x}=\tilde{F}'(\tilde{e},x,dv_{|x}+df_{|x},\nabla^2v_{|x}+\nabla^2f_{|x})$.
Smoothness of $\tilde{F}'$ with respect to the variable $\tilde{e}$ can be discussed as in Remark \ref{rem:phitilde}. 

As in Lemma \ref{l:taylor} and using our local coordinate system on $\tilde{\E}$, for any fixed $x\in L$ we can write
\begin{equation*}\label{eq:tildeFtaylor}
\begin{split}
\tilde{F}'(\tilde{e},x,y,z)=&\tilde{F}'(e,x,0,0)+\partial_0 \tilde{F}'(e,x,0,0)\,\tilde{e}+\partial_1 \tilde{F}'(e,x,0,0)\,y+\partial_2 \tilde{F}'(e,x,0,0)\,z\\
&+\tilde{Q}'(\tilde{e},x,y,z),
\end{split}
\end{equation*}
where $\partial_0$ denotes differentiation with respect to the variable $\tilde{e}$ and  $\tilde{Q}'=\tilde{Q}'(\tilde{e},x,y,z)$ is a smooth function.
\begin{prop}\label{prop:tildeFproperties}
The map $\tilde{F}$ has the following properties:
 \begin{enumerate}
  \item There exists an injective linear map $\chi:T_e\tilde{\E}\rightarrow
C^\infty_{\boldsymbol{0}}(L)$ such that (i) $\chi(\tilde{e})\equiv 0$ away from the
singularities and (ii) the linearized map $\tilde{P}:=d\tilde{F}[e,0,0]$ is of the form
\begin{equation*}
\tilde{P}: T_e\tilde{\E}\oplus E_0\oplus W^p_{3,\boldsymbol{\beta}}(L)\rightarrow W^p_{1,\boldsymbol{\beta}-2}(L), \ \
(\tilde{e},v,f)\mapsto d^\star\left((\star\iota^*(\Real\,\tilde{\Omega}))(d\,\chi(\tilde{e})+dv+df)\right).
\end{equation*}
We can thus write
\begin{equation*}
\tilde{F}(\tilde{e},v,f)=\tilde{F}(e,0,0)+\tilde{P}(\tilde{e},v,f)+\tilde{Q}(\tilde{e},v,f),
\end{equation*}
where $\tilde{Q}(\tilde{e},v,f)_{|x}:=\tilde{Q}'(\tilde{e},x,dv_{|x}+df_{|x},\nabla^2v_{|x}+\nabla^2f_{|x})$.
\item There exists $C>0$ such that
\begin{equation*}
\begin{split}
\|\tilde{Q}(\tilde{e},v,f)-\tilde{Q}(\tilde{d},u,g)\|_{W^p_{1,\boldsymbol{\beta}-2}}
&\leq Cr\cdot\left(\emph{d}(\tilde{e},\tilde{d})+\|v-u\|+\|f-g\|_{W^p_{3,\boldsymbol{\beta}}}\right).
\end{split}
\end{equation*}
 \end{enumerate}
\end{prop}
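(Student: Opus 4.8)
The plan is to deduce both statements from Proposition \ref{prop:Fproperties}, exploiting the fact that, near the singular points and along all the ends of $L$, the map $\tilde F$ coincides with $F$; the only genuinely new content is concentrated on the compact core $K$ (which contains the supports of the $dv_i$ and of the cut-off vector fields defining $\tilde e$).

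\textbf{Part (1): the linearization and the map $\chi$.} I would first construct $\chi$. The key observation is that, near each $p_i$, the element $\tilde e$ acts as an element of $\sunitary m\times\C^m$, i.e.\ as a rotation–translation preserving $\tilde\Omega$; hence the family $\iota^{\tilde e}=\tilde e\circ\iota$ is obtained from $\iota$ by a Lagrangian deformation supported in a bounded neighbourhood of the singularities (away from a neighbourhood of the $x_i$ one has $\tilde e=Id$, so $\iota^{\tilde e}=\iota$). Since this deformation is generated by a compactly supported symplectic vector field $X$ on $\C^m$ and $H^1_c(\C^m)=0$, one has $i_X\tilde\omega=dh$ for a compactly supported Hamiltonian $h$, so the corresponding closed $1$-form on $L$ is exact, equal to $d(h\circ\iota)$. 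Differentiating at $\tilde e=e$ and cutting off appropriately defines the injective linear map $\chi:T_e\tilde\E\to C^\infty_{\boldsymbol{0}}(L)$, with $\chi(\tilde e)$ supported near the singularities; injectivity follows because $\tilde\E$ is built as a slice transverse to the symmetry groups $G_i$, so a non-trivial $\tilde e$ genuinely translates a singular point or rotates a cone and cannot induce the zero Lagrangian deformation. Having identified the $\tilde e$-direction with the Lagrangian deformation $d\chi(\tilde e)$, the linearization $\tilde P=d\tilde F[e,0,0]$ is computed by additivity over the three variables: the $v$- and $f$-directions contribute $d^\star((\star\iota^*\Real\tilde\Omega)\,dv)$ and $d^\star((\star\iota^*\Real\tilde\Omega)\,df)$ exactly as in Proposition \ref{prop:Fproperties}(1) (following \cite{joyce:III} Proposition 5.6), while the $\tilde e$-direction coincides with the derivative of $F$ along the deformation $d\chi(\tilde e)$ and therefore contributes $d^\star((\star\iota^*\Real\tilde\Omega)\,d\chi(\tilde e))$. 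Adding the three terms gives the stated formula for $\tilde P$, and the Taylor expansion of $\tilde F'$ written before the Proposition produces the decomposition $\tilde F=\tilde F(e,0,0)+\tilde P+\tilde Q$ with $\tilde Q$ as described.

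\textbf{Part (2): the quadratic estimate.} I would split $L=K\cup(L\setminus K)$. On $L\setminus K$ (the CS and AC ends) the maps $\tilde e$ and $dv$ are trivial and $\tilde e^*\Imag\tilde\Omega=\Imag\tilde\Omega$, so $\tilde F(\tilde e,v,f)=F(f)$ and hence $\tilde Q(\tilde e,v,f)=Q(f)$ there; the contribution of this region to $\|\tilde Q(\tilde e,v,f)-\tilde Q(\tilde d,u,g)\|_{W^p_{1,\boldsymbol\beta-2}}$ is thus bounded by $\|Q(f)-Q(g)\|_{W^p_{1,\boldsymbol\beta-2}(L)}\leq Cr\|f-g\|_{W^p_{3,\boldsymbol\beta}}$ by Proposition \ref{prop:Fproperties}(2). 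On the compact set $K$ the weighted norms are equivalent to the unweighted ones and $\rho$ is bounded above and below, so the estimate reduces to a routine quadratic estimate: smoothness of $\tilde F'$ in all of its variables (smoothness in $\tilde e$ handled as in Remark \ref{rem:phitilde}) gives a uniform bound on the mixed second derivatives of $\tilde F'$ over the relevant bounded region, and a straightforward extension of Remark \ref{rem:taylor} to three groups of variables, applied with $y=dv+df$, $z=\nabla^2v+\nabla^2f$ (together with its first-derivative analogue, involving $\nabla^3$, for the $W^p_1$ part, exactly as in the proof of Proposition \ref{prop:Fproperties}(2)), yields on $K$ the bound $Cr\bigl(\textrm{d}(\tilde e,\tilde d)+\|v-u\|+\|f-g\|_{W^p_{3,\boldsymbol\beta}}\bigr)$; here one uses $p>m$ (Sobolev embedding, to control $C^2_2$ norms by $W^p_3$ norms) and the fact that $E_0$ is finite-dimensional so all its norms are equivalent. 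Summing the two contributions and taking $p$-th roots gives the statement.

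\textbf{Main obstacle.} The delicate point is the construction and analysis of $\chi$ in Part (1): one must verify carefully that the $\tilde e$-family of immersions can be arranged so that its infinitesimal variation is an exact $1$-form $d\chi(\tilde e)$ supported near the singularities, that $\chi$ lands in $C^\infty_{\boldsymbol{0}}(L)$, and that the transversality built into the slice $\tilde\E$ forces injectivity. Once this is in place Part (2) is essentially bookkeeping, since nothing new happens off $K$ and $K$ is compact.
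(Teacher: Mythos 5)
Your proposal is correct in substance, and for Part (1) it follows essentially the route the paper takes: the paper also defines $\chi(\tilde{e})$ as the Hamiltonian of the compactly supported symplectic vector field representing $\tilde{e}$ (deferring the details, including membership in $C^\infty_{\boldsymbol{0}}(L)$ and injectivity via the slice construction, to \cite{pacini:sldefsextended}, Proposition 6.5). For Part (2) your organization is genuinely different. The paper never splits the domain $L$; it splits the \emph{variables}, estimating $\tilde{Q}(\tilde{e},v,f)-\tilde{Q}(\tilde{e},v,g)$, $\tilde{Q}(\tilde{e},v,f)-\tilde{Q}(\tilde{e},u,f)$ and $\tilde{Q}(\tilde{e},v,f)-\tilde{Q}(\tilde{d},v,f)$ separately and summing by the triangle inequality. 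The first of these is obtained by the trick of regarding $(\tilde{e},v,0)$ as defining a \emph{new} Lagrangian conifold and applying Proposition \ref{prop:Fproperties} verbatim to it, with a continuity--compactness argument to make the constant uniform in $(\tilde{e},v)$; the second repeats the weighted pointwise estimates of Proposition \ref{prop:Fproperties} for $dv-du$, using the embedding $W^p_{2,\boldsymbol{\beta}-1}\hookrightarrow C^1_1$; the third applies Lemma \ref{l:taylor}(3) in the $\tilde{e}$-variable with a constant $C(x)$ that is compactly supported because $\tilde{Q}'$ is $\tilde{e}$-independent away from the singularities. Your domain-splitting ($K$ versus the ends) plus a three-variable version of Remark \ref{rem:taylor} reaches the same estimate and is arguably more economical; what the paper's variable-splitting buys is that each piece is either literally Proposition \ref{prop:Fproperties} or a one-variable Taylor estimate, so no new multi-variable bookkeeping is needed, and the structure transfers directly to the $\bt$-dependent version in Proposition \ref{prop:tFproperties}.

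One bookkeeping point to fix: the supports of the cut-off vector fields defining $\tilde{e}$ are \emph{not} contained in $K$ — by construction $\tilde{e}$ acts as a genuine rigid motion in a fixed neighbourhood of each $p_i$, whose preimage is the portion of the CS end near $x_i$, i.e.\ inside $L\setminus K$. Your identity $\tilde{Q}(\tilde{e},v,f)=Q(f)$ on $L\setminus K$ therefore needs two separate observations: deep in each CS end $\tilde{F}$ is $\tilde{e}$-independent (rigid motions preserve $\Imag\tilde{\Omega}$) \emph{and} the term $d^\star\bigl((\star\iota^*\Real\tilde{\Omega})\,d\chi(\tilde{e})\bigr)$ vanishes there (which is forced by consistency with Part (1), since $\tilde{P}(\tilde{e},0,0)=\partial_0\tilde{F}_{|e,0,0}(\tilde{e})=0$ where $\tilde{F}$ is $\tilde{e}$-independent); the genuinely $\tilde{e}$-dependent region is the cut-off annulus, which sits at a fixed range of $\rho$ and should be absorbed into the compact core before running your unweighted quadratic estimate. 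With that adjustment the argument goes through.
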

\begin{proof}
Concerning (1) we refer to \cite{pacini:sldefsextended}, Proposition 6.5. The idea is that, near each singularity, $\chi(\tilde{e})$ is a Hamiltonian function for $\tilde{e}\in T_{e}\tilde{\E}$ (thought of as a vector field on $\C^m$). One can write this function down explicitly to show that $\chi(\tilde{e})\in C^\infty_{\boldsymbol{0}}(L)$. On the other hand, as a symplectomorphism of $\C^m$, we defined $\tilde{e}$ so that it acts trivially away from the singularities. This implies that $\chi(\tilde{e})=0$ there. 
%As already mentioned, $\tilde{F}$ is independent of $\tilde{e}$ also near the singularities; this implies that $\tilde{P}(\tilde{e})$ has compact support away from the singularities. 
%Considering Equation \ref{eq:cs_metric} it follows that, on each CS end with rate $\mu_i$, $\Delta_g\,\chi(\tilde{e})\in C^\infty_{\mu_i-2}(L)$. Using the fact that $\beta_i<\mu_i$ we conclude that $\Delta_g\,\chi(\tilde{e})\in W^p_{1,\boldsymbol{\beta}-2}(L)$. It then follows that $\tilde{P}(\chi(\tilde{e}))\in W^p_{1,\boldsymbol{\beta}-2}(L)$.
  
Fix $(\tilde{e},v,0)\in\tilde{B}_r$. This data defines a new Lagrangian conifold to which we can apply Proposition \ref{prop:Fproperties}, finding
\begin{equation}\label{eq:tildeQ_contraction}
\begin{split}
\|\tilde{Q}(\tilde{e},v,f)-\tilde{Q}(\tilde{e},v,g)\|_{W^p_{1,\boldsymbol{\beta}-2}}
&\leq Cr\cdot\|f-g\|_{W^p_{3,\boldsymbol{\beta}}}.
\end{split}
\end{equation}
By continuity and compactness, we can assume that $C$ is independent of $(\tilde{e},v)$. 

The same methods used to prove Proposition \ref{prop:Fproperties} allow us to prove, for fixed $(\tilde{e},0,f)$, that
\begin{equation*}
\begin{split}
|\tilde{Q}(\tilde{e},v,f)-\tilde{Q}(\tilde{e},u,f)|
&\leq C(\rho^{-1}|dv|+\rho^{-1}|du|+|\nabla^2v|+|\nabla^2u|)(\rho^{-1}|dv-du|+|\nabla^2v-\nabla^2u|)\\
&\leq C(\|dv\|_{C^1_1}+\|du\|_{C^1_1})\cdot (\rho^{-1}|dv-du|+|\nabla^2v-\nabla^2u|)\\
&\leq C(\|dv\|_{W^p_{2,\boldsymbol{\beta}-1}}+\|du\|_{W^p_{2,\boldsymbol{\beta}-1}})\cdot (\rho^{-1}|dv-du|+|\nabla^2v-\nabla^2u|),
\end{split}
\end{equation*}
where the last inequality relies on the Sobolev Embedding Theorem and the bounds on $\boldsymbol{\beta}$, relative to $2$.
Recall that in Proposition \ref{prop:Fproperties} the constant $C$ is defined after restricting to  $\mathcal{U}\subset T^*L$. Likewise, $C$ here is defined after restricting to a neighbourhood of the graph of $df$ in $T^*L$. The bounds on $f$ allow us to assume that this neighbourhood is contained in $\mathcal{U}$, so $C$ is independent of $f$. Again, continuity and compactness imply that it is also independent of $\tilde{e}$. Analogous results hold for $\rho dQ$. We can multiply both sides of these equations by $w\rho^{2-m/p}$, raise to the power $p$ and integrate, obtaining 
\begin{equation}\label{eq:tildeQ_contractionbis}
\|\tilde{Q}(\tilde{e},v,f)-\tilde{Q}(\tilde{e},u,f)\|_{W^p_{1,\boldsymbol{\beta}-2}}
\leq Cr\cdot\|v-u\|.\\
\end{equation}
Finally, fix $(e,v,f)\in \tilde{B}_r$ and $(x,y,z)$. Applying Lemma \ref{l:taylor} (3) with respect to the $\tilde{e}$ variable, there exists $C(x,y,z)>0$ such that
\begin{equation*}
 |\tilde{Q}'(\tilde{e},x,y,z)-\tilde{Q}'(\tilde{d},x,y,z)|\leq C(x,y,z)\cdot\mbox{d}(\tilde{e},\tilde{d})\cdot\left(\mbox{d}(\tilde{e},e)+\mbox{d}(\tilde{d},e)\right).
\end{equation*}
Recall that the action of $\tilde{\E}$ is trivial away from a neighbourhood of the singularities of $L$, \textit{i.e.} $\tilde{Q}'$ is independent of $\tilde{e}$ there. Furthermore, if we restrict $y$ and $z$ within a bounded set of their domains we can assume that $C$ is independent of $y$ and $z$. We can thus write $C=C(x)$, obtaining a function which is compactly supported in a neighbourhood of the singularities.

In order to substitute $y:=df(x)$ and $z:=\nabla^2f(x)$ we need to ensure that these values are appropriately bounded, as required above. This follows from the fact that the $C^2_2$ norm of $f$ is small, as already seen in the proof of Lemma \ref{lemma:extra}. We conclude that
\begin{equation*}
 |\tilde{Q}(\tilde{e},v,f)-\tilde{Q}(\tilde{d},v,f)|\leq C(x)\cdot\mbox{d}(\tilde{e},\tilde{d})\cdot\left(\mbox{d}(\tilde{e},e)+\mbox{d}(\tilde{d},e)\right).
\end{equation*}
First derivatives (with respect to the variable $x$) can be studied analogously, starting from the Taylor expansion (with respect to $\tilde{e}$) of $d\tilde{F}(\tilde{e},v,f)$. As usual, introducing weight factors and integrating we obtain 
\begin{equation}\label{eq:tildeQ_contractionter}
\|\tilde{Q}(\tilde{e},v,f)-\tilde{Q}(\tilde{d},v,f)\|_{W^p_{1,\boldsymbol{\beta}-2}}
\leq \|C\|_{W^p_{1,\boldsymbol{\beta}-2}}\left(\mbox{d}(\tilde{e},e)+\mbox{d}(\tilde{d},e)\right)\cdot \mbox{d}(\tilde{e},\tilde{d})\leq C r\cdot\mbox{d}(\tilde{e},\tilde{d}).\\
\end{equation}
Combining Equations \ref{eq:tildeQ_contraction}, \ref{eq:tildeQ_contractionbis}, \ref{eq:tildeQ_contractionter} via the triangle inequality gives (2).
\end{proof}
\begin{remark}
 As already mentioned following Equation \ref{eq:csFsobolev}, $\tilde{F}$ is independent of $\tilde{e}$ also near the singularities. We could use this fact to simplify parts of the proof of Proposition \ref{prop:tildeFproperties}. It also implies that $\tilde{P}(\tilde{e})$ has compact support away from the singularities. This argument however depends strongly on our choice of working in $\C^m$. In a general CY manifold $M$, the discrepancy between the CY structures on $M$ and on $\C^m$ introduces error terms which must be compensated for by appropriate restrictions on the rates of $(L,\iota)$, see \cite{pacini:sldefsextended} for details.
\end{remark}

\subsubsection*{Lagrangian connect sums} We now come to our main problem: how to find SL deformations of Lagrangian conifolds $(L_{\bt},\iota_{\bt})$ obtained as connect sums. Specifically, let $(L,\iota)$, $(\hat{L},\hat{\iota})$ be marked compatible Lagrangian conifolds with weights $\boldsymbol{\beta}$, $\hat{\boldsymbol{\beta}}$. 
%Assume either case of Lemma \ref{lemma:extra} holds.
%that, for each end in $S^{**}$ (respectively, in $\hat{S}^{**}$), $\mu_i\geq\beta_i>2$ and $\lambda_i\leq\beta_i<2$ (respectively, $\hat{\mu_i}\geq\hat{\beta_i}>2$ and $\hat{\lambda}_i\leq\hat{\beta_i}<2$).
% and that 
%the weight $\boldsymbol{\beta}_{\bt}$ satisfies the assumptions of Lemma \ref{l:changing_weight_estimates} with respect to $\boldsymbol{\beta}'_{\bt}\equiv 2$. 
We will study the existence of SL deformations of $(L_{\bt},\iota_{\bt})$ by considering maps $F_{\bt}$ and extensions $\tilde{F}_{\bt}$, defined as follows.

Let $\beta_i$ denote the value of $\boldsymbol{\beta}_{\bt}$ on the $i$-th neck of $L_{\bt}$ and choose $\alpha>\max{\{2-\beta_i\}}$, where $\max$ is taken over all necks. For each $\bt$, set $t:=\max\{t_i:t_i\in\bt\}$. Fix $p>m$. Consider the map 
\begin{equation}\label{eq:tFsobolev}
F_{\bt}:B_{t^\alpha}\subset W^p_{3,\boldsymbol{\beta_{\bt}}}(L_{\bt})\rightarrow W^p_{1,\boldsymbol{\beta}_{\bt}-2}(L_{\bt}), \ \
f\mapsto \star_{\bt}((\Phi_{L_{\bt}}\circ df)^*\Imag\,\tilde{\Omega}),
\end{equation}
where $\star_{\bt}$ denotes the Hodge star operator of $L_{\bt}$. We can think of $F_{\bt}$ as being obtained from an underlying smooth function
\begin{equation*}\label{eq:tF'}
F_{\bt}'=F_{\bt}'(x,y,z):\mathcal{U}_{\bt}\oplus(T^*L_{\bt}\otimes T^*L_{\bt})\rightarrow \R
\end{equation*}
defined as in Equation \ref{eq:F'}, via the relationship
\begin{equation*}
 F_{\bt}(f)_{|x}=F_{\bt}'(x,df_{|x},\nabla^2 f_{|x}).
\end{equation*}
As long as $F_{\bt}$ is well-defined, we can define $Q'_{\bt}$, $Q_{\bt}$ and we can linearize $F_{\bt}$ as above.

We can extend the domain of this map to include moving singularities, as follows. Let $s$ denote the number of CS ends in $S^{**}$ and $\hat{s}$ denote the number of CS ends in $\hat{S}^{**}$. 
Given any singularity $x_i\in S^{**}$, set $p_i:=\iota(x_i)=\iota_{\bt}(x_i)$ and define $\tilde{\E}_i$ as above, containing $e_i:=(p_i, Id)$. Using local coordinates on $\tilde{\E}_i$ and a norm $\|\cdot\|$ on $T_{e_i}\tilde{\E}_i$ we locally identify $\tilde{\E}_i$ with a normed vector space, thus obtaining a  distance $\mbox{d}_i$ on $\tilde{\E}_i$. Given any singularity $\hat{x}_i\in \hat{S}^{**}$, set $\hat{p}_i:=\hat{\iota}(\hat{x}_i)$ and again define $\tilde{\E}_i$ as above, containing $\hat{e}_i:=(\hat{p}_i, Id)$. We again use local coordinates and a norm $\|\cdot\|$ on $T_{\hat{e}_i}\tilde{\E}_i$ to endow it with a distance $\hat{\mbox{d}}_i$. 

Notice that the point $p_{t_i}:=\iota_{\bt}(\hat{x}_i)$ moves in $\C^m$ via dilation. We will thus find ourselves working with the ``rescaled'' objects $t_i\cdot\tilde{e}\cdot t_i^{-1}$,
where as in Remark \ref{rem:phitilde} we think of $\tilde{e}$ as a compactly-supported symplectomorphism of $\C^m$ so that it makes sense to compose it with dilations of $\C^m$. We will also need to rescale the norms on $T_{\hat{e}_i}\tilde{\E}_i$ and the corresponding distances, setting 
\begin{equation}\label{eq:tnorms}
\|\cdot\|_{t_i}:=t_i^{2-\beta_i}\|\cdot\|,\ \ \hat{\mbox{d}}_{t_i}:=t_i^{2-\beta_i}\hat{\mbox{d}}_i,
\end{equation}
 where $\beta_i$ is the weight on the necks corresponding to the $i$-th connected component of $\hat{L}$. 
Set
$\tilde{\E}_{\bt}:=\tilde{\E}_1\times\dots\times\tilde{\E}_{\hat{s}}\times\tilde{\E}_1\times\dots\times\tilde{\E}_s$, endowed with the distance $\mbox{d}_{\bt}:=\hat{\mbox{d}}_{t_1}+\cdots+\hat{\mbox{d}}_{t_{\hat{s}}}+\mbox{d}_1+\cdots+\mbox{d}_s$. Choose an element $\tilde{e}:=(\tilde{e}_1,\dots,\tilde{e}_{\hat{s}},\tilde{e}_1,\dots\tilde{e}_s)\in\tilde{\E}_{\bt}$. We can identify $\tilde{e}$ with the symplectomorphism obtained by composition of the individual $\tilde{e}_i$. Notice that the supports of the $\tilde{e}_i$ are all disjoint. We denote by $\bt\cdot\tilde{e}\cdot\bt^{-1}$ the rescaled object $(t_1\cdot\tilde{e}_1\cdot t_1^{-1},\dots,t_{\hat{s}}\cdot\tilde{e}_{\hat{s}}\cdot t_{\hat{s}}^{-1},\tilde{e}_1,\dots\tilde{e}_s)$.

For any $i=1,\dots,s$ choose a smooth function $v_i$ on $L$ such that $v_i\equiv 1$ on the CS end $S_i\in S^{**}$ and $v_i\equiv 0$ on the other ends, so that $dv_i$ has support in the compact subset $K\subset L$.
For any $i=1,\dots,\hat{s}$ choose a smooth function $v_i$ on $\hat{L}$ such that $v_i\equiv 1$ on the CS end $\hat{S}_i\in \hat{S}^{**}$ and $v_i\equiv 0$ on the other ends, so that $dv_i$ has support in the compact subset $\hat{K}\subset \hat{L}$. By extending these functions to zero, we can think of them as $\bt$-independent functions on $L_{\bt}$. 
Let $E_0$ denote the $(s+\hat{s})$-dimensional vector space generated by these functions. 
Given $v\in E_0$, set $\|v\|_{\bt}:=\|dv\|_{W^p_{2,\boldsymbol{\beta}_{\bt}-1}}$. This defines a norm on $E_0$. 

Using the ball $\tilde{B}_{t^\alpha}\subset \tilde{\E}_{\bt}\times E_0\times W^p_{3,\boldsymbol{\beta}_{\bt}}(L_{\bt})$, consider the map
\begin{equation}\label{eq:cstFsobolev}
\tilde{F}_{\bt}:\tilde{B}_{t^\alpha}\rightarrow W^p_{1,\boldsymbol{\beta}_{\bt}-2}(L_{\bt}), \ \
(\tilde{e},v,f)\mapsto \star_{\bt}((\Phi^{\bt\cdot\tilde{e}\cdot \bt^{-1}}_{L_{\bt}}\circ (dv+df))^*\Imag\,\tilde{\Omega}),
\end{equation}
where $\star_{\bt}$ denotes the Hodge star operator of $(L_{\bt},\iota_{\bt})$. We define functions $\tilde{F}'_{\bt}$ and $\tilde{Q}'_{\bt}$ as before. Notice that $\tilde{F}_{\bt}$ extends $F_{\bt}$ and that the deformed immersion  $(L_{\bt},\Phi^{\bt\cdot\tilde{e}\cdot\bt^{-1}}_{L_{\bt}}\circ (dv+df))$ is SL if and only if  $\tilde{F}_{\bt}(\tilde{e},v,f)=0$. 

\begin{prop}\label{prop:tFproperties}
Assume $\boldsymbol{\beta}_{\bt}$ and $(L_{\bt}, \iota_{\bt})$ satisfy either case of Lemma \ref{lemma:extra} and $\alpha$ is chosen as above. Then:
\begin{enumerate}
\item For all $M>1$ there exists $\epsilon>0$ such that, for all $\bt>0$ in the circular sector defined in $\bt$-space by the conditions
\begin{equation*}
 |\bt|<\epsilon,\ \ 1\leq\max\{t_i\}/\min\{t_i\}<M,
\end{equation*}
the maps $F_{\bt}$ and $\tilde{F}_{\bt}$ are well-defined.
\item There exists $C>0$ such that, for all $\bt$ as in (1), all $(\tilde{e},v,f)\in \tilde{B}_{t^\alpha}$ and all $x\in L_{\bt}$, 
\begin{equation*}
|\tilde{F}_{\bt}(\tilde{e},v,f)|\leq C.
\end{equation*}
There exists a function $C=C(x)>0$, compactly supported in a neighbourhood of the singularities, such that, for all $\bt$ as in (1), all $(\tilde{e},v,f)\in \tilde{B}_{t^\alpha}$ and all $x$ in the $i$-th component of $\hat{L}$, 
\begin{equation*}
\|\partial_0\tilde{F}_{\bt|\tilde{e},v,f}\|_{t_i}\leq C(x)t_i^{\beta_i-2}, \ \ 
\|\partial_0\partial_0\tilde{F}_{\bt|\tilde{e},v,f}\|_{t_i}\leq C(x)t_i^{2\beta_i-4},
\end{equation*}
where $\partial_0$ denotes differentiation with respect to the variable $\tilde{e}$, $\beta_i$ is the weight on the necks corresponding to the $i$-th component of $\hat{L}$, and the norms are defined as in Lemma \ref{l:taylor}.
%In particular, $\tilde{F}_{\bt}(\tilde{e},0,0)$ is $\bt$-independent.
\item For each $\bt$ as in (1), the linearized map $\tilde{P}_{\bt}:=d\tilde{F}_{\bt}[e,0,0]$ is of the form
\begin{equation*}
\tilde{P}_{\bt}: T_e\tilde{\E}_{\bt}\oplus E_0\oplus W^p_{3,\boldsymbol{\beta}_{\bt}}(L_{\bt})\rightarrow W^p_{1,\boldsymbol{\beta}_{\bt}-2}(L_{\bt}), \ \
(\tilde{e},v,f)\mapsto d^{\star_{\bt}}\left((\star_{\bt}\iota_{\bt}^*(\Real\,\tilde{\Omega}))(d\,\chi_{\bt}(\tilde{e})+dv+df)\right),
\end{equation*}
where $\chi_{\bt}=t_i^2\chi$ near the singularities of the $i$-th component of $\hat{L}$, $\chi_{\bt}=\chi$ near the singularities of $L$ and $\chi_{\bt}= 0$ elsewhere.
We can thus write
\begin{equation*}
\tilde{F}_{\bt}(\tilde{e},v,f)=\tilde{F}_{\bt}(e,0,0)+\tilde{P}_{\bt}(\tilde{e},v,f)+\tilde{Q}_{\bt}(\tilde{e},v,f),
\end{equation*}
where $\tilde{Q}_{\bt}(\tilde{e},v,f)_{|x}:=\tilde{Q}_{\bt}'(\tilde{e},x,dv_{|x}+df_{|x},\nabla^2v_{|x}+\nabla^2f_{|x})$.
\item There exists $C>0$ such that, for each $\bt$ as in (1),
\begin{equation*}\label{eq:cstQ_contraction}
\|\tilde{Q}_{\bt}(\tilde{e},v,f)-\tilde{Q}_{\bt}(\tilde{d},u,g)\|_{W^p_{1,\boldsymbol{\beta}_{\bt}-2}}
\leq C t^{\alpha+\min\{\beta_i-2\}}\left(\emph{d}_{\bt}(\tilde{e},\tilde{d})+\|v-u\|_{\bt}+\|f-g\|_{W^p_{3,\boldsymbol{\beta}_{\bt}}}\right).
\end{equation*}
\end{enumerate}
\end{prop}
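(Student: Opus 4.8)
The strategy is the ``region by region'' reduction used throughout Sections \ref{s:rescaled_lag_cons}--\ref{s:lagr_sum}: decompose $L_{\bt}$ into the fixed part $L\setminus S^*$, the necks $\Sigma_i\times[t_i\hat{R},\epsilon]$, and the rescaled parts $\hat{L}\setminus\hat{S}^*$, and on each region express $\tilde{F}_{\bt}$ in terms of the corresponding non-parametric object, already understood via Propositions \ref{prop:Fproperties} and \ref{prop:tildeFproperties}. Since $\tilde{F}_{\bt}$ is a $2$-jet operation, $\tilde{F}_{\bt}(\tilde{e},v,f)_{|x}=\tilde{F}_{\bt}'(\tilde{e},x,dv_{|x}+df_{|x},\nabla^2v_{|x}+\nabla^2f_{|x})$, every pointwise estimate localizes to whichever region $x$ lies in, and on overlaps the three descriptions of $\Phi_{L_{\bt}}$ and $\iota_{\bt}$ agree by construction; the only bookkeeping is the behaviour under the rescalings used to build the three pieces. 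The circular-sector hypothesis $1\le\max t_i/\min t_i<M$ serves only to make all $t_i$ comparable to $t$, so that the various smallness conditions hold simultaneously on all components.

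For (1): on $L\setminus S^*$ the map $F_{\bt}$ is literally the map $F$ of Equation \ref{eq:Fsobolev} for $(L,\iota)$, well-defined by Lemma \ref{lemma:extra} once $\|f\|_{C^2_2}$ is small, which holds uniformly in $\bt$ because $f\in B_{t^\alpha}$ and the Sobolev embedding constant of $L_{\bt}$ is $\bt$-independent by Theorem \ref{thm:normstequivalent}. On the $i$-th rescaled component, the dilation identities $t^*\tilde{\Omega}=t^m\tilde{\Omega}$, $t^*\tg=t^2\tg$, together with translation-invariance of $\tilde{\Omega}$, show that $\star_{\bt}$ composed with pullback by $\Phi_{t_i,\hat{L}}\circ(\cdot)$ reproduces the analogous map $\hat{\tilde{F}}$ for $(\hat{L},\hat{\iota})$ evaluated on the rescaled data $(\tilde{e}_i,t_i^{-2}v,t_i^{-2}f)$, and well-definedness follows from Lemma \ref{lemma:extra} for $\hat{L}$. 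On the necks one argues as in the proof of Lemma \ref{lemma:extra}: $\tilde{F}_{\bt}$ there equals $\star_{\bt}((\Phi_{\mathcal{C}_i}\circ(dA_{t_i}+d(v+f)))^*\Imag\,\tilde{\Omega})$, the graph of $dA_{t_i}+d(v+f)$ lies in $\mathcal{U}$ because $dA_{t_i}$ is small in the $\rho$-normalized norm by Lemma \ref{l:induced_metric} and $d(v+f)$ is small in $C^1_1$, and target membership uses that $\mathcal{C}_i$ is SL. The rôle of $\alpha$ surfaces precisely here: the radius of $\mathcal{U}_{\bt}$ on the neck is proportional to $\rho=r$, which degenerates to $t_i\hat{R}$ at the narrow end, so the $\rho^{-1}$-normalized size of the data on the neck is of order $t^{\alpha+\min\{\beta_i-2\}}$, and $\alpha>\max\{2-\beta_i\}$ is exactly the condition making this tend to $0$.

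Parts (2) and (3) are again worked region by region. The linearization in (3) is computed as in Propositions \ref{prop:Fproperties}(1) and \ref{prop:tildeFproperties}(1) on the fixed and neck parts; the only new point is that on the $i$-th rescaled component a vector field $X$ with $\tilde{\omega}$-Hamiltonian $h$ acquires, after conjugation by the dilation $t_i$, the Hamiltonian $t_i^2\,h(x/t_i)$, because $t^*\tilde{\omega}=t^2\tilde{\omega}$; in the degree-$0$ weighted space near the rescaled singularities this reads $\chi_{\bt}=t_i^2\chi$, while near the singularities of $L$ no rescaling occurs so $\chi_{\bt}=\chi$, and $\chi_{\bt}=0$ elsewhere since $\chi(\tilde{e})$ is supported near the singularities. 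For (2), the pointwise bound $|\tilde{F}_{\bt}|\le C$ follows from the $2$-jet description together with the scale-invariant estimate $|\nabla^k(\Phi_{\mathcal{C}_i}^*\Imag\,\tilde{\Omega})|_{\tg_i}=O(\rho^{-k})$ of Equation \ref{eq:pullback_estimates} and the uniform $C^2_2$-smallness of $dv+df$; the $\partial_0$ bounds reduce to the $\tilde{e}$-derivative bounds implicit in Proposition \ref{prop:tildeFproperties} (which are compactly supported near the singularities), transported through the rescaled norm $\|\cdot\|_{t_i}=t_i^{2-\beta_i}\|\cdot\|$ of Equation \ref{eq:tnorms}: one factor $t_i^{\beta_i-2}$ per $\partial_0$ produces exactly $t_i^{\beta_i-2}$ and $t_i^{2\beta_i-4}$.

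The heart of the matter, and the main obstacle, is the contraction estimate (4) with the precise exponent $\alpha+\min\{\beta_i-2\}$ and a single uniform constant. My plan is to split $\tilde{Q}_{\bt}(\tilde{e},v,f)-\tilde{Q}_{\bt}(\tilde{d},u,g)$ by the triangle inequality into the three one-variable differences exactly as in the proof of Proposition \ref{prop:tildeFproperties} via Equations \ref{eq:tildeQ_contraction}, \ref{eq:tildeQ_contractionbis}, \ref{eq:tildeQ_contractionter}, and estimate each on the three regions. On $L\setminus S^*$ Propositions \ref{prop:Fproperties}(2) and \ref{prop:tildeFproperties}(2) apply with ball radius $t^\alpha$. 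On the necks the $2$-jet Taylor estimates of Lemma \ref{l:taylor} and Remark \ref{rem:taylor}, combined with Equation \ref{eq:pullback_estimates} and the passage from pointwise to weighted $W^p_{1,\boldsymbol{\beta}_{\bt}-2}$-norms (as at the end of the proof of Proposition \ref{prop:Fproperties}, introducing the factor $w_{\bt}\rho^{2-m/p}$), yield the factor $t^{\alpha+\min\{\beta_i-2\}}$, the discrepancy with $t^\alpha$ coming precisely from the fact that on a neck $\rho=r$ runs down to $t_i\hat{R}$ while the weight there is $\beta_i$. On the rescaled parts one transports to $(\hat{L},\hat{g})$: the weighted norms on $L_{\bt}$ and on $(\hat{L},t_i^2\hat{g})$ differ by powers of $t_i$ (cf. the identity $\|(1-\eta_t)f\|_{W^p_{k,\boldsymbol{\beta}_{\bt}}(g_{\bt})}=t_i^{-\beta_i}\|(1-\eta_t)f\|_{W^p_{k,\hat{\boldsymbol{\beta}}}(\hat{g})}$ in the proof of Theorem \ref{thm:sum_injective}, cf. Equation \ref{eq:1-eta}), the ball $B_{t^\alpha}$ in the $L_{\bt}$-norm corresponds under $f\mapsto t_i^{-2}f$ to a ball of radius $\le C\,t_i^{\beta_i-2}t^\alpha$ in the $\hat{g}$-norm, and feeding this into Proposition \ref{prop:tildeFproperties}(2) for $\hat{L}$ and converting back --- the rescalings $\|\cdot\|_{t_i}=t_i^{2-\beta_i}\|\cdot\|$ and $\hat{\mbox{d}}_{t_i}=t_i^{2-\beta_i}\hat{\mbox{d}}_i$ of Equation \ref{eq:tnorms} absorbing all residual $t_i$-powers --- leaves exactly $C\,t^{\alpha+\min\{\beta_i-2\}}$ times $\mbox{d}_{\bt}(\tilde{e},\tilde{d})+\|v-u\|_{\bt}+\|f-g\|_{W^p_{3,\boldsymbol{\beta}_{\bt}}}$. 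The delicate part is verifying that each of these $t_i$-exponents cancels against the norm rescalings as claimed, and that the contributions of the transition regions $\Sigma_i\times[t_i^\tau,2t_i^\tau]$ (where $dA_{t_i}$ and the cut-off $G$ interfere) are absorbed using $\tau$ close to $1$ as in Lemma \ref{l:induced_metric}.
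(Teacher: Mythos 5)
Your proposal is correct and follows essentially the same route as the paper: the well-definedness in (1) and the exponent $\alpha+\min\{\beta_i-2\}$ in (4) both come from the composed embeddings $W^p_{3,\boldsymbol{\beta}_{\bt}}\hookrightarrow C^2_{\boldsymbol{\beta}_{\bt}}\hookrightarrow C^2_2$ (Theorem \ref{thm:normstequivalent} plus Lemma \ref{l:changing_weight_estimates}), the $\partial_0$ bounds from the compactly supported Lie-derivative terms transported through the rescaled norms of Equation \ref{eq:tnorms}, and part (4) from the same triangle-inequality splitting into three one-variable differences combined with scale-invariant $2$-jet estimates and weighted integration. The only cosmetic differences are that the paper works directly on $L_{\bt}$ via the uniform estimate $|\nabla^k(\Phi_{L_{\bt}}^*\Imag\,\tilde{\Omega})|_{g_{\bt}}\leq C_k\rho_{\bt}^{-k}$ rather than transporting each region back to $L$ or $\hat{L}$, and derives $\chi_{\bt}=t_i^2\chi$ from the rescalings $\star_{t_i}=t_i^{m-2k}\star$ and $d^\star=t_i^2d^{\star_{t_i}}$ rather than from the Hamiltonian of the conjugated vector field.
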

\begin{proof}
As in the proof of Lemma \ref{lemma:extra}, to show that $F_{\bt}$ and $\tilde{F}_{\bt}$ are well-defined the main issue is to prove that the $C^2_2$ norm of any $f\in B_{t^{\alpha}}$ is sufficiently small. Theorem \ref{thm:embedding} and Lemma \ref{l:changing_weight_estimates} give embeddings
\begin{equation}\label{eq:composed_embeddings}
 W^p_{3,\boldsymbol{\beta}_{\bt}}\hookrightarrow C^2_{\boldsymbol{\beta}_{\bt}}\hookrightarrow C^2_2
\end{equation}
such that, for any function $f\in W^p_{3,\boldsymbol{\beta}_{\bt}}$,
\begin{equation*}
 \|f\|_{C^2_2}\leq C\max\{t_i^{\beta_i-2}\}\|f\|_{W^p_{3,\boldsymbol{\beta}_{\bt}}}\leq C(\min\{t_i\})^{\min\{\beta_i-2\}}\|f\|_{W^p_{3,\boldsymbol{\beta}_{\bt}}}.
\end{equation*}
For $f\in B_{t^\alpha}$ this implies
\begin{equation*}
\begin{split}
 \|f\|_{C^2_2} &\leq C\min{\{t_i\}}^{\min{\{\beta_i-2\}}}t^\alpha=C\left(\frac{\min\{t_i\}}{t}\right)^{\min\{\beta_i-2\}}t^{\alpha+\min\{\beta_i-2\}}\\
&\leq C(1/M)^{\min\{\beta_i-2\}}t^{\alpha+\min\{\beta_i-2\}},
\end{split}
\end{equation*}
 which is small when $t$ is small, by definition of $\alpha$. This proves (1).

Concerning (2), write $\tilde{F}_{\bt}(\tilde{e},v,f)=\star_{\bt}(dv+df)^*\Phi_{L_{\bt}}^*(\bt\cdot\tilde{e}\cdot \bt^{-1})^*\Imag\, \tilde{\Omega}$. As seen above, the condition $f\in \tilde{B}_{t^\alpha}$ implies that the second derivatives of $f$ are uniformly bounded. To bound $\tilde{F}_{\bt}$ it is thus sufficient to bound the $m$-form $\Phi_{L_{\bt}}^*(\bt\cdot\tilde{e}\cdot \bt^{-1})^*\Imag\, \tilde{\Omega}$ on $\mathcal{U}_{\bt}$. Recall also that on $\mathcal{U}_{\bt}$ we use the pull-back metric. To prove (2) it is thus sufficient to bound the $m$-form $(\bt\cdot\tilde{e}\cdot \bt^{-1})^*\Imag\, \tilde{\Omega}$ on $\C^m$. Notice that $t_i^*\Imag\,\tilde{\Omega}=t_i^m\Imag\,\tilde{\Omega}$ and that there exists $C>0$ such that
\begin{equation*}|\tilde{e}^*\Imag\, \tilde{\Omega}|\leq C,\ \ |(t_i^{-1})^*\tilde{e}^*\Imag\, \tilde{\Omega}|\leq Ct_i^{-m},
 \end{equation*}
because the symplectomorphism $\tilde{e}$ has compact support. By continuity, we may assume $C$ is independent of $\tilde{e}\in \tilde{B}_{t^\alpha}$. This shows that $(\bt\cdot\tilde{e}\cdot \bt^{-1})^*\Imag\, \tilde{\Omega}$ is bounded independently of $\bt$ and $\tilde{e}$, thus yielding the desired bound on $\tilde{F}_{\bt}$. It follows from Remark \ref{rem:phitilde} that derivatives with respect to $\tilde{e}$ can be written in terms of Lie derivatives, \textit{e.g.} given $X\in T_{\tilde{e}}\tilde{\E}_{\bt}$,
\begin{equation*}
\partial_0\tilde{F}_{\bt|\tilde{e},v,f}(X) =\star_{\bt}((\Phi_{L_{\bt}}^{\bt\cdot\tilde{e}\cdot\bt^{-1}}\circ(dv+df))^*\mathcal{L}_X\Imag\,\tilde{\Omega}).
\end{equation*}
The same arguments then prove the desired bounds for such derivatives, but we now allow $C$ to depend on $x$ to emphasize the fact that, away from the singularities, $\tilde{e}$ acts trivially so $\partial_0\tilde{F}_{\bt}=0$. The factors of $t_i$ are due to the definition of $\|\cdot\|_{t_i}$, cf. Equation \ref{eq:tnorms}.

To prove (3) we start by noticing that Proposition \ref{prop:tildeFproperties} holds for any Lagrangian conifold, thus for any $L_{\bt}$ for fixed $\bt$. This shows that $\tilde{P}_{\bt}$ is of the claimed form for some $\chi_{\bt}$ which vanishes away from the singularities. It is clear that, near the singularities of $L$, 
\begin{equation}\label{eq:Ft_independent}
 \tilde{F}_{\bt}(\tilde{e},0,0)=\star((\tilde{e}\circ\iota)^*\Imag\,\tilde{\Omega})=\tilde{F}(\tilde{e},0,0).
\end{equation}
We now want to show that this is true also near the singularities of $\hat{L}$. 
To this end, recall that $\Phi_{L_{\bt}}\circ 0=\iota_{\bt}$, which (near the singularities of $\hat{L}$) is simply a rescaling of $\iota$, so for $(v,f)=(0,0)$ we obtain 
\begin{equation*}
\begin{split}
\tilde{F}_{\bt}(\tilde{e},0,0)&=\star_{\bt}((\bt\cdot\tilde{e}\cdot\bt^{-1}\circ\iota_{\bt})^*\Imag\,\tilde{\Omega})\\
&=\star_{\bt}((\bt\cdot\tilde{e}\circ\iota)^*\Imag\,\tilde{\Omega})=\star_{\bt}((\tilde{e}\circ\iota)^*\bt^*\Imag\,\tilde{\Omega}).
\end{split}
\end{equation*}
Now recall how the Hodge star operator behaves under rescaling of the metric: on $k$-forms, $\star_{t_i}=t_i^{m-2k}\star$. This implies that the $\bt$ factors above cancel, proving Equation \ref{eq:Ft_independent}. This equation shows that, near each singularity, $\partial_0\tilde{F}_{\bt|e,0,0}=\partial_0\tilde{F}_{|e,0,0}=d^\star((\star\iota^*(\Real\,\tilde{\Omega}))d\,\chi)$. Again using the rescaling properties of $\star$, we find $\star\iota^*(\Real\,\tilde{\Omega})=\star_{\bt}\iota_{\bt}^*(\Real\,\tilde{\Omega})$ and $d^\star=t_i^2d^{\star_{t_i}}$. This completes the proof of (3).

The proof of (4) requires several steps. To begin with, let us prove the equivalent statement for the restricted map $Q_{\bt}$. Substituting scale-invariant norms into Equation \ref{eq:pullback_estimates} shows that, on $\mathcal{U}_{\bt}$, 
\begin{equation*}
 |\nabla^k(\Phi_{L_{\bt}}^*\Imag\tilde{\Omega})|_{g_{\bt}}\leq C_k(\rho_{\bt}^{-k}),
\end{equation*}
where $C_k$ are independent of $\bt$. This leads to estimates
\begin{equation*}
 |\partial_1^2F'_{\bt}|\leq C\rho_{\bt}^{-2},\ \ |\partial_1\partial_2F'_{\bt}|\leq C\rho_{\bt}^{-1},\ \ |\partial_2^2F'_{\bt}|\leq C,
\end{equation*}
where all norms are calculated with respect to $g_{\bt}$ and $C$ depends on $C_0$, $C_1$ and $C_2$ above.
It follows that there exists $C>0$ such that, for each $\bt$ and each $x\in L_{\bt}$,
\begin{equation*}
\begin{split}
|Q_{\bt}(f)-Q_{\bt}(g)|
&\leq C(\|f\|_{C^2_2}+\|g\|_{C^2_2})(\rho_{\bt}^{-1}|df-dg|+|\nabla^2f-\nabla^2g|).\\
\end{split}
\end{equation*}
\begin{equation*}
\begin{split}
|\rho_{\bt}d\left(Q_{\bt}(f)-Q_{\bt}(g)\right)|
\leq C\Bigl\{&\left(\rho_{\bt}^{-1}|df-dg|+|\nabla^2f-\nabla^2g|+\rho_{\bt}|\nabla^3f-\nabla^3g|\right)\\
&\cdot(\|f\|_{C^2_2}+\|g\|_{C^2_2})+\|f-g\|_{C^2_2}\cdot\rho_{\bt}(|\nabla^3f|+|\nabla^3g|)\Bigr\}.\\
\end{split}
\end{equation*}
Multiplying both sides of these equations by $w_{\bt}\rho_{\bt}^{2-m/p}$, raising to the power $p$ and integrating, we obtain 
\begin{equation*}
\begin{split}
\|Q_{\bt}(f)-Q_{\bt}(g)\|_{W^p_{1,\boldsymbol{\beta}_{\bt}-2}}
&\leq C\Bigl\{\left(\|f\|_{C^2_2}+\|g\|_{C^2_2}\right)\cdot\|f-g\|_{W^p_{3,\boldsymbol{\beta}_{\bt}}}\\
&\ \ \ \ \ \ \ +\left(\|f\|_{W^p_{3,\boldsymbol{\beta}_{\bt}}}+\|g\|_{W^p_{3,\boldsymbol{\beta}_{\bt}}}\right)\cdot\|f-g\|_{C^2_2}\Bigr\}.
\end{split}
\end{equation*}
As in Proposition \ref{prop:tildeFproperties}, we can apply this estimate to the Lagrangian conifold defined by any $(\tilde{e},v,0)\in \tilde{B}_{t^\alpha}$. Choosing $f, g\in B_{t^{\alpha}}$ and using the embeddings of Equation \ref{eq:composed_embeddings} then gives
\begin{equation*}
\|\tilde{Q}_{\bt}(\tilde{e},v,f)-\tilde{Q}_{\bt}(\tilde{e},v,g)\|_{W^p_{1,\boldsymbol{\beta}_{\bt}-2}}
\leq C t^{\alpha+\min\{\beta_i-2\}}\|f-g\|_{W^p_{3,\boldsymbol{\beta}_{\bt}}}.
\end{equation*}
 As in Proposition \ref{prop:tildeFproperties}, we can prove that $C$ is independent of $(\tilde{e},v)$. 

The same methods, but this time using the embeddings $W^p_{2,\boldsymbol{\beta}_{\bt}-1}\hookrightarrow C^1_{\boldsymbol{\beta}_{\bt}-1}\hookrightarrow C^1_1$ for spaces of $1$-forms, show that 
\begin{equation*}
\|\tilde{Q}_{\bt}(\tilde{e},v,f)-\tilde{Q}_{\bt}(\tilde{e},u,f)\|_{W^p_{1,\boldsymbol{\beta}_{\bt}-2}}
\leq C t^{\alpha+\min\{\beta_i-2\}}\|v-u\|_{\bt}.
\end{equation*}
Finally, fix $(e,v,f)\in \tilde{B}_{t^\alpha}$. It follows from the estimates in (2) and from Lemma \ref{l:taylor} that, on the $i$-th component of $\hat{L}$,
\begin{equation*}
 |\tilde{Q}_{\bt}(\tilde{d},v,f)-\tilde{Q}_{\bt}(\tilde{e},v,f)|\leq C(x)t_i^{2\beta_i-4}\cdot\mbox{d}_{\bt}(\tilde{d},\tilde{e})\cdot\left(\mbox{d}_{\bt}(\tilde{d},e)+\mbox{d}_{\bt}(\tilde{e},e)\right).
\end{equation*}
First derivatives (with respect to $x$) can be studied analogously. Multiplying by the appropriate weight factors and integrating, on the $i$-th component we obtain
\begin{equation*}
\begin{split}
 \|\tilde{Q}_{\bt}(\tilde{d},v,f)-\tilde{Q}_{\bt}(\tilde{e},v,f)\|_{W^p_{1,\boldsymbol{\beta}_{\bt}-2}}&\leq \|C\|_{W^p_{1,\boldsymbol{\beta}_{\bt}-2}}t_i^{2\beta_i-4}\cdot\mbox{d}_{\bt}(\tilde{d},\tilde{e})\cdot\left(\mbox{d}_{\bt}(\tilde{d},e)+\mbox{d}_{\bt}(\tilde{e},e)\right)\\
&\leq Ct_i^{\beta_i-2} \mbox{d}_{\bt}(\tilde{d},\tilde{e})\cdot\left(\mbox{d}_{\bt}(\tilde{d},e)+\mbox{d}_{\bt}(\tilde{e},e)\right)\\
&\leq C\max\{t_i^{\beta_i-2}\}t^{\alpha}\mbox{d}_{\bt}(\tilde{d},\tilde{e})\\
&\leq Ct^{\alpha+\min\{\beta_i-2\}}\mbox{d}_{\bt}(\tilde{d},\tilde{e}),\\
\end{split}
\end{equation*}
where we use the fact that, on the support of $C(x)$, the metric is simply rescaled so $\|C\|_{W^p_{1,\boldsymbol{\beta}_{\bt}-2}}\leq Ct_i^{2-\beta_i}$. Estimates on the components of $L$ are similar but slightly stronger, because here the data does not depend on $\bt$: the upper bound is thus of the form $Ct^\alpha\mbox{d}_{\bt}(\tilde{d},\tilde{e})$.

Combining the above estimates via the triangle inequality proves (4).
\end{proof}

%%%%%%%%%%%%%%%%%%%%%%%%%%%%%%%%%%%%%%%%%%%%%%%%%%%%%%%%%%%%%%%%%%%%%%%%%
%%%%%%%%%%%%%%%%%%%%%%%%%%%%%%%%%%%%%%%%%%%%%%%%%%%%%%%%%%%%%%%%%%%%%%%%%

\section{Connect sums of SL conifolds}\label{s:SL_sum}

Let $(L,\iota)$, $(\hat{L},\hat{\iota})$ be marked compatible SL conifolds in $\C^m$. Their connect sum $(L_{\boldsymbol{t}},\iota_{\boldsymbol{t}})$ is a Lagrangian conifold. By construction it is SL except in the compact subset where gluing occurred. The goal of this paper is to prove that there exists a small perturbation $\iota'_{\boldsymbol{t}}$ of $\iota_{\boldsymbol{t}}$ such that $(L_{\bt},\iota'_{\bt})$ is SL. The proof will depend on careful choices of weights $\boldsymbol{\beta}$, $\hat{\boldsymbol{\beta}}$ and of constants $p$, $\tau$, $\alpha$ used in the gluing and perturbation. We will distinguish two cases and some choices will vary accordingly. The following assumptions are however common to both cases:
\begin{description}
\item[A1] Let $\mu_i$, respectively $\hat{\lambda}_i$, denote the convergence rates of $L$, respectively $\hat{L}$, on the marked ends. We choose the parameter $\tau\in (0,1)$, used in the construction of $(L_{\boldsymbol{t}},\iota_{\boldsymbol{t}})$, sufficiently close to $1$ as in Lemma \ref{l:induced_metric}: specifically, such that
\begin{equation*}
 \label{eq:tau}
\tau>\frac{2-\hat{\lambda}_i}{\mu_i-\hat{\lambda}_i}.
\end{equation*}
\item[A2] We assume that the weights $\boldsymbol{\beta},\hat{\boldsymbol{\beta}}$ are compatible as in Definition \ref{def:compatible}. We can then define $\boldsymbol{\beta}_{\bt}$ as in Section \ref{s:lagr_sum}. In particular,  on the $i$-th neck $\boldsymbol{\beta}_{\bt}$ takes the constant value $\beta_i=\hat{\beta}_i$. 

We further assume that 
$\hat{\beta}_i\in (2-m,0)$ for all ends $\hat{S}_i\in\hat{S}^{*}$
and that $\hat{\beta}_i>\hat{\lambda}_i$: this is possible because our connect sum construction requires $\hat{\lambda}_i<0$.

We finally assume that, for all necks, the difference $|\beta_i-\beta_j|$ is sufficiently small: Lemma \ref{l:alpha} can be used to quantify this statement precisely. Notice however that we could simply choose weights such that $\beta_i=\beta_j$ for all necks in $L_{\bt}$. 
\item[A3] We choose $p>m$ and $\alpha$ such that
\begin{equation*}
  \max{\{2-\beta_i\}}<\alpha<\min{\{(1-\tau)(2-\hat{\lambda}_i)+\tau(2-\beta_i)\}},
 \end{equation*}
where $\max$ and $\min$ are taken over all necks. In particular, $\alpha+\min\{\beta_i-2\}>0$.
\end{description}
Assumption A3 actually depends on the following fact.
\begin{lemma} \label{l:alpha}
Assumption A2 implies that, for an appropriate choice of $\tau$ satisfying A1, there exists $\alpha>2$ satisfying Assumption A3.
\end{lemma}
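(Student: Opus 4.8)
The plan is to unwind Assumptions A1--A3 into a single elementary inequality between the neck weights $\beta_i=\hat{\beta}_i$ and the rates $\mu_i,\hat{\lambda}_i$, and then to exhibit an admissible pair $(\tau,\alpha)$ by hand. First I would record the trivial sign observations. Since $\hat{\beta}_i\in(2-m,0)$ we have $\beta_i<0$, hence $2-\beta_i>2$; therefore \emph{any} $\alpha>\max_i(2-\beta_i)$ automatically satisfies both $\alpha>2$ and $\alpha+\min_i(\beta_i-2)=\alpha-\max_i(2-\beta_i)>0$. So the only genuine content of A3 is that the open interval
\[
\Big(\max_i(2-\beta_i),\ \min_i\big((1-\tau)(2-\hat{\lambda}_i)+\tau(2-\beta_i)\big)\Big)
\]
be nonempty. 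Likewise, since $\mu_i>2$ and $\hat{\lambda}_i<0$ we have $2-\hat{\lambda}_i<\mu_i-\hat{\lambda}_i$, so $\tau_*:=\max_i\frac{2-\hat{\lambda}_i}{\mu_i-\hat{\lambda}_i}<1$ strictly, and A1 amounts to choosing $\tau$ in the genuine interval $(\tau_*,1)$.

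Next I would reduce the nonemptiness of the $\alpha$-interval to a smallness condition on $\max_{i,j}|\beta_i-\beta_j|$, which is exactly what the ``sufficiently small'' clause of A2 is meant to encode. Writing $\nu_i:=\beta_i-\hat{\lambda}_i$, which is positive by A2 (there $\hat{\beta}_i>\hat{\lambda}_i$ and $\beta_i=\hat{\beta}_i$), a direct rearrangement shows that for a fixed neck index $j$ the inequality $\max_i(2-\beta_i)<(1-\tau)(2-\hat{\lambda}_j)+\tau(2-\beta_j)$ is equivalent to $\beta_j-\min_i\beta_i<(1-\tau)\nu_j$. Since $\beta_j-\min_i\beta_i\le\max_{i,k}|\beta_i-\beta_k|$, it suffices to require $\max_{i,k}|\beta_i-\beta_k|<(1-\tau)\min_i\nu_i$. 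I would therefore make precise the smallness clause of A2 as
\[
\delta:=\max_{i,j}|\beta_i-\beta_j|<(1-\tau_*)\min_i\nu_i,
\]
which is vacuous when all the $\beta_i$ agree on the necks, recovering the remark following A2.

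Finally I would close the argument by choosing $\tau$ in the interval $\big(\tau_*,\,1-\delta/\min_i\nu_i\big)$, which is nonempty precisely by the displayed form of A2's smallness, and which satisfies A1. For this $\tau$ one has $\delta<(1-\tau)\min_i\nu_i\le(1-\tau)\nu_j$ for every $j$, so by the previous step $\max_i(2-\beta_i)<(1-\tau)(2-\hat{\lambda}_j)+\tau(2-\beta_j)$ for all $j$, hence the $\alpha$-interval is nonempty; any $\alpha$ in it, together with any $p>m$, then satisfies A3, and $\alpha>\max_i(2-\beta_i)>2$. I do not anticipate a real obstacle: this is a bookkeeping lemma making the informal ``$\tau$ close to $1$, $|\beta_i-\beta_j|$ small'' precise. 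The only mild subtlety is the tension between ``$\tau$ close to $1$'' (needed for A1 and for Lemma \ref{l:induced_metric}) and ``$1-\tau$ not too small'' (needed to absorb $\delta$); these are compatible exactly because $\tau_*<1$ strictly, which in turn is where the hypotheses $\mu_i>2$ and $\hat{\lambda}_i<0$ enter.
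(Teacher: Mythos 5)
Your proof is correct and follows essentially the same route as the paper's: rearranging the required inequality $2-\beta_i<(1-\tau)(2-\hat{\lambda}_j)+\tau(2-\beta_j)$ into an upper bound on $\tau$ of the form $\tau<1-(\beta_j-\beta_i)/(\beta_j-\hat{\lambda}_j)$, which the hypothesis $\hat{\beta}_j>\hat{\lambda}_j$ and the smallness of $|\beta_i-\beta_j|$ make compatible with the lower bound of A1. Your version merely makes the quantitative threshold $\delta<(1-\tau_*)\min_i\nu_i$ explicit, which is a fine (and slightly more precise) way of stating the "sufficiently small" clause of A2.
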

\begin{proof}
 Let us prove that, for any pair of necks indexed by $i$, $j$,
\begin{equation*}
 2-\beta_i<(1-\tau)(2-\hat{\lambda}_j)+\tau(2-\beta_j).
\end{equation*}
Rearranging, this is equivalent to
\begin{equation*}
 \hat{\lambda}_j-\beta_i<\tau(\hat{\lambda}_j-\beta_j).
\end{equation*}
The fact $\hat{\beta}_j>\hat{\lambda}_j$ implies 
\begin{equation*}
 \tau<\frac{\hat{\lambda}_j-\beta_i}{\hat{\lambda}_j-\beta_j}=1-\frac{\beta_j-\beta_i}{\beta_j-\hat{\lambda}_j}.
\end{equation*}
Think of this as a condition on $\tau$. Thanks to Assumption A2 this condition is compatible with Assumption A1. We can thus choose $\alpha$ as desired.
\end{proof}
We now distinguish two geometrically distinct cases. Each case requires a slightly different additional assumption, leading to its own gluing theorem.
\subsubsection*{AC SL connect sums}
Let us assume that $(L_{\bt},\iota_{\boldsymbol{t}})$ is an AC conifold. In other words, we assume that $\hat{L}$ is an AC conifold and that $S^*$ is the set of all CS ends of $L$, so that the connect sum construction removes all singularities at once. Notice however that, because we work in the immersed category, we have the flexibility of treating transverse intersection points either as singularities to be removed or as smooth points to be ignored, cf. also Example \ref{ex:SL_desing}.

We add another assumption to those listed above:
\begin{description}
\item[A4] The weights $\boldsymbol{\beta}$ on $L$, $\hat{\boldsymbol{\beta}}$ on $\hat{L}$, must satisfy a stronger version of the assumptions of Theorem \ref{thm:sum_injective}, ensuring surjectivity of the Laplacian. Specifically, we need to assume that the weights satisfy 
\begin{equation*}
\left\{
\begin{array}{l}
\hat{\beta}_i\in (2-m,0)\mbox{ for all ends $\hat{S}_i$ in $\hat{L}$}\\
\beta_i\in (2-m,0)\mbox{ for all ends $S_i$ in $L$}.\\
\end{array}
\right.
\end{equation*}
\end{description}
Define $F_{\bt}$ as in Equation \ref{eq:tFsobolev}. The above assumptions, together with the ideas of Lemma \ref{lemma:extra} case (2), guarantee the validity of Proposition \ref{prop:tFproperties} but we can now prove more, as follows. 
\begin{prop}\label{prop:Q_estimates}
The map $F_{\bt}$ has the following properties.
\begin{enumerate}
\item The linearized operator 
\begin{equation*}
 P_{\bt}:=d^{\star_{\bt}}\left((\star_{\bt}\iota_{\bt}^*(\Real\,\tilde{\Omega}))df\right): W^p_{3,\boldsymbol{\beta}_{\bt}}(L_{\bt})\rightarrow W^p_{1,\boldsymbol{\beta}_{\bt}-2}(L_{\bt})
\end{equation*}
is a linear self-adjoint elliptic operator and is a compactly-supported perturbation of $\Delta_{g_{\bt}}$. It thus has the same exceptional weights and change of index formula as $\Delta_{g_{\bt}}$, in the sense of Section \ref{ss:laplace_conifolds}. In particular, Corollary \ref{cor:laplaceresults} and Theorem \ref{thm:sum_injective} apply \textit{verbatim} to $P_{\bt}$.
\item There exists $C>0$ such that, for each $\bt$ sufficiently small,
\begin{equation*}
\|F_{\bt}(0)\|_{W^p_{1,\boldsymbol{\beta}_{\bt}-2}}\leq 
C\max{\{t^{(1-\tau)(2-\hat{\lambda}_i)+\tau(2-\beta_i)}\}}<t^\alpha/2,
\end{equation*}
where $\max$ is taken over all necks. 
%\item There exists $C>0$ such that, for each $\bt$ sufficiently small and $f,g\in B_{t^\alpha}$,
%\begin{equation}\label{eq:acQ_contraction}
%\|Q_{\bt}(f)-Q_{\bt}(g)\|_{W^p_{1,\boldsymbol{\beta}_{\bt}-2}}
%\leq C (\min\{t_i\})^{\min\{\beta_i-2\}}t^{\alpha}\|f-g\|_{W^p_{3,\boldsymbol{\beta}_{\bt}}}.
%\end{equation}
\end{enumerate}
\end{prop}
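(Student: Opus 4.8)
The plan is to prove the two items essentially independently. For (1) I would first recall the standard computation of the linearization of the Lagrangian-to-$\Imag\,\tilde\Omega$ map: as stated in Proposition \ref{prop:Fproperties}(1), with respect to the initial immersion the linearization is $f\mapsto d^\star((\star\iota^*\Real\,\tilde\Omega)df)$, and since $(L_{\bt},\iota_{\bt})$ is SL away from the gluing region, $\iota_{\bt}^*\Real\,\tilde\Omega$ is a nowhere-zero $m$-form which differs from $\vol_{g_{\bt}}$ only on the compact neck region. Thus $P_{\bt}$ has the same principal symbol as $\Delta_{g_{\bt}}$ and is a compactly-supported perturbation of it; in fact $P_{\bt} = d^\star((\star_{\bt}\iota_{\bt}^*\Real\,\tilde\Omega - 1)d(\cdot)) + \Delta_{g_{\bt}}$, and the first term is a second-order operator supported in a compact set, which for conifold Fredholm/index theory (cf. Section \ref{ss:laplace_conifolds}) does not affect exceptional weights or the change-of-index formula. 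Self-adjointness follows from $d^\star((\star_{\bt}\iota_{\bt}^*\Real\,\tilde\Omega)d(\cdot))$ being of divergence type with symmetric coefficients. Hence Corollary \ref{cor:laplaceresults} and Theorem \ref{thm:sum_injective} apply verbatim — here I use Assumption A4, which is exactly the hypothesis making Theorem \ref{thm:sum_injective} yield uniform invertibility with the surjective weights.

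For (2), the key point is that $F_{\bt}(0) = \star_{\bt}(\iota_{\bt}^*\Imag\,\tilde\Omega)$ vanishes identically outside the neck regions $\Sigma_i\times[t_i\hat R,\epsilon]$, since there $\iota_{\bt}$ coincides either with $\iota$, with $t_i\hat\iota + \iota(x_i)$, or with the original asymptotic cone, all of which are SL. So it suffices to estimate $F_{\bt}(0)$ on each neck. On $\Sigma_i\times[2t_i^\tau,\epsilon]$ the immersion is $\iota$, which is SL, so $F_{\bt}(0)=0$ there too; similarly on $\Sigma_i\times[t_i\hat R, t_i^\tau]$ the immersion is $t_i\hat\iota(\cdot/t_i)+\iota(x_i)$, SL, so $F_{\bt}(0)=0$. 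The only region where $F_{\bt}(0)\ne 0$ is the transition annulus $\Sigma_i\times[t_i^\tau,2t_i^\tau]$, where $\iota_{\bt}(\theta,r)=\Phi_{\mathcal{C}_i}(\theta,r,dA_{t_i}(\theta,r))+\iota(x_i)$ with $A_{t_i}$ the interpolated function of Equation \ref{eq:A_t}. On this annulus the estimate of Lemma \ref{l:induced_metric}'s proof already gives $\sup |\tnabla^k(\iota_{\bt}-\iota_i)|_{\tg_i} \le C t_i^{2-\hat\lambda_i+\tau(\hat\lambda_i-1-k)}$. Since $\iota_i$ (the asymptotic cone) is SL, expanding $\Imag\,\tilde\Omega(\iota_{\bt*}e_1,\dots,\iota_{\bt*}e_m)$ about $\iota_{i*}$ exactly as in the proof of Lemma \ref{lemma:extra} kills the leading term and leaves $|F_{\bt}(0)|_{g_{\bt}}\le C t_i^{2-\hat\lambda_i+\tau(\hat\lambda_i-2)} = C t_i^{(1-\tau)(2-\hat\lambda_i)-\tau}$ pointwise, with analogous bounds on $|\tnabla F_{\bt}(0)|$.

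Next I would convert this pointwise bound into the weighted Sobolev bound. On $\Sigma_i\times[t_i^\tau,2t_i^\tau]$ the radius function is $\rho_{\bt}=r\simeq t_i^\tau$ and the weight is the constant $\beta_i$, so $w_{\bt}\rho_{\bt}^{2-m/p}\simeq t_i^{-\tau\beta_i+\tau(2-m/p)}$, while the volume of this annulus in the $g_{\bt}\simeq\tg_i$ metric is $\simeq t_i^{\tau m}$ (it is a scaled copy of $\Sigma_i\times[1,2]$ rescaled by $t_i^\tau$). Putting these together, $\|F_{\bt}(0)\|_{W^p_{1,\boldsymbol\beta_{\bt}-2}} \le C\, t_i^{\tau(2-\beta_i)}\cdot t_i^{(1-\tau)(2-\hat\lambda_i)-\tau}\cdot t_i^{\tau} = C\, t_i^{(1-\tau)(2-\hat\lambda_i)+\tau(2-\beta_i)}$, where the powers of $t_i$ from the $\rho_{\bt}^{2-m/p}$ factor and the volume factor combine to exactly reconstitute the scale-invariant measure and contribute no net power beyond what the explicit decay provides — this bookkeeping is the step most prone to sign errors and I would do it carefully. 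Taking the max over necks gives the first inequality. For the strict inequality $<t^\alpha/2$: Assumption A3 requires $\alpha < \min\{(1-\tau)(2-\hat\lambda_i)+\tau(2-\beta_i)\}$, so each exponent strictly exceeds $\alpha$; since $t=\max t_i<1$ and, in the relevant sector, all $t_i$ are comparable up to the fixed factor $M$, we get $\max\{t_i^{(1-\tau)(2-\hat\lambda_i)+\tau(2-\beta_i)}\} \le C t^{\,\alpha+\delta}$ for some $\delta>0$, which is $<t^\alpha/2$ once $t$ is small enough. The main obstacle is the careful weighted-norm computation on the transition annulus — tracking how the factors of $t_i$ from the weight $w_{\bt}$, the radius power $\rho_{\bt}^{2-m/p}$, the volume, and the explicit decay of $F_{\bt}(0)$ all combine — and verifying that the resulting exponent matches exactly the upper endpoint in Assumption A3, so that A3 delivers the strict bound $t^\alpha/2$ with room to spare.
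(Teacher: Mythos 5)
Your proposal follows essentially the same route as the paper for both parts: for (1), the observation that $\star_{\bt}\iota_{\bt}^*(\Real\,\tilde{\Omega})\equiv 1$ away from the necks so that $P_{\bt}$ is a compactly-supported perturbation of $\Delta_{g_{\bt}}$; for (2), a pointwise estimate supported on the transition annuli $\Sigma_i\times[t_i^\tau,2t_i^\tau]$ followed by integration against the weighted measure. Where the paper simply cites Joyce's Proposition 6.4 for the pointwise bound, you re-derive it from the estimates in the proof of Lemma \ref{l:induced_metric} and the expansion argument of Lemma \ref{lemma:extra}; that is legitimate and equivalent (Assumption A1 absorbs the $t_i^{\tau(\mu_i-2)}$ contribution either way). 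The one small ingredient you omit in (1) is that the uniform estimate of Theorem \ref{thm:sum_injective} is obtained by rescaling on the $\hat{L}$ side, so one should also note that $P_{\bt}$ obeys the same rescaling law $P_{t^2g}=t^{-2}P_g$ as the Laplacian.

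On the bookkeeping in (2), which you yourself flagged as the delicate step: your two intermediate exponents are both off, in mutually cancelling ways. First, $2-\hat{\lambda}_i+\tau(\hat{\lambda}_i-2)=(1-\tau)(2-\hat{\lambda}_i)$, not $(1-\tau)(2-\hat{\lambda}_i)-\tau$, so the correct pointwise bound is $|F_{\bt}(0)|\leq C\,t_i^{(1-\tau)(2-\hat{\lambda}_i)}$ (this matches the paper's). Second, the weight and volume factors combine to exactly $t_i^{\tau(2-\beta_i)}$ with no further factor of $t_i^{\tau}$: on the annulus
\begin{equation*}
\int_{\Sigma_i\times[t_i^\tau,2t_i^\tau]}\bigl(w_{\bt}\rho_{\bt}^{2-m/p}\bigr)^p\,\vol_{g_{\bt}}\simeq \bigl(t_i^{\tau}\bigr)^{(2-\beta_i)p-m}\cdot\bigl(t_i^{\tau}\bigr)^{m}=t_i^{\tau(2-\beta_i)p},
\end{equation*}
so the extra $t_i^{\tau}$ you insert at the end exactly compensates the $t_i^{-\tau}$ lost in the first step. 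Your final exponent $(1-\tau)(2-\hat{\lambda}_i)+\tau(2-\beta_i)$ therefore agrees with the paper's, but the two intermediate lines as written do not. The remaining points --- the support statement for $F_{\bt}(0)$, the use of A3 and the sector condition $\max\{t_i\}/\min\{t_i\}<M$ to obtain the strict bound $t^\alpha/2$, and the role of A4 in (1) --- are correct.
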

\begin{proof}
The fact that $(L_{\bt}, \iota_{\bt})$ is SL away from the necks implies that $\star_{\bt}\iota_{\bt}^*(\Real\,\tilde{\Omega})\equiv 1$ away from the necks, so $P_{\bt}=\Delta_{g_{\bt}}$ there. The proofs of Corollary \ref{cor:laplaceresults} and Theorem \ref{thm:sum_injective} depend mostly on the properties of the operator listed in (1). The only additional ingredient is the rescaling property $\Delta_{t^2g}=t^{-2}\Delta_g$. It is simple to check that $P_{\bt}$ rescales similarly. This proves (1).  

To prove (2) we rely on the estimates given in Joyce \cite{joyce:III}. His computations concern connect sum SLs in general ``almost Calabi-Yau'' ambient spaces. In our case the ambient space is $\C^m$ so certain things simplify.
 
Joyce's Proposition 6.4 computes point-wise estimates for $F_{\bt}(0)$. In our case $L_{\bt}$ is SL away from the regions $\Sigma_i\times [t_i^\tau,2t_i^\tau]$ so Joyce's result simplifies, giving:
\begin{align*}
|F_{\bt}(0)|(\theta,r) &\leq
\left\{
\begin{array}{ll}
C t_i^{\tau(\mu_i-2)}+Ct_i^{(1-\tau)(2-\hat{\lambda}_i)} & \mbox{ on } \Sigma_i\times[t_i^\tau,2t_i^\tau]\\
0 & \mbox{ elsewhere}
\end{array}
\right.\\
|d(F_{\bt}(0))|(\theta,r) &\leq
\left\{
\begin{array}{ll}
C t_i^{\tau(\mu_i-3)}+Ct_i^{(1-\tau)(2-\hat{\lambda}_i)-\tau} & \mbox{ on } \Sigma_i\times[t_i^\tau,2t_i^\tau]\\
0 & \mbox{ elsewhere}.
\end{array}
\right.
\end{align*}
Multiplying the first equation by $w_{\bt}\rho_{\bt}^{2-\frac{m}{p}}$, raising to the power $p$ and integrating we obtain, for each neck,
\begin{equation*}
\begin{split}
 \|F_{\bt}(0)\|_{L^p_{\boldsymbol{\beta}_{\bt}-2}}^p &=\int_{\Sigma_i\times[t_i^\tau,2t_i^\tau]}|r^{2-\beta_i}F_{\bt}(0)|^pr^{-m}\mbox{vol}_{g_{\bt}}\leq C\int_{\Sigma_i\times[t_i^\tau,2t_i^\tau]}|r^{2-\beta_i}F_{\bt}(0)|^pr^{-m}\mbox{vol}_{\tg}\\
&\leq C t_i^{\tau(2-\beta_i)p}(t_i^{\tau(\mu_i-2)}+t_i^{(1-\tau)(2-\hat{\lambda}_i)})^p\\
&\leq C t^{\tau(2-\beta_i)p}(t^{\tau(\mu_i-2)}+t^{(1-\tau)(2-\hat{\lambda}_i)})^p.
\end{split}
\end{equation*}
Analogously, using $w_{\bt}\rho_{\bt}^{1-\frac{m}{p}}$, one obtains $L^p_{\boldsymbol{\beta}_{\bt}-1}$ estimates for $d(F_{\bt}(0))$. Combining, we find
\begin{equation*}
\|F_{\bt}(0)\|_{W^p_{1,\boldsymbol{\beta}_{\bt}-2}}\leq 
C \max{\{t^{\tau(2-\beta_i)}(t^{\tau(\mu_i-2)}+t^{(1-\tau)(2-\hat{\lambda}_i)})\}}.
\end{equation*}
Our Assumption A1 implies that $(1-\tau)(2-\hat{\lambda}_i)< \tau(\mu_i-2)$, allowing us to simplify this estimate. Our choice of $\alpha$ now implies (2).
%Recall from the proof of Proposition \ref{prop:tFproperties} the embeddings
%\begin{equation*}
% W^p_{3,\boldsymbol{\beta}_{\bt}}\hookrightarrow C^2_{\boldsymbol{\beta}_{\bt}}\hookrightarrow C^2_2.
%\end{equation*}
%Applying these to Equation \ref{eq:tQ_contraction} we obtain
%\begin{equation*}
%\begin{split}
%\|Q_{\bt}(f)-Q_{\bt}(g)\|_{W^p_{1,\boldsymbol{\beta}_{\bt}-2}}
%\leq C\Big\{&
%(\min\{t_i\})^{\min\{\beta_i-2\}}\left(\|f\|_{W^p_{3,\boldsymbol{\beta}_{\bt}}}+\|g\|_{W^p_{3,\boldsymbol{\beta}_{\bt}}}\right)\cdot\|f-g\|_{W^p_{3,\boldsymbol{\beta}_{\bt}}}\\
%&+\left(\|f\|_{W^p_{3,\boldsymbol{\beta}_{\bt}}}+\|g\|_{W^p_{3,\boldsymbol{\beta}_{\bt}}}\right)\cdot(\min\{ t_i\})^{\min\{\beta_i-2\}}\|f-g\|_{W^p_{3,\boldsymbol{\beta}_{\bt}}}\Big\}.\\
%\end{split}
%\end{equation*}
%Choosing $f,g\in B_{t^\alpha}$ gives (3).
\end{proof}
We can now state and prove our first result concerning SL connect sums.
\begin{theorem}\label{thm:ACSLgluing}
Let $(L,\iota)$, $(\hat{L},\hat{\iota})$ be compatible marked SL conifolds. Assume that $\hat{L}$ is an AC conifold and that $S^*$ contains all CS ends of $L$, so that the connect sum $(L_{\boldsymbol{t}},\iota_{\bt})$ is an AC Lagrangian submanifold. Let $\mathcal{C}_i$ denote the corresponding cones. Choose constants $p$, $\tau$, $\alpha$ and weights $\boldsymbol{\beta}$, $\hat{\boldsymbol{\beta}}$ satisfying Assumptions A1-A4 above. 

Then for all $\bt>0$ in the circular sectors defined in Proposition \ref{prop:tFproperties} there exists $f_{\bt}\in B_{t^\alpha}\cap C^\infty_{\boldsymbol{\beta}_{\bt}}(L_{\bt})$ such that $\iota_{\bt}':=\Phi_{L_{\bt}}\circ df_{\bt}:L_{\bt}\rightarrow \C^m$ is an AC SL submanifold asymptotic to the same cones $\mathcal{C}_i$.
\end{theorem}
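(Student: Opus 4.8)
The plan is to solve $F_{\bt}(f)=0$, with $F_{\bt}$ the map of Equation \ref{eq:tFsobolev}, by the Banach fixed point theorem on the ball $B_{t^\alpha}\subset W^p_{3,\boldsymbol{\beta}_{\bt}}(L_{\bt})$. Fix $M>1$. By Proposition \ref{prop:tFproperties}(1), for every $\bt$ in the associated circular sector with $|\bt|$ small the map $F_{\bt}$ is well-defined on $B_{t^\alpha}$; restricting Proposition \ref{prop:tFproperties}(3) and (4) to $\tilde{e}=e$, $v=0$ we obtain the decomposition $F_{\bt}(f)=F_{\bt}(0)+P_{\bt}(f)+Q_{\bt}(f)$, with $P_{\bt}$ the linearized operator of Proposition \ref{prop:Q_estimates}(1), $Q_{\bt}(0)=0$, and the Lipschitz estimate $\|Q_{\bt}(f)-Q_{\bt}(g)\|_{W^p_{1,\boldsymbol{\beta}_{\bt}-2}}\leq C\,t^{\alpha+\min\{\beta_i-2\}}\|f-g\|_{W^p_{3,\boldsymbol{\beta}_{\bt}}}$. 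Hence $F_{\bt}(f)=0$ if and only if $f$ is a fixed point of $\mathcal{N}_{\bt}(f):=-P_{\bt}^{-1}\bigl(F_{\bt}(0)+Q_{\bt}(f)\bigr)$, once $P_{\bt}$ is known to be invertible.

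I would first establish the uniform invertibility of $P_{\bt}$. By Proposition \ref{prop:Q_estimates}(1), $P_{\bt}$ is a compactly-supported perturbation of $\Delta_{g_{\bt}}$ with the same exceptional weights and change-of-index formula, so Corollary \ref{cor:laplaceresults} and Theorem \ref{thm:sum_injective} hold for it verbatim. Since $(L_{\bt},\iota_{\bt})$ is AC and Assumption A4 forces $\boldsymbol{\beta}_{\bt}\in(2-m,0)$ on every end of $L_{\bt}$, Corollary \ref{cor:laplaceresults}(1) gives that $P_{\bt}\colon W^p_{3,\boldsymbol{\beta}_{\bt}}(L_{\bt})\to W^p_{1,\boldsymbol{\beta}_{\bt}-2}(L_{\bt})$ is surjective (as $\boldsymbol{\beta}_{\bt}>2-m$) and injective (as $\boldsymbol{\beta}_{\bt}<0$), hence an isomorphism; the injectivity hypothesis of Theorem \ref{thm:sum_injective} concerning an end $S'$ is automatic because every component of a SL conifold in $\C^m$ carries an AC end, cf. Example \ref{e:C^m_examples}. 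Theorem \ref{thm:sum_injective} moreover supplies a constant $C_0>0$, independent of $\bt$, with $\|P_{\bt}^{-1}h\|_{W^p_{3,\boldsymbol{\beta}_{\bt}}}\leq C_0\|h\|_{W^p_{1,\boldsymbol{\beta}_{\bt}-2}}$.

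Next I would check that $\mathcal{N}_{\bt}$ is a self-map and a contraction of $B_{t^\alpha}$, uniformly over the sector. Proposition \ref{prop:Q_estimates}(2) bounds $\|F_{\bt}(0)\|_{W^p_{1,\boldsymbol{\beta}_{\bt}-2}}$ by $C\max\{t^{(1-\tau)(2-\hat{\lambda}_i)+\tau(2-\beta_i)}\}$, and Assumption A3 makes this exponent strictly greater than $\alpha$, so $\|F_{\bt}(0)\|=o(t^\alpha)$ as $|\bt|\to 0$. For $f\in B_{t^\alpha}$ the Lipschitz bound together with $Q_{\bt}(0)=0$ gives $\|Q_{\bt}(f)\|\leq C\,t^{\alpha+\min\{\beta_i-2\}}t^\alpha=o(t^\alpha)$ since $\alpha+\min\{\beta_i-2\}>0$ by A3. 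Therefore $\|\mathcal{N}_{\bt}(f)\|\leq C_0\bigl(\|F_{\bt}(0)\|+\|Q_{\bt}(f)\|\bigr)\leq t^\alpha$ and $\|\mathcal{N}_{\bt}(f)-\mathcal{N}_{\bt}(g)\|\leq C_0C\,t^{\alpha+\min\{\beta_i-2\}}\|f-g\|\leq\tfrac12\|f-g\|$ once $|\bt|$ is small enough in the sector. The contraction mapping theorem then yields a unique $f_{\bt}\in B_{t^\alpha}\subset W^p_{3,\boldsymbol{\beta}_{\bt}}(L_{\bt})$ with $F_{\bt}(f_{\bt})=0$. Standard weighted elliptic bootstrapping (cf. \cite{pacini:ics}) applied to the quasilinear elliptic equation $F_{\bt}(f_{\bt})=0$, whose linearization $P_{\bt}$ is elliptic, upgrades $f_{\bt}$ to $C^\infty_{\boldsymbol{\beta}_{\bt}}(L_{\bt})$, the weight being preserved by scale-covariance of the equation on the ends. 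By the definition of $F_{\bt}$, $F_{\bt}(f_{\bt})=0$ means $\Imag\bigl((\Phi_{L_{\bt}}\circ df_{\bt})^*\tilde{\Omega}\bigr)\equiv 0$, so $\iota'_{\bt}:=\Phi_{L_{\bt}}\circ df_{\bt}$ is SL; it is Lagrangian because $df_{\bt}$ is closed and $\Phi_{L_{\bt}}$ is a symplectomorphism, and since $\beta_i<0$ on each (AC) end of $L_{\bt}$ the $1$-form $df_{\bt}$ decays there, so by the deformation theory of Section \ref{ss:conifold_defs} the immersion $\iota'_{\bt}$ is an AC SL submanifold asymptotic to the same cones $\mathcal{C}_i$.

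The technical core has already been carried out in Propositions \ref{prop:tFproperties} and \ref{prop:Q_estimates} — well-definedness and $\bt$-uniform quadratic estimates for $F_{\bt}$, together with the $\bt$-uniform invertibility of $P_{\bt}$ — so that what remains is essentially bookkeeping. The step I would expect to be delicate, and where Assumption A3 does the real work, is the simultaneous matching of three scales: the radius $t^\alpha$ of the ball, the size $\|F_{\bt}(0)\|$ of the obstruction to being special Lagrangian, and the Lipschitz constant $O(t^{\alpha+\min\{\beta_i-2\}})$ of the quadratic remainder $Q_{\bt}$, arranged so that both the self-map and the contraction estimates close with a contraction factor independent of $\bt$; the accompanying elliptic regularity argument, promoting $f_{\bt}$ from $W^p_{3,\boldsymbol{\beta}_{\bt}}$ to $C^\infty_{\boldsymbol{\beta}_{\bt}}$ without changing the weight, is the other point needing some care.
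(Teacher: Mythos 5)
Your proposal is correct and follows essentially the same route as the paper: uniform invertibility of $P_{\bt}$ via Proposition \ref{prop:Q_estimates}(1) together with Corollary \ref{cor:laplaceresults} and Theorem \ref{thm:sum_injective}, then a contraction mapping argument for $G_{\bt}(f)=P_{\bt}^{-1}(-F_{\bt}(0)-Q_{\bt}(f))$ on $B_{t^\alpha}$ using the bound on $\|F_{\bt}(0)\|$ from Proposition \ref{prop:Q_estimates}(2) and the Lipschitz estimate from Proposition \ref{prop:tFproperties}(4), with Assumption A3 closing both the self-map and contraction estimates. The only cosmetic difference is that the paper invokes Joyce's regularity results \cite{joyce:I} where you describe the equivalent weighted elliptic bootstrap explicitly.
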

\begin{proof}
 Let $P_{\bt}$ denote the linearized operator. According to Proposition \ref{prop:Q_estimates} (1), our assumptions on $\boldsymbol{\beta}$, $\hat{\boldsymbol{\beta}}$ imply that 
\begin{equation*}
P_{\bt}:W^p_{3,\boldsymbol{\beta}_{\bt}}(L_{\bt})\rightarrow W^p_{1,\boldsymbol{\beta}_{\bt}-2}(L_{\bt})
\end{equation*}
is a surjective isomorphism and that there exists $C>0$ such that, for all $\boldsymbol{t}$ and
all $f\in W^p_{3,\boldsymbol{\beta}_{\boldsymbol{t}}}(L_{\boldsymbol{t}})$,
\begin{equation}\label{eq:Pt_invertible}
\|f\|_{W^p_{3,\boldsymbol{\beta}_{\boldsymbol{t}}}}\leq C\|P_{\boldsymbol{t}}
(f)\|_{W^p_{1,\boldsymbol{\beta}_{\boldsymbol{t}}-2}}.
\end{equation}
Notice that
\begin{equation*}
F_{\bt}(f)=0\Leftrightarrow -F_{\bt}(0)-Q_{\bt}(f)=P_{\bt}(f) \Leftrightarrow G_{\bt}(f)=f,
\end{equation*}
where $G_{\bt}$ is the map
\begin{equation*}
G_{\bt}:B_{t^{\alpha}}\subset W^p_{3,\boldsymbol{\beta}_{\bt}}(L_{\bt})\rightarrow  W^p_{3,\boldsymbol{\beta}_{\bt}}(L_{\bt}), \ \ f\mapsto P_{\bt}^{-1}(-F_{\bt}(0)-Q_{\bt}(f)).
\end{equation*}
In other words, functions $f$ such that $\Phi_{L_{\bt}}\circ df$ is a SL immersion of $L_{\bt}$ correspond exactly to fixed points of $G_{\bt}$. 

Let us prove that $G_{\bt}$ maps $B_{t^{\alpha}}$ into itself. Choose $f\in B_{t^{\alpha}}$. Then, using the fact that $Q_{\bt}(0)=0$ and Propositions \ref{prop:tFproperties}, \ref{prop:Q_estimates},
\begin{equation}\label{eq:selfmap}
\begin{split}
 \|G_{\bt}(f)\|_{W^p_{3,\boldsymbol{\beta}_{\bt}}}&\leq C (\|F_{\bt}(0)\|_{W^p_{1,\boldsymbol{\beta}_{\bt}-2}}+\|Q_{\bt}(f)\|_{W^p_{1,\boldsymbol{\beta}_{\bt}-2}})\\
&\leq C(\|F_{\bt}(0)\|_{W^p_{1,\boldsymbol{\beta}_{\bt}-2}}+\|Q_{\bt}(f)-Q_{\bt}(0)\|_{W^p_{1,\boldsymbol{\beta}_{\bt}-2}})\\
&< t^{\alpha}/2+Ct^{\alpha+\min\{\beta_i-2\}}\|f-0\|_{W^p_{3,\boldsymbol{\beta}_{\bt}}},\\
\end{split}
\end{equation}
where $C$ incorporates the constants of Proposition \ref{prop:tFproperties} and of Equation \ref{eq:Pt_invertible}.
When $\bt$ is small enough, $Ct^{\alpha+\min\{\beta_i-2\}}\leq 1/2$.
%\begin{equation*}
% C(\min\{t_i\})^{\min\{\beta_i-2\}}t^{\alpha}=C\left(\frac{\min\{t_i\}}{t}\right)^{\min\{\beta_i-2\}}t^{\alpha+\min\{\beta_i-2\}}<1/2.
%\end{equation*}
It is thus clear that $G_{\bt}$ maps $B_{t^{\alpha}}$ into itself. Likewise,
\begin{align*}
 \|G_{\bt}(f)-G_{\bt}(g)\|_{W^p_{3,\boldsymbol{\beta}_{\bt}}}&\leq C \|-Q_{\bt}(f)+Q_{\bt}(g) \|_{W^p_{1,\boldsymbol{\beta}_{\bt}-2}}\\
&\leq Ct^{\alpha+\min\{\beta_i-2\}}\|f-g\|_{W^p_{3,\boldsymbol{\beta}_{\bt}}},
\end{align*}
showing that $G_{\bt}$ is a contraction. We can now apply the standard Fixed Point Theorem for contractions of Banach spaces to prove that $G_{\bt}$ admits a unique fixed point $f_{\bt}\in B_{t^\alpha}$.

The fact that $f_{\bt}\in C^\infty_{\boldsymbol{\beta}_{\bt}}(L_{\bt})$ follows from Joyce's regularity results \cite{joyce:I}.
\end{proof}

\begin{remark}\label{rem:convergence_rate}
Joyce's regularity results \cite{joyce:I} Theorem 7.11 show that if the ends of $(L_{\bt},\iota_{\bt})$ converge towards the corresponding asymptotic cones with rates in the interval $(2-m,0)$ then they converge with rates $2-m+\epsilon$, for any small $\epsilon>0$. The same is true for $(L_{\bt},\iota'_{\bt})$. 
%In particular, $(L_{\bt},\iota_{\bt})$ and $(L_{\bt},\iota'_{\bt})$ have the same convergence rates. 

If the convergence rate of $(L_{\bt},\iota_{\bt})$ is greater than or equal to zero, then our assumption $\boldsymbol{\beta}_{\bt}<0$ implies that the perturbation $df_{\bt}$ is of lower order than the original convergence rate, so $(L_{\bt},\iota'_{\bt})$ will have the same convergence rate as $(L_{\bt},\iota_{\bt})$. 
\end{remark}

\begin{example}\label{ex:SL_planes_desing} Consider the setting of Example \ref{ex:lagr_planes_desing}. Assume the initial planes are SL, and that at each intersection point they satisfy the conditions yielding the existence of a Lawlor neck interpolating between the corresponding pair of planes, as in Example \ref{e:C^m_examples}. Lawlor necks satisfy the necessary condition on convergence rates so we obtain an AC Lagrangian desingularization $L_{\bt}$. Theorem \ref{thm:ACSLgluing} now proves that $L_{\bt}$ can be perturbed to an embedded AC SL which is asymptotic to the initial configuration of planes. According to Remark \ref{rem:convergence_rate}, the convergence rate is $2-m+\epsilon$, for any small $\epsilon>0$.
 \end{example}

\begin{example}\label{ex:SL_desing}
%It follows from \cite{joyce:I} Theorem 6.8 that if $L$ is a SL conifold and $x\in L$ is a smooth point then $L$ can be parametrized, in a neighbourhood of $x$, as a CS submanifold asymptotic to the cone $T_xL$. The key to this is the fact that any SL plane is ``rigid'' in the sense required by \cite{joyce:I}.
More generally, Theorem \ref{thm:ACSLgluing} allows us to desingularize all transverse intersections for which there exists a Lawlor neck. The fact that we are working in the immersed category allows us to not worry about other intersections: we simply label them as smooth, immersed, points. The theorem also proves that there is no upper bound on the number of AC ends of AC SL conifolds in $\C^m$: given any AC SL, we can intersect it with an arbitrary number of transverse SL planes satisfying Lawlor's angle condition and then resolve the intersections using Lawlor necks. 
\end{example}

\begin{example}\label{ex:SL_doubling}
Let $L$ be a SL conifold in $\C^m$ with one isolated conical singularity asymptotic to a SL cone $\mathcal{C}$. Recall from Example \ref{e:C^m_examples} that there exists a 2-ended AC SL $\hat{L}$ asymptotic to $\mathcal{C}\cup-(e^{i\pi/m}\mathcal{C})$. The gluing construction defined in Theorem \ref{thm:ACSLgluing} basically allows us to replace the conical singularity with a new AC end asymptotic to the rotated cone. The final result is a smooth AC SL with the same number of ends as the initial conifold. This same procedure can be applied to any number of singularities. Applying it to a smooth point of $L$ we obtain the same result as if we intersected it with the
 rotated SL plane and then glued in a Lawlor neck.
\end{example}

\subsubsection*{Connect sums with moving singularities}

The second case of interest is when $(L_{\bt},\iota_{\bt})$ has CS singularities. To obtain a gluing result in this case we need to replace Assumption A4 above with the following:
\begin{description}
\item[A4] We assume that the weights satisfy 
\begin{equation*}
\left\{
\begin{array}{l}
\hat{\beta}_i\in (2-m,0)\mbox{ for all AC ends $\hat{S}_i$ in $\hat{L}$ and $S_i$ in $L$}\\
\beta_i\in (2,2+\epsilon)\mbox{ for all CS ends $\hat{S}_i\in\hat{S}^{**}$ and $S_i\in S^{**}$},
\end{array}
\right.
\end{equation*}
where $\epsilon$ is chosen small enough so that $(2,2+\epsilon)$ does not contain any exceptional weights for the Laplace operator on the corresponding end. 
%and $2+\epsilon<\hat{\mu}_i$ (respectively, $\mu_i$).
\end{description}

We also need to impose the following ``stability'' assumption on the singularities in $\hat{S}^{**}\cup S^{**}$. We refer to \cite{joyce:II}, \cite{haskins:complexity}, \cite{ohnita} for further information concerning this condition and for examples. Very few examples of stable cones are known, but below we explain why it is an important and natural assumption; for similar reasons it also appears in \cite{joyce:II}, \cite{pacini:sldefs}, \cite{behrndt}. 

\begin{definition}\label{def:stability}
Let $\mathcal{C}$ be a SL cone in $\C^m$. Let $(\Sigma,g')$ denote the link of
$\mathcal{C}$ with the induced metric. Assume $\mathcal{C}$ has a unique
singularity at the origin; equivalently, assume that $\Sigma$ is smooth and that
it is not a sphere $\Sph^{m-1}\subset \Sph^{2m-1}$. It is known, cf. \cite{pacini:sldefs}, that the standard action of $\sunitary
m\ltimes\C^m$ on $\C^m$ admits a ``moment map'' $\mu$ and that the components of
$\mu$ restrict to harmonic functions on $\mathcal{C}$. Let $G$ denote the
subgroup of $\sunitary m$ which preserves $\mathcal{C}$. Then $\mu$ defines on
$\mathcal{C}$ $2m$ linearly independent harmonic functions of linear growth; in
the notation of Definition \ref{def:exceptionalweights} these functions are
contained in the space $V_\gamma$ with $\gamma=1$. The moment map also defines
on $\mathcal{C}$ $m^2-1-\mbox{dim}(G)$ linearly independent harmonic functions
of quadratic growth: these belong to the space $V_\gamma$ with $\gamma=2$.
Constant functions define a third space of homogeneous harmonic functions on
$\mathcal{C}$, \textit{i.e.} elements in $V_\gamma$ with $\gamma=0$. In
particular, these three values of $\gamma$ are always exceptional values for the
operator $\Delta_{\tilde{g}}$ on any SL cone, in the sense of Definition
\ref{def:exceptionalweights}.

We say that $\mathcal{C}$ is \textit{stable} if these are the only functions in
$V_\gamma$ for $\gamma=0,1,2$ and if there are no other exceptional values
$\gamma$ in the interval $[0,2]$. More generally, let $L$ be a CS or CS/AC SL
submanifold. We say that a singularity $x_i$ of $L$ is \textit{stable} if the
corresponding cone $\mathcal{C}_i$ is stable.
\end{definition}

Consider the map $F_{\bt}$ defined as in Equation \ref{eq:tFsobolev}. Notice that, depending on the convergence rates of $(L_{\bt},\iota_{\bt})$, it is possible that the CS ends satisfy the assumptions of case (1) of Lemma \ref{lemma:extra} while the AC ends satisfy case (2). This discrepancy is not a problem because Lemma \ref{lemma:extra} is essentially local: it examines each end separately. As in Proposition \ref{prop:Q_estimates} (1), Assumptions A1-A4 would suffice to obtain injectivity of the linearization $P_{\bt}$. As in Corollary \ref{cor:laplaceresults}, however, this choice of weights does not lead to surjectivity. To get surjectivity it is necessary to add extra variables into the domain, thus enlarging the image. This motivates us to consider the map $\tilde{F}_{\bt}$ defined as in Equation \ref{eq:cstFsobolev} and its linearization $\tilde{P}_{\bt}$. The stability assumption will ensure that this introduces exactly the right number of new variables to ensure surjectivity of the linearized operator but still maintain injectivity. 
\begin{lemma}\label{l:norms_equivalent}
 Let $(E,\|\cdot\|)$ be a Banach space. Assume given a finite number of closed subspaces $E_1,\dots,E_n$ such that $E=\oplus E_i$. Let $\|\cdot\|_i$ denote the norm on $E_i$ induced by restricting $\|\cdot\|$. Then the norm $\|\cdot\|$ on $E$ is equivalent to the norm $\sum_{i=1}^n\|\cdot\|_i$ on $\oplus E_i$.
\end{lemma}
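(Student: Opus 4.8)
The statement is a standard consequence of the closed graph / open mapping circle of ideas, and the plan is to reduce it to the bounded inverse theorem. Let $\pi_i\colon E\to E_i$ denote the linear projections determined by the direct sum decomposition $E=\oplus E_i$ (each $x\in E$ has a unique expression $x=\sum_i \pi_i(x)$ with $\pi_i(x)\in E_i$). Define a second norm on $E$ by
\begin{equation*}
\|x\|' := \sum_{i=1}^n \|\pi_i(x)\|_i = \sum_{i=1}^n \|\pi_i(x)\|.
\end{equation*}
One inequality is free: by the triangle inequality for $\|\cdot\|$,
\begin{equation*}
\|x\| = \Big\|\sum_{i=1}^n \pi_i(x)\Big\| \le \sum_{i=1}^n \|\pi_i(x)\| = \|x\|',
\end{equation*}
so the identity map $\mathrm{Id}\colon (E,\|\cdot\|')\to (E,\|\cdot\|)$ is continuous (with norm $\le 1$). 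It remains to produce a constant $C>0$ with $\|x\|'\le C\|x\|$ for all $x$, i.e. to show this continuous linear bijection has continuous inverse.

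\textbf{Key step: $(E,\|\cdot\|')$ is complete.} Here is where I would spend the real effort, though it is routine. Let $(x^{(k)})$ be Cauchy in $\|\cdot\|'$. Since $\|\pi_i(x^{(k)}-x^{(l)})\|\le \|x^{(k)}-x^{(l)}\|'$, each sequence $(\pi_i(x^{(k)}))_k$ is Cauchy in $(E_i,\|\cdot\|)$; as $E_i$ is a closed subspace of the Banach space $(E,\|\cdot\|)$, it is complete, so $\pi_i(x^{(k)})\to y_i$ in $\|\cdot\|$ for some $y_i\in E_i$. Put $x:=\sum_{i=1}^n y_i\in E$. Uniqueness of the decomposition gives $\pi_i(x)=y_i$, and therefore
\begin{equation*}
\|x^{(k)}-x\|' = \sum_{i=1}^n \|\pi_i(x^{(k)})-y_i\| \longrightarrow 0
\end{equation*}
since this is a finite sum of terms each tending to $0$. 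Hence $(E,\|\cdot\|')$ is a Banach space.

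\textbf{Conclusion.} Now $\mathrm{Id}\colon (E,\|\cdot\|')\to (E,\|\cdot\|)$ is a continuous linear bijection between Banach spaces, so by the bounded inverse theorem its inverse is continuous: there is $C>0$ with $\|x\|'\le C\|x\|$ for all $x\in E$. Combined with $\|x\|\le\|x\|'$ this gives the asserted equivalence of norms. The main (and only) obstacle is that one must invoke completeness somewhere — either, as above, completeness of $(E,\|\cdot\|')$ feeding the bounded inverse theorem, or equivalently one shows directly via the closed graph theorem that each $\pi_i$ is bounded on $(E,\|\cdot\|)$ (if $x^{(k)}\to x$ and $\pi_i(x^{(k)})\to z$, then $z\in E_i$ and $x-z=\lim_k(x^{(k)}-\pi_i(x^{(k)}))\in \bigoplus_{j\neq i}E_j$, which is closed, so $\pi_i(x)=z$); either route is short and elementary once the decomposition $E=\oplus E_i$ with all summands closed is in hand.
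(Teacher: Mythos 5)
Your proof is correct and follows essentially the same route as the paper: the paper likewise observes that the triangle inequality makes the identity map $(\oplus E_i,\sum\|\cdot\|_i)\to(E,\|\cdot\|)$ continuous and then invokes the Open Mapping Theorem, leaving implicit the completeness of the sum-norm space that you spell out via the closedness of each $E_i$.
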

\begin{proof}
 The triangle inequality shows that the identity map $Id:\oplus E_i\rightarrow E$ is a continuous isomorphism with respect to the given norms. Applying the Open Mapping Theorem, it follows that it is a topological isomorphism.
\end{proof}

We can now state and prove our second result concerning SL connect sums.

\begin{theorem}\label{thm:conifold_gluing}
Let $(L,\iota)$, $(\hat{L},\hat{\iota})$ be compatible marked SL conifolds. Assume that all CS singularities of the connect sum $(L_{\boldsymbol{t}},\iota_{\bt})$ are stable. Let $\mathcal{C}_i$ denote the cones corresponding to the AC ends. Choose constants $p$, $\tau$, $\alpha$ and weights $\boldsymbol{\beta}$, $\hat{\boldsymbol{\beta}}$ satisfying Assumptions A1-A4 above. 

Then for all $\bt>0$ in the circular sectors defined in Proposition \ref{prop:tFproperties}
there exists $(\tilde{e}_{\bt},v_{\bt},f_{\bt})\in \tilde{B}_{t^{\alpha}}\cap \left(\tilde{\E}_{\bt}\times E_0\times C^\infty_{\boldsymbol{\beta}_{\bt}}(L_{\bt})\right)$ such that $\iota_{\bt}':=\Phi_{L_{\bt}}^{\bt\cdot\tilde{e}_{\bt}\cdot\bt^{-1}}\circ (dv_{\bt}+df_{\bt}):L_{\bt}\rightarrow \C^m$ is a SL conifold whose AC ends are asymptotic to the same cones $\mathcal{C}_i$.
\end{theorem}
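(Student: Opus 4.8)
The plan is to repeat the strategy of Theorem \ref{thm:ACSLgluing}, but now with the extended map $\tilde{F}_{\bt}$ of Equation \ref{eq:cstFsobolev} in place of $F_{\bt}$. The reason the extension is essential is that, with the weights of Assumption A4, the linearized operator $P_{\bt}$ is, by Corollary \ref{cor:laplaceresults} and Proposition \ref{prop:tFproperties}, injective but \emph{not} surjective: its cokernel has dimension determined by the exceptional weights of $\Delta$ in the interval $(0,2]$ along the CS ends in $\hat{S}^{**}\cup S^{**}$. Adding to the domain the finite-dimensional space $\tilde{\E}_{\bt}\times E_0$ — which records translations and rotations of the remaining singularities and the interpolating functions $v_i$ — enlarges the image enough to kill this cokernel; this is precisely the form of the linearization $\tilde{P}_{\bt}$ given in Proposition \ref{prop:tFproperties}(3).

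The key step will be to show that, for all $\bt$ in the circular sectors of Proposition \ref{prop:tFproperties},
\begin{equation*}
\tilde{P}_{\bt}:T_e\tilde{\E}_{\bt}\oplus E_0\oplus W^p_{3,\boldsymbol{\beta}_{\bt}}(L_{\bt})\rightarrow W^p_{1,\boldsymbol{\beta}_{\bt}-2}(L_{\bt})
\end{equation*}
is an isomorphism, uniformly in $\bt$ (norm of the inverse bounded independently of $\bt$). I would first handle $L$ and $\hat{L}$ separately. Here the stability hypothesis enters decisively: by Definition \ref{def:stability} and Corollary \ref{cor:laplaceresults}, on each remaining cone the only homogeneous harmonic functions of degrees $0,1,2$ are constants and the components of the moment map of $\sunitary m\ltimes\C^m$, and there are no further exceptional weights in $[0,2]$. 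A dimension count then matches, on each such CS end, the cokernel of $P$ with the corresponding factor of $\tilde{\E}$ — whose image under the injective map $\chi$ consists exactly of the degree $1$ and degree $2$ homogeneous harmonic functions near the singularity — together with the one-dimensional factor of $E_0$ supplying the degree $0$ constant; this is the deformation theory of stable CS/AC SL conifolds of \cite{joyce:I}, \cite{pacini:sldefsextended}, and it shows that the extended maps on $L$ and on $\hat{L}$ are isomorphisms. Uniform invertibility of $\tilde{P}_{\bt}$ then follows by the cut-off argument already used in Theorem \ref{thm:sum_injective}: write $f=\eta_tf+(1-\eta_t)f$, estimate each piece on $L$ respectively $\hat{L}$ and rescale, noting that the norms on the finite-dimensional factors attached to $\hat{L}$ were rescaled as in Equation \ref{eq:tnorms} precisely so that the rescaling powers of $t_i$ cancel, and absorb the small error term of order $(1/|\log t|)$.

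Granting this, the argument closes just as in Theorem \ref{thm:ACSLgluing}. Since $\tilde{F}_{\bt}(e,0,0)=F_{\bt}(0)$ vanishes away from the neck regions, the computation of Proposition \ref{prop:Q_estimates}(2) — being local to those regions — still gives $\|\tilde{F}_{\bt}(e,0,0)\|_{W^p_{1,\boldsymbol{\beta}_{\bt}-2}}<t^\alpha/2$. Noting that
\begin{equation*}
\tilde{F}_{\bt}(\tilde{e},v,f)=0 \Leftrightarrow \tilde{G}_{\bt}(\tilde{e},v,f):=\tilde{P}_{\bt}^{-1}\bigl(-\tilde{F}_{\bt}(e,0,0)-\tilde{Q}_{\bt}(\tilde{e},v,f)\bigr)=(\tilde{e},v,f),
\end{equation*}
I would use Lemma \ref{l:norms_equivalent} to work with the sum norm on $\tilde{\E}_{\bt}\oplus E_0\oplus W^p_{3,\boldsymbol{\beta}_{\bt}}(L_{\bt})$, and then Proposition \ref{prop:tFproperties}(4) — together with $\tilde{Q}_{\bt}(e,0,0)=0$ and the inequality $\alpha+\min\{\beta_i-2\}>0$ from Assumption A3 — to verify that for $\bt$ small enough $\tilde{G}_{\bt}$ maps the ball $\tilde{B}_{t^\alpha}$ into itself and is a contraction on it. The Banach fixed point theorem then yields $(\tilde{e}_{\bt},v_{\bt},f_{\bt})\in\tilde{B}_{t^\alpha}$ with $\tilde{F}_{\bt}(\tilde{e}_{\bt},v_{\bt},f_{\bt})=0$, so that $\iota'_{\bt}:=\Phi_{L_{\bt}}^{\bt\cdot\tilde{e}_{\bt}\cdot\bt^{-1}}\circ(dv_{\bt}+df_{\bt})$ is SL. Finally, Joyce's regularity results \cite{joyce:I} give $f_{\bt}\in C^\infty_{\boldsymbol{\beta}_{\bt}}(L_{\bt})$, and since $\boldsymbol{\beta}_{\bt}<0$ on the AC ends the perturbation $dv_{\bt}+df_{\bt}$ is of lower order there, so the AC ends of $\iota'_{\bt}$ remain asymptotic to the cones $\mathcal{C}_i$, exactly as in Remark \ref{rem:convergence_rate}.

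The step I expect to be the main obstacle is the first one: proving that $\tilde{P}_{\bt}$ is a \emph{uniform} isomorphism. Injectivity and the dimension bookkeeping amount to the known deformation theory of stable CS/AC SL conifolds, but obtaining constants independent of $\bt$ requires carefully interleaving the connect-sum estimates of \cite{pacini:ics} with the rescaled norms of Equation \ref{eq:tnorms} on the extra finite-dimensional variables, so that no residual power of $\bt$ survives in the bound.
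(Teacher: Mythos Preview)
Your proposal is correct and follows essentially the same route as the paper's proof: reduce to uniform invertibility of $\tilde{P}_{\bt}$ (equivalently $\tilde{\Delta}_{g_{\bt}}$), use stability for the dimension count matching $\mbox{dim}(T_e\tilde{\E}_{\bt}\oplus E_0)$ to the cokernel of $\Delta_{g_{\bt}}$, obtain the uniform bound by the cut-off argument of Theorem \ref{thm:sum_injective} with the rescaled norms of Equation \ref{eq:tnorms}, and close via the contraction map $\tilde{G}_{\bt}$ exactly as in Theorem \ref{thm:ACSLgluing}. The paper adds two technical refinements you should be aware of: it makes a specific convenient choice for the underlying norm on $T_e\tilde{\E}_{\bt}$, namely $\|\tilde{e}\|:=\sum\|\Delta\,\chi(\tilde{e}_i)\|_{W^p_{1,\boldsymbol{\beta}-2}}$, so that uniform invertibility on that factor holds by definition; and it explicitly flags (via a footnote example) that uniform invertibility on each summand does not automatically give it on the direct sum, which is why the cut-off estimate must be carried out on the full triple $(\tilde{e},v,f)$ at once rather than piece by piece.
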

\begin{proof}
The first claim is that, for each $\bt$, the linearization 
\begin{equation*}
\tilde{P}_{\bt}:=d\tilde{F}_{\bt}[0]:T_{e}\tilde{\mathcal{E}}_{\bt}\oplus E_0\oplus W^p_{3,\boldsymbol{\beta}_{\bt}}(L_{\bt})\rightarrow W^p_{1,\boldsymbol{\beta}_{\bt}-2}(L_{\bt})
\end{equation*}
of $\tilde{F}_{\bt}$ is an isomorphism and is uniformly invertible. As in Proposition \ref{prop:Q_estimates} (1), uniform invertibility of $\tilde{P}_{\bt}$ is equivalent to uniform invertibility of the operator
\begin{equation}
 \tilde{\Delta}_{g_{\bt}}:T_{e}\tilde{\E}_{\bt}\oplus E_0\oplus W^p_{3,\boldsymbol{\beta}_{\bt}}\rightarrow W^p_{1,\boldsymbol{\beta}_{\bt}-2},\ \ (\tilde{e},v,f)\mapsto \Delta_{g_{\bt}}(\chi_{\bt}(\tilde{e})+v+f).
\end{equation}
Corollary \ref{cor:laplaceresults} shows that, for fixed $\bt$ and when restricted to $W^p_{3,{\boldsymbol{\beta}_{\bt}}}$,  $\tilde{\Delta}_{g_{\bt}}$ is injective with cokernel of dimension 
\begin{equation*}
\mbox{dim(Coker$(\tilde{\Delta}_{g_{\bt}})$)}=d, \ \
\mbox{where } d:=\sum_{i=1}^s \left(1+2m+m^2-1-\mbox{dim}(G_i)\right).
\end{equation*}
It is also true, cf. \cite{pacini:sldefs} Theorem 5.3 for details, that $\tilde{\Delta}_{g_{\bt}}$ is injective on its full domain.
The idea is that the function $\chi_{\bt}(\tilde{e})+v+f$ lives in a Sobolev space of the form $W^p_{k,(\boldsymbol{\mu},\boldsymbol{\lambda})}$ with $\boldsymbol{\mu}=-\boldsymbol{\epsilon}$ and $\boldsymbol{\lambda}<0$. According to Corollary \ref{cor:laplaceresults}, on this space the Laplace operator is injective. This means that if $\tilde{\Delta}_{g_{\bt}}(\tilde{e},v,f)=0$ then $\chi_{\bt}(\tilde{e})+v+f=0$ so the infinitesimal Lagrangian deformation $d(\chi_{\bt}(\tilde{e})+v+f)$ is trivial. From here we can deduce that $\tilde{e}=e$, $v=0$ and $f=0$.

 Our definitions imply that $\mbox{dim}(\tilde{\mathcal{E}}_{\bt}+E_0)=d$. It follows that, on the full domain, the cokernel of $\tilde{\Delta}_{g_{\bt}}$ is empty so $\tilde{\Delta}_{g_{\bt}}$ is an isomorphism.

We now want to show that $\tilde{\Delta}_{g_{\bt}}$ is uniformly invertible. To this end it is convenient to make an explicit choice of the norm $\|\cdot\|_{\bt}$ on $T_{e}\tilde{\E}_{\bt}$. Notice that any $\tilde{e}\in T_{e}\tilde{\E}_{\bt}$, thought of as a vector field on $\C^m$, can be written as a sum of vector fields
$\tilde{e}=\tilde{e}_1+\dots+\tilde{e}_{\hat{s}}+\tilde{e}_1+\dots+\tilde{e}_s$, each with compact support in a neighbourhood of a singularity.
 We thus set
\begin{equation*}
 \|\tilde{e}\|:=\sum_{i=1}^{\hat{s}}\|\Delta_{\hat{g}}\,\chi(\tilde{e}_i)\|_
{W^p_{1,\hat{\boldsymbol{\beta}}-2}(\hat{g})}+\sum_{i=1}^s\|\Delta_g\,\chi(\tilde{e}_i)\|_{W^p_{1,\boldsymbol{\beta}-2}(g)}.
\end{equation*}
We then define $\|\tilde{e}\|_{\bt}$ as in Equation \ref{eq:tnorms}. More explicitly, given $\tilde{e}_i$ in a neighbourhood of a singularity in $\hat{S}^{**}$ as above, we find
\begin{equation*}\label{eq:tnorm_on_hatL}
\begin{split}
 \|\tilde{e}_i\|_{\bt}&=t_i^{2-\beta_i}\|\Delta_{\hat{g}}\,\chi(\tilde{e}_i)\|_{W^p_{1,\hat{\boldsymbol{\beta}}-2}}=t_i^{-\beta_i}\|\Delta_{\hat{g}}\,t_i^2\chi(\tilde{e}_i)\|_{W^p_{1,\hat{\boldsymbol{\beta}}-2}}\\
&=t_i^{2-\beta_i}\|\Delta_{t_i^2\hat{g}}\,\chi_{\bt}(\tilde{e}_i)\|_{W^p_{1,\hat{\boldsymbol{\beta}}-2}}=\|\Delta_{g_{\bt}}\,\chi_{\bt}(\tilde{e}_i)\|_{W^p_{1,\boldsymbol{\beta}_{\bt}-2}}.\\
\end{split}
\end{equation*}
Given instead $\tilde{e}_i$ in a neighbourhood of a singularity in $S^{**}$ as above, we find
\begin{equation*}\label{eq:tnorm_on_L}
 \|\tilde{e}_i\|_{\bt}=\|\Delta_g\,\chi(\tilde{e}_i)\|_{W^p_{1,\boldsymbol{\beta}-2}}=\|\Delta_{g_{\bt}}\,\chi_{\bt}(\tilde{e}_i)\|_{W^p_{1,\boldsymbol{\beta}_{\bt}-2}}.
\end{equation*}
Basically, we have chosen a norm on $T_{e}\tilde{\E}_{\bt}$ so that, restricted to this space, $\tilde{\Delta}_{g_{\bt}}$ is uniformly invertible by definition.

It follows from the definition of $E_0$ that any $v\in E_0$ has a unique decomposition $v=a_1 v_1+\dots+a_{\hat{s}} v_{\hat{s}}+b_1v_1+\cdots+b_sv_s$. Injectivity means that the map $\tilde{\Delta}_{g_{\bt}}:E_0\rightarrow W^p_{1,\boldsymbol{\beta}_{\bt}-2}$ is an isomorphism on its image. Since $E_0$ is finite-dimensional, it is a topological isomorphism. We conclude that, for appropriate $C>0$,
\begin{equation*}
\begin{split}
 \|\sum_{i=1}^sb_iv_i\|_{\bt}&=\|\sum_{i=1}^sb_i\,dv_i\|_{W^p_{2,\boldsymbol{\beta}_{\bt}-1}(g_{\bt})}=\|\sum_{i=1}^sb_i\,dv_i\|_{W^p_{2,\boldsymbol{\beta}-1}(g)}\\
&\leq C\|\sum_{i=1}^sb_i\,\Delta_g(v_i)\|_{W^p_{1,\boldsymbol{\beta}-2}(g)}=C\|\sum_{i=1}^sb_i\,\Delta_{g_{\bt}}(v_i)\|_{W^p_{1,\boldsymbol{\beta}_{\bt}-2}(g_{\bt})}.\\
\end{split}
\end{equation*}
 Likewise, (omitting for simplicity the subscripts of $t$ and $\beta$),
\begin{equation*}
\begin{split}
 \|\sum_{i=1}^{\hat{s}}a_iv_i\|_{\bt}&=\|\sum_{i=1}^{\hat{s}}a_i\,dv_i\|_{W^p_{2,\boldsymbol{\beta}_{\bt}-1}(g_{\bt})}=\|\sum_{i=1}^{\hat{s}}a_i\,dv_i\|_{W^p_{2,\hat{\boldsymbol{\beta}}-1}(t^2\hat{g})}=t^{-1}t^{1-\beta}\|\sum_{i=1}^{\hat{s}}a_i\,dv_i\|_{W^p_{2,\hat{\boldsymbol{\beta}}-1}(\hat{g})}\\
&\leq Ct^{-\beta}\|\sum_{i=1}^{\hat{s}}a_i\,\Delta_{\hat{g}}(v_i)\|_{W^p_{1,\hat{\boldsymbol{\beta}}-2}(\hat{g})}=Ct^{2-\beta}\|\sum_{i=1}^{\hat{s}}a_i\,\Delta_{t^2\hat{g}}(v_i)\|_{W^p_{1,\hat{\boldsymbol{\beta}}-2}(\hat{g})}\\
&=C\|\sum_{i=1}^{\hat{s}}a_i\,\Delta_{g_{\bt}}(v_i)\|_{W^p_{1,\boldsymbol{\beta}_{\bt}-2}(g_{\bt})}.\\
\end{split}
\end{equation*}
This shows that $\tilde{\Delta}_{g_{\bt}}$ is uniformly invertible on $E_0$.
We also know from Theorem \ref{thm:sum_injective} that $\tilde{\Delta}_{g_{\bt}}$ is uniformly invertible on $W^p_{3,\boldsymbol{\beta}_{\bt}}$, but we still have to argue that it is uniformly invertible on the three spaces together, \textit{i.e.} on its full domain. \footnote{To understand this issue the reader may want to consider the linear map 
\begin{equation*}
 A_t:\R^2\rightarrow\R^2, \ \ A_t\cdot(x,y):=(x+y,ty).
\end{equation*}
This map is uniformly injective on the subspaces $E_1:=\mbox{span}\{(1,0)\}$, $E_2:=\mbox{span}\{(0,1)\}$ but it is clearly not uniformly invertible as $t\rightarrow 0$.}

Choosing an appropriate cut-off function $\eta_t$ as in the proof of Theorem \ref{thm:sum_injective}, we find
\begin{align}
\|\sum_{i=1}^s\tilde{e}_i\|_{\bt}+&\|\sum_{i=1}^{\hat{s}}\tilde{e}_i\|_{\bt}+\|\sum_{i=1}^sb_iv_i\|_{\bt}+ \|\sum_{i=1}^{\hat{s}}a_iv_i\|_{\bt}+\|f\|_{W^p_{3,\boldsymbol{\beta}_{\bt}}(g_{\bt})}\label{eq:first}\\
&\leq\|\sum_{i=1}^s\tilde{e}_i\|_{\bt}+\|\sum_{i=1}^sb_iv_i\|_{\bt}+\|\eta_tf\|_{W^p_{3,\boldsymbol{\beta}_{\bt}}(g_{\bt})}\label{eq:second}\\
&\ \ +\|\sum_{i=1}^{\hat{s}}\tilde{e}_i\|_{\bt}+\|\sum_{i=1}^{\hat{s}}a_iv_i\|_{\bt}+\|(1-\eta_t)f\|_{W^p_{3,\boldsymbol{\beta}_{\bt}}(g_{\bt})}\label{eq:third}.
\end{align}
We now estimate the two quantities (\ref{eq:second}), (\ref{eq:third}) separately.
\begin{align}
  %\|\sum_{i=1}^s\tilde{e}_i\|_{\bt}+\|\sum_{i=1}^sb_iv_i\|_{\bt}+\|\eta_tf\|_{W^p_{3,\boldsymbol{\beta}_{\bt}}(g_{\bt})}
(\ref{eq:second})
&\leq\|\sum_{i=1}^s\Delta_g\chi(\tilde{e}_i)\|_{W^p_{1,\boldsymbol{\beta}-2}(g)}+C\|\sum_{i=1}^s\Delta_g(b_iv_i)\|_{W^p_{1,\boldsymbol{\beta}-2}(g)}+C\|\Delta_g(\eta_tf)\|_{W^p_{1,\boldsymbol{\beta}-2}(g)}\nonumber\\
&\leq\|\sum_{i=1}^s\eta_t\cdot\Delta_g\chi(\tilde{e}_i)\|+C\|\sum_{i=1}^s\eta_t\cdot \Delta_g(b_iv_i)\|+C\|\eta_t\cdot\Delta_gf\|+\frac{C}{|\log t|}\|f\|_{W^p_{3,\boldsymbol\beta_{\bt}}(g_{\bt})}\nonumber\\
&\simeq C\|\eta_t\cdot\Delta_g\big(\sum_{i=1}^s\chi(\tilde{e}_i)+\sum_{i=1}^sb_iv_i+f\big)\|_{W^p_{1,\boldsymbol{\beta}-2}(g)}+\frac{C}{|\log t|}\|f\|_{W^p_{3,\boldsymbol\beta_{\bt}}(g_{\bt})}\nonumber\\
&=C\|\eta_t\cdot\Delta_{g_{\bt}}\big(\sum_{i=1}^s\chi_{\bt}(\tilde{e}_i)+\sum_{i=1}^sb_iv_i+f\big)\|_{W^p_{1,\boldsymbol{\beta}_{\bt}-2}(g_{\bt})}+\frac{C}{|\log t|}\|f\|_{W^p_{3,\boldsymbol\beta_{\bt}}(g_{\bt})}\nonumber\\
&=C\|\eta_t\cdot\Delta_{g_{\bt}}\big(\sum_{i=1}^s\chi_{\bt}(\tilde{e}_i)+\sum_{i=1}^sb_iv_i+\sum_{i=1}^{\hat{s}}\chi_{\bt}(\tilde{e}_i)+\sum_{i=1}^{\hat{s}}a_iv_i+f\big)\|+\frac{C}{|\log t|}\|f\|,\label{eq:fourth}
\end{align}
where the second and the last lines use our assumptions on the supports of $\eta_t$, $\tilde{e}_i$ and $v_i$, the $1/\log$ term arises as in the proof of Theorem \ref{thm:sum_injective} and $\simeq$ follows from Lemma \ref{l:norms_equivalent}. Likewise, 
\begin{align}
 % \|\sum_{i=1}^{\hat{s}}\tilde{e}_i\|_{\bt}+&\|\sum_{i=1}^{\hat{s}}a_iv_i\|_{\bt}+\|(1-\eta_t)f\|_{W^p_{3,\boldsymbol{\beta}_{\bt}}(g_{\bt})}
(\ref{eq:third})
&\leq t^{-\beta}\|\sum_{i=1}^{\hat{s}}\Delta_{\hat{g}}\,t_i^2\chi(\tilde{e}_i)\|+Ct^{-\beta}\|\sum_{i=1}^{\hat{s}}\Delta_{\hat{g}}(a_i v_i)\|+Ct^{-\beta}\|\Delta_{\hat{g}}((1-\eta_t)f)\|\nonumber\\
%&\leq Ct^{-\beta}\cdot\big(\|\sum_{i=1}^{\hat{s}}(1-\eta_t)\cdot\Delta_{\hat{g}}\,t_i^2\chi(\tilde{e}_i)\|+\|\sum_{i=1}^{\hat{s}}(1-\eta_t)\cdot\Delta_{\hat{g}}(a_i v_i)\|+\|(1-\eta_t)\cdot\Delta_{\hat{g}}f\|\big)+\frac{C}{|\log t|}\|f\|\\
&\leq Ct^{-\beta}\|(1-\eta_t)\cdot\Delta_{\hat{g}}\big(\sum_{i=1}^{\hat{s}}\chi_{\bt}(\tilde{e}_i)+\sum_{i=1}^{\hat{s}}a_iv_i+f\big)\|_{W^p_{1,\hat{\boldsymbol{\beta}}-2}(\hat{g})}+\frac{C}{|\log t|}\|f\|_{W^p_{3,\boldsymbol{\beta}_{\bt}}(g_{\bt})}\nonumber\\
&\simeq C\|(1-\eta_t)\cdot\Delta_{g_{\bt}}\big(\sum_{i=1}^{\hat{s}}\chi_{\bt}(\tilde{e}_i)+\sum_{i=1}^{\hat{s}}a_iv_i+f\big)\|_{W^p_{1,\boldsymbol{\beta}_{\bt}-2}(g_{\bt})}+\frac{C}{|\log t|}\|f\|_{W^p_{3,\boldsymbol{\beta}_{\bt}}(g_{\bt})}\nonumber\\
&= C\|(1-\eta_t)\cdot\Delta_{g_{\bt}}\big(\sum_{i=1}^s\chi_{\bt}(\tilde{e}_i)+\sum_{i=1}^sb_iv_i+\sum_{i=1}^{\hat{s}}\chi_{\bt}(\tilde{e}_i)+\sum_{i=1}^{\hat{s}}a_iv_i+f\big)\|+\frac{C}{|\log t|}\|f\|.\label{eq:fifth}
\end{align}
We conclude that
\begin{equation*}
(\ref{eq:first})\leq(\ref{eq:fourth})+(\ref{eq:fifth}).
\end{equation*}
Setting $h:=\Delta_{g_{\bt}}\big(\sum_{i=1}^s\chi_{\bt}(\tilde{e}_i)+\sum_{i=1}^{\hat{s}}\chi_{\bt}(\tilde{e}_i)+\sum_{i=1}^sb_iv_i+\sum_{i=1}^{\hat{s}}a_iv_i+f\big)$ and moving the $1/\log$ terms to the left hand side we obtain
\begin{equation*}
 (\ref{eq:first})\leq\|\eta_th\|_{W^p_{1,\boldsymbol{\beta}_{{\bt}-2}}(g_{\bt})}+\|(1-\eta_t)h\|_{W^p_{1,\boldsymbol{\beta}_{{\bt}-2}}(g_{\bt})}.
\end{equation*}
We now use once more estimates of the form $\|\eta_th\|\leq \|h\|+(1/|\log t|)\|h\|$ to show that there exist constants $C$, $C'$ such that, for any $h\in W^p_{1,\boldsymbol{\beta}_{{\bt}-2}}(g_{\bt})$, 
\begin{equation*}
 C\|h\|\leq\|\eta_th\|+\|(1-\eta_t)h\|\leq C'\|h\|.
\end{equation*}
We conclude that, for our $h$, $(\ref{eq:first})\leq C\|h\|$, thus proving that $\tilde{\Delta}_{g_{\bt}}$ is uniformly invertible.

As in Theorem \ref{thm:ACSLgluing} we now define a map 
\begin{equation*}
\tilde{G}_{\bt}: \tilde{B}_{t^\alpha}\subset \tilde{\mathcal{E}}_{\bt} \times E_0\times W^p_{3,\boldsymbol{\beta_{\bt}}}(L_{\bt})\rightarrow \tilde{\mathcal{E}}_{\bt} \times E_0\times W^p_{3,\boldsymbol{\beta_{\bt}}}(L_{\bt}).
\end{equation*}
Using the fact that $\tilde{F}_{\bt}(e,0,0)=F_{\bt}(0)$ we can check that $\tilde{G}_{\bt}$ is a contraction which maps $\tilde{B}_{t^\alpha}$ into itself. This suffices to prove the theorem.
\end{proof}

\begin{example}\label{ex:t_independentsings}
It is known that not all isolated conical singularities admit AC SL desingularizations, cf. \cite{haskinspacini}. Thus, in general, it will not be possible to replace all conical SL singularities with smooth compact regions, completely desingularizing the conifold (see however Example \ref{ex:SL_doubling}, which replaces each singularity with a non-compact AC SL end). It follows that, in general, the best one can do is to let $S^{*}$ denote the set of CS ends which do admit desingularizations and let $S^{**}$ contain the others. If the corresponding cones are stable, Theorem \ref{thm:conifold_gluing} allows us to perform gluing on this configuration.
\end{example}

\begin{example}\label{ex:SL_desing+sings}
 Let $\mathcal{C}$ be a stable SL cone. Choose a point $x\in\mathcal{C}$ and a SL plane $\Pi$ such that $T_x\mathcal{C}\cup\Pi$ satisfies Lawlor's angle condition. Let $L$ denote the disjoint union of $\mathcal{C}$ and $\Pi$ and let $\iota$ denote the natural immersion. Theorem \ref{thm:conifold_gluing} allows us to resolve the intersection $x$ using a Lawlor neck. The result will be a SL conifold with one CS singularity asymptotic to $\mathcal{C}$ and two AC ends: one asymptotic to $\mathcal{C}$, the other planar.

In a similar way one can resolve the intersection points of a configuration of several stable SL cones, rotated so as to meet Lawlor's angle conditions at each intersection point. 
\end{example}

\begin{example}\label{ex:t_dependentsings}
Assume given a conifold $L$ with a singularity modelled on a cone $\mathcal{C}$. Assume also that there exists a conifold $\hat{L}$ with one AC end modelled on $\mathcal{C}$ and one CS end modelled on a different cone $\mathcal{C}'$. If we manage to glue $\hat{L}$ into $L$ we will have found a procedure for replacing one singularity with another. In our framework this situation is described by allowing $\hat{S}^{**}$ to contain at least one CS end. If the corresponding cone is stable, Theorem \ref{thm:conifold_gluing} will allow us to perform gluing on this configuration. In particular, the position of the new singularity in $\C^m$ will be rescaled with $\bt$. 

Unfortunately, no such conifolds $\hat{L}$ are currently known. However, we can use the conifolds constructed in Example \ref{ex:SL_desing+sings} for a similar purpose. For example, let $\hat{L}$ be the SL conifold with one CS singularity and two AC ends described in Example \ref{ex:SL_desing+sings}. Now choose a new SL conifold $L$ and a smooth point $x\in L$. We can use the planar end in $\hat{L}$ to glue $\hat{L}$ onto $L$, in a neighbourhood of $x$. The resulting conifold inherits from $\hat{L}$ both the singularity and the other end.
\end{example}

\begin{remark}
 In Theorem \ref{thm:ACSLgluing} the limitation on $\bt$ imposed by $M$ prevents one $t_i$ from becoming zero before the others. Geometrically, it prevents one component of $\hat{L}$ from collapsing to a point before the others. Theorem \ref{thm:conifold_gluing} shows how to deal with singularities so it seems reasonable to think that Theorem \ref{thm:ACSLgluing} might be extended so as to remove this limitation, using the same techniques as above. In particular, it would probably be necessary to assume that the corresponding singularity is stable and to allow it to move in $\C^m$ in the usual manner. More generally, it seems that the analogous limitation could be removed also from Theorem \ref{thm:conifold_gluing}. 
\end{remark}

%%%%%%%%%%%%%%%%%%%%%%%%%%%%%%%%%%%%%%%%%%%%%%%%%%%%%%%%%%%%%
%%%%%%%%%%%%%%%%%%%%%%%%%%%%%%%%%%%%%%%%%%%%%%%%%%%%%%%%%%%%%

\ 

{\bf Acknowledgments.} I would like to thank Dominic Joyce for many useful conversations. This work started while I was a Marie Curie EIF Fellow at the University of Oxford and continued at the Scuola Normale Superiore of Pisa, partially supported by a Marie Curie ERG grant. I also thank M. Haskins, J. Lotay and L. Mazzieri for useful comments. 

\bibliographystyle{amsplain}
\bibliography{slgluing}
\end{document}